\newtheorem{thm}{Theorem}[section]
\newtheorem{lem}[thm]{Lemma}
\newtheorem{prop}[thm]{Proposition}
\newtheorem{conj}[thm]{Conjecture}
\newtheorem{claim}{Claim}
\newenvironment{customthm}[1]
  {\innercustomthm}
  {\endinnercustomthm}
\theoremstyle{definition}
\newtheorem{defn}[thm]{Definition}
\theoremstyle{remark}
\newtheorem{rem}[thm]{Remark}
\newtheorem{exam}[thm]{Example}
\numberwithin{equation}{section}
\theoremstyle{plain}
\newcommand{\spin}{\ifmmode{\rm Spin}\else{${\rm spin}$\ }\fi}
\newcommand{\spinc}{\ifmmode{{\rm Spin}^c}\else{${\rm spin}^c$}\fi}
\newcommand{\spincs}{\mathfrak s}
\newcommand{\hfhat}{\widehat{HF}}
\newcommand{\hfp}{HF^+}
\newcommand{\hfred}{HF_{\rm red}}
\newcommand{\hfkhat}{\widehat{HFK}}
\newcommand{\ared}[1]{\bm{A}_{{\rm red},\, #1}}
\newcommand{\aredp}[1]{\bm{A}'_{{\rm red},\, #1}}
\newcommand{\tp}{\mathcal{T}^+}
\newcommand{\calT}{\mathcal{T}}
\newcommand{\Xipq}{\mathbb{X}^+_{i,p/q}}
\newcommand{\Abold}{\bm{A}^+}
\newcommand{\Bbold}{\bm{B}^+}
\newcommand{\AT}{\bm{A}^T}
\newcommand{\vbold}{\bm{v}}
\newcommand{\hbold}{\bm{h}}
\newcommand{\Dbold}{\bm{D}_{i,p/q}^+}
\newcommand{\Ap}{\mathbb{A}^+}
\newcommand{\Bp}{\mathbb{B}^+}
\newcommand{\Z}{\mathbb{Z}}
\newcommand{\F}{\mathbb{F}}
\newcommand{\Q}{\mathbb{Q}}
\newcommand{\Zp}{\mathbb{Z}/p\mathbb{Z}}
\newcommand{\spliff}{\textup{Spli$\mathbb{F}$F}}
\DeclareMathOperator{\vol}{vol}
\newcommand{\floorfrac}[2]{\left\lfloor \frac{#1}{#2}\right\rfloor}
\newcommand{\ceilfrac}[2]{\left\lceil \frac{#1}{#2}\right\rceil}
\begin{document}
\title{Non-integer characterizing slopes and knot Floer homology}
\author{Duncan McCoy}
\address {Département de mathématiques\\
Université du Québec à Montréal\\
Canada}
\email{mc\_coy.duncan@uqam.ca}
\date{}
\begin{abstract}
Conjecturally, a knot in the 3-sphere has only finitely many non-integer non-characterizing slopes. We verify this conjecture for all knots with knot Floer homology satisfying certain simplicity conditions. The class of knots satisfying our notion of simplicity includes alternating knots, $L$-space knots and the vast majority of knots with at most 12 crossings. For arbitrary knots in the 3-sphere we show that almost all slopes $p/q$ with $|q|\geq 3$ are characterizing. In addition, we show that all $L$-space knots and almost $L$-space knots have infinitely many integer characterizing slopes.
\end{abstract}

\maketitle

\section{Introduction}
Given a knot $K$ in $S^3$, a rational number $p/q\in \Q$ is a {\em characterizing slope} for $K$, if the existence of an orientation preserving homeomorphism between $S_K^3(p/q)$ and $S_{K'}^3(p/q)$ implies that $K$ and $K'$ are isotopic. That is, if the oriented homeomorphism type of $S_K^3(p/q)$ distinguishes $K$ amongst all knots in the 3-sphere.

In general determining the exact set of characterizing slopes for a given knot is challenging and there are very few examples where this has been done. Every slope is a characterizing slope for the unknot \cite{Culler1987Cyclic, Gabai1987foliationsIII, Kronheimer2007lensspacsurgeries}, the trefoil and the figure-eight knot \cite{Ozsvath2019trefoil_char}. More recently, Sorya has exhibited an infinite class of knots for which the characterizing slopes are precisely the non-integer slopes \cite{Sorya2023satellite}. Other knots whose characterizing slopes have been studied in detail include the torus knot $T_{5,2}$ \cite{Ni2014characterizing, Ni2021torus_char_II}, $5_2$ \cite{Baldwin52}, the $(-2,3,7)$-pretzel knot \cite{McCoy2022pretzel} and various hyperbolic knots and Whitehead doubles \cite{Wakelin2023Whitehead}.

This article aims to understand the coarse structure of the set of characterizing slopes for an arbitrary knot. In particular, we are motivated by the conjecture that for any given knot almost all non-integer slopes are characterizing.

\begin{conj}\label{conj:main}
Let $K$ be a knot in $S^3$. Then there exists a constant $C=C(K)$ such that any slope $p/q$ satisfying $|q|\geq 2$ and $|p|+|q|\geq C$ is characterizing for $K$.
\end{conj}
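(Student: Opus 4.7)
The plan is to establish a constant $C=C(K)$ such that whenever $|q|\geq 2$, $|p|+|q|\geq C$, and there is an orientation-preserving homeomorphism $S_{K'}^3(p/q)\cong S_K^3(p/q)$, one must have $K'=K$. I would attack this in three layers: extract Heegaard Floer agreement from the identification of surgeries, upgrade this to a homeomorphism of knot exteriors, and finally reconstruct $K$ from its exterior.

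For the first layer, invoke the Ozsv\'ath-Szab\'o mapping cone formula, which assembles $HF^+(S_K^3(p/q))$ from the complexes $\bm{A}_i^+$ and $\bm{B}_i^+$ built from $CFK^\infty(K)$, one pair per spin$^c$ structure. Any orientation-preserving homeomorphism of the surgeries preserves the absolute $\Q$-graded $HF^+$ and the $d$-invariants across the affine set of spin$^c$ structures. For $|p|+|q|$ sufficiently large relative to $|q|$, I would argue that the mapping cone decomposes finely enough that the corresponding equality for $K'$ forces the reduced local invariants $\ared{i}(K')=\ared{i}(K)$ for all $i$, and consequently that the Alexander polynomial, Seifert genus, and fiberedness status of $K'$ match those of $K$.

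For the second layer, convert Floer agreement into geometric agreement. The JSJ decomposition of the common surgered manifold separates the filling solid torus from the essential pieces of $S^3\setminus K$, and cosmetic surgery results (after Ni-Wu and subsequent refinements) prevent distinct slopes on $K$ from yielding homeomorphic fillings once $|p|+|q|$ is sufficiently large. Matching the JSJ pieces on the two sides then produces a homeomorphism $S^3\setminus K\cong S^3\setminus K'$ that is compatible with meridians up to the homological constraints already pinned down in the first layer. The Gordon-Luecke theorem closes the argument in the hyperbolic case, while a descending induction on JSJ complexity together with companion-characterizing-slope arguments in the spirit of Sorya handles the satellite and Seifert-fibered cases.

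The main obstacle is the regime $|q|=2$. Here the mapping cone formula is at its least faithful as a recovery mechanism for $CFK^\infty(K)$ from the surgery data, and even were such a recovery available one faces the deeper fact that knot Floer homology does not determine a knot in general. Bridging this gap will likely require either a Floer-theoretic refinement strictly finer than $CFK^\infty$ (involutive or bordered invariants, perhaps) or a direct geometric argument exploiting the special role of denominator-two surgeries; it is precisely this obstruction that forces the paper's unconditional conclusion to the range $|q|\geq 3$, and overcoming it is what Conjecture~\ref{conj:main} ultimately demands.
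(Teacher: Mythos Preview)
This statement is a conjecture that the paper explicitly leaves open; there is no proof in the paper to compare against, and your proposal is likewise not a proof but a strategy sketch that, as you concede in your final paragraph, stalls at $|q|=2$. The gap is not a separate regime to be handled after your first two layers succeed; it is the failure of layer one itself. Your claim that for $|p|+|q|$ large the mapping cone forces $\ared{i}(K')\cong\ared{i}(K)$ is exactly Lemma~\ref{lem:ared_recovery}, which the paper proves for $q\geq 3$ unconditionally but for $q=2$ only under Property~\spliff{}. The obstruction is concrete: at $q=2$ each $\ared{k}$ is visible in only two \spinc-structures of the filling, so the grading-shift comparison of Lemma~\ref{lem:difference_calc} yields a single constraint, insufficient to exclude spurious contributions $\aredp{pt+j}$ with $t\neq 0$ unless one knows in advance that the $\F$-factors of $\ared{j}$ sit in at most one grading of each parity.

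Your second layer also misdescribes the paper's route. The paper does not use cosmetic surgery results, does not induct on JSJ complexity, and does not attempt to pass from Floer agreement to an exterior homeomorphism. It bifurcates on genus: when $g(K')>g(K)$ the Floer input (Theorem~\ref{thm:HFKbound}) forces $g(K')=g(K)$, a contradiction; when $g(K')\leq g(K)$, Theorem~\ref{thm:bounded_genus} bounds $|p|$ directly, with no Floer input at all, by combining a slope-length estimate (Proposition~\ref{prop:length_bound}) with Lackenby's rigidity result (Theorem~\ref{thm:Lackenby_magic}) in the hyperbolic-type case and an exceptional-fibre-order count in the cable and torus cases, finishing with Gordon--Luecke. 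The role you assign to Ni--Wu cosmetic surgery is played instead by the observation that the surgery core is recognisable inside the filled outermost JSJ piece either as a short geodesic or as the unique high-order exceptional fibre.
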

As there are knots with infinitely many integer non-characterizing slopes \cite{Baker2018Noncharacterizing}, this conjecture cannot be relaxed to omit the $|q|\geq 2$ condition.
Conjecture~\ref{conj:main} has already been verified for torus knots \cite{McCoy2020torus_char}, hyperbolic $L$-space knots \cite{McCoy2019hyperbolic_char} and composite knots, for which all non-integer slopes turn out to be characterizing \cite{Sorya2023satellite}. As further evidence for this conjecture, Lackenby has shown that for a hyperbolic knot $K$ a slope $p/q$ is characterizing whenever $q$ is sufficiently large \cite{Lackenby2019characterizing}. Moreover, effective bounds on the meaning of sufficiently large were derived by Wakelin \cite{Wakelin2023Whitehead}. Lackenby's result was generalized by Sorya, who showed that for any knot slopes with $q$ sufficiently large are characterizing \cite{Sorya2023satellite}.

The first main result of this paper is to show that for a given knot almost all slopes with $|q|\geq 3$ are characterizing.
\begin{restatable}{thm}{qgeqthree}
\label{thm:g_geq_3}
Let $K$ be a knot in $S^3$. Then there exists a constant $C=C(K)$ such that any slope $p/q$ satisfying $|q|\geq 3$ and $|p|+|q|\geq C$ is characterizing for $K$.
\end{restatable}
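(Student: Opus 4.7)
The plan is to exploit the Ozsv\'ath--Szab\'o mapping cone formula for Heegaard Floer homology to convert the hypothesis $S^3_K(p/q)\cong S^3_{K'}(p/q)$ into a matching of knot Floer invariants of $K$ and $K'$, and then to show that for $|q|\geq 3$ and $|p|$ large these invariants leave room for only finitely many non-characterizing slopes. The overall strategy is to reduce to Sorya's theorem that slopes with $|q|$ sufficiently large are already characterizing, so that one only has to handle each $q$ in a finite range $3\leq |q|<Q(K)$.

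For this reduction I would prove an extraction lemma: given a knot $K$ and a slope $p/q$ with $|p|$ much larger than $q$ and $g(K)$, one can recover from the \spinc-graded Heegaard Floer homology $\hfp(S^3_K(p/q),\spincs)$ the individual reduced complexes $\ared{i}(K)$ and the Ni--Wu-type numerical invariants $\vbold_i(K), \hbold_i(K)$ for every Alexander grading $|i|\leq g(K)$. The idea is that once $|p|$ is large each relevant \spinc-structure picks out a single Alexander grading in the window, and, after killing the $\tp$-tail, the quotient complex $\Xipq$ computing $\hfred(S^3_K(p/q),\spincs)$ reduces to a sparse piece built from $\ared{i}$ together with at most one further reduced piece glued by the maps $\vbold_i, \hbold_i$. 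When $|q|\geq 3$ these glued quotients are thin enough to recover the individual $\ared{i}$; for $|q|=2$ they mix three consecutive Alexander gradings in a way that prevents a clean recovery, which is why the bound cannot be improved in this part of the argument.

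Applying the extraction to both $K$ and $K'$ under the hypothesis $S^3_K(p/q)\cong S^3_{K'}(p/q)$ forces equality of the sequences $\rk\ared{i}$ and $\vbold_i, \hbold_i$ throughout the window, and hence, via graded Euler characteristics, agreement of the Alexander polynomials and of the Seifert genera $g(K)=g(K')$. Since only finitely many knots share a given collection of reduced complexes of bounded total rank, the set of possible $K'$ is finite for each fixed $q$; and for each such candidate $K'$ a comparison of the correction terms $d(S^3_K(p/q),\spincs)$ as $p$ varies shows that only finitely many $p$ can admit an actual homeomorphism. Summing over the finite set of relevant $q$'s and candidate $K'$'s yields the desired constant $C(K)$.

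The main obstacle I expect is the extraction lemma for small $|q|\geq 3$, where the relevant mapping cone quotients are genuinely two-term rather than one-term: disentangling the contribution of $\ared{i}$ from that of its $q$-shifted neighbor requires careful bookkeeping with the bidegrees of $\vbold_i, \hbold_i$ and with the $d$-invariant shifts induced by surgery. The rest of the proof is a finiteness packaging combining the output of this lemma with Sorya's large-$|q|$ characterizing theorem.
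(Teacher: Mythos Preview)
Your proposal has a genuine gap at the finiteness step. You assert that ``only finitely many knots share a given collection of reduced complexes of bounded total rank,'' and then that a comparison of $d$-invariants eliminates all but finitely many $p$ for each candidate $K'$. Both claims fail. There are infinitely many distinct knots with isomorphic knot Floer complexes $CFK^\infty$ (see Hedden--Watson on the botany of knot Floer homology; mutants already give easy examples), and hence with identical $\ared{k}$ and $V_k$ for all $k$. For any such $K'$ the Ni--Wu formula gives $d(S^3_{K'}(p/q),i)=d(S^3_{K}(p/q),i)$ for every $p/q$ and every $i$, so $d$-invariants impose no restriction on $p$ whatsoever. Your extraction lemma (which is essentially the paper's Lemma~\ref{lem:ared_recovery}) only tells you that $K'$ has the same genus, Alexander polynomial, and reduced pieces as $K$; it cannot by itself bound the set of $K'$ or the set of $p$.

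The paper closes this gap by a completely different, non-Floer argument. Once the Heegaard Floer input forces $g(K')\leq g(K)$, the paper invokes Theorem~\ref{thm:bounded_genus}, whose proof is purely geometric: it analyses how the JSJ decomposition of $S^3_{K'}$ behaves under Dehn filling, using hyperbolic cusp-length estimates (Agol, Futer--Kalfagianni--Purcell, the 6-theorem) and the uniqueness of Seifert fibrations to show that for $|p|/|q|$ large the filled outermost JSJ piece of $S^3_{K'}$ must match that of $S^3_K$, and then applies Lackenby-type rigidity and Gordon--Luecke to conclude $K'\simeq K$. This geometric component is essential and is entirely absent from your outline; Heegaard Floer alone cannot finish the argument.
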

This result was previously known for hyperbolic knots \cite{McCoy2019hyperbolic_char}. The main technical advance here is to show that it holds for satellite knots, which is to say, knots whose complements have a non-trivial JSJ decomposition.

Thus the veracity of Conjecture~\ref{conj:main} depends on the existence of half-integer characterizing slopes. Our second main result is to demonstrate the existence of such slopes for knots satisfying a technical knot Floer homology condition that we dub Property~\spliff{} (see Definition~\ref{def:spaff1} below). The reader should interpret Property~\spliff{} as saying that $K$ has relatively simple knot Floer homology. 
\begin{restatable}{thm}{spaffslopes}\label{thm:spaff_slopes}
Let $K$ be a knot in $S^3$ with Property~\spliff{}. Then, there exists a constant $C=C(K)\geq 0$ such that for any odd $p\geq C$, the slope $p/2$ is a characterizing slope for $K$.
\end{restatable}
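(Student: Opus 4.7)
My plan is to suppose that $S^3_K(p/2)$ is orientation-preservingly homeomorphic to $S^3_{K'}(p/2)$ for some knot $K'$ and some odd $p\geq C$, and to derive $K=K'$. The overall scheme has four parts: (i) extract graded Heegaard Floer data from the surgery; (ii) feed the homeomorphism through the Ozsv\'ath--Szab\'o rational surgery formula to constrain the knot Floer data of $K'$; (iii) use Property~\spliff{} to upgrade this into a genuine isomorphism of knot Floer complexes and to force $K'$ itself to have Property~\spliff{}; and (iv) combine this rigidity with the geometric arguments behind Theorem~\ref{thm:g_geq_3} to conclude $K=K'$.

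For (i), the oriented homeomorphism identifies the absolutely $\Q$-graded, $\spinc$-decomposed groups $\hfp(S^3_K(p/2))$ and $\hfp(S^3_{K'}(p/2))$. Standard $d$-invariant arguments, together with the detection of genus and fiberedness by $\hfkhat$, then force $K$ and $K'$ to have the same Seifert genus, fiberedness, $\tau$-invariant and Alexander polynomial; none of this uses Property~\spliff{}.

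Step (ii) is the technical heart. For $p$ odd and denominator $q=2$, the mapping cone formula computes $\hfp(S^3_K(p/2))$ $\spinc$-structure by $\spinc$-structure as the homology of a complex built from the subquotient complexes $\Abold_s$ and $\Bbold_s$ of the knot Floer complex, together with the inclusion-induced chain maps $\vbold_s$ and $\hbold_s$. Matching these $\spinc$-summands for $K$ and $K'$ across all $\spinc$ structures will yield isomorphisms of the reduced pieces $\ared{s}$ (and the analogous truncations of $\Bbold_s$), together with their induced maps, as graded $\F$-modules. The parity hypothesis ``$p$ odd'' is used here to control how the $\spinc$ labels are distributed among the $\Abold_s$ summands, separating their individual contributions.

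Step (iii) is where Property~\spliff{} enters, and I expect this to be the main obstacle. By design, Property~\spliff{} should be the combinatorial simplicity hypothesis under which the reduced mapping cone data determines the full knot Floer complex up to filtered isomorphism, so knots with this property have Floer complexes ``split'' enough that the surgery data pins them down uniquely. The delicate point is showing that $K'$ itself must satisfy Property~\spliff{} with matching reduced data, which is not a consequence of the Sorya--Lackenby large-$q$ results since $q=2$ is the borderline case they do not cover. Once the knot Floer complex of $K'$ is known to agree with that of $K$, step (iv) concludes by pinning down the JSJ and geometric type of the exterior of $K'$ from the Floer-theoretic data and the Alexander polynomial, then invoking the satellite and hyperbolic rigidity arguments (together with Theorem~\ref{thm:g_geq_3} applied to $K$) to force $K'=K$.
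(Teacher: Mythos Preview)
Your outline misidentifies where Property~\spliff{} enters and what it accomplishes, and this produces a genuine gap.

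In step~(i) you assert that genus, fiberedness and the Alexander polynomial of $K'$ are forced to match those of $K$ by ``standard $d$-invariant arguments'' without using Property~\spliff{}. For $q=2$ this is exactly what is \emph{not} known in general; indeed it is the reason the $q=2$ case is singled out. The $d$-invariants only recover the sequence $V_k$, hence $\nu^+$, not the genus. To pin down the genus one must recover the modules $\ared{k}$, and for $q=2$ the mapping cone expression for $\hfred(S^3_K(p/2),i)$ is a direct sum of several shifted copies of $\aredp{k}$ (for $K'$ of possibly much larger genus) which could in principle reassemble to mimic the $\ared{k}$ of $K$. Property~\spliff{} is precisely the hypothesis that rules out such a reassembly: if $\ared{k}(K)\not\cong\ared{k}(K')$ for some $k$, then an $\F$-factor of some $\aredp{pt+j}$ with $t\neq 0$ is forced to appear, and the two $\spinc$-structures $i=2j$ and $i=2j+1$ (which both compute $\ared{j}$) give this $\F$-factor two distinct gradings of the same parity inside $\ared{j}$, contradicting Property~\spliff{} for $K$. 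So Property~\spliff{} is used to obtain $g(K')=g(K)$, not afterwards.

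Correspondingly, step~(iii) is based on a false expectation. Property~\spliff{} does \emph{not} allow one to reconstruct the filtered chain homotopy type of $CFK^\infty$ from surgery, and the paper neither proves nor needs such a statement. Nor is it necessary to show that $K'$ has Property~\spliff{}. All that is extracted is $g(K')=g(K)$ (plus $\Delta_{K'}=\Delta_K$ and fiberedness, which are not used). Step~(iv) is then not a Floer-theoretic rigidity argument but a purely geometric one: once $g(K')\le g(K)$, a JSJ analysis of the complements (Theorem~\ref{thm:bounded_genus}) bounds $|p|$ in terms of $K$ alone, and together with Sorya's bound on $q$ and her result that composite knots have only characterizing non-integer slopes, this gives the constant $C(K)$. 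Your proposal omits the bounded-genus JSJ argument entirely, and without it there is no mechanism to conclude $K=K'$ from the Floer data you have.
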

Recall that there is a homeomorphism $S_K^3(-p/q)\cong -S_{mK}^3(p/q)$, where $mK$ denotes the mirror of $K$. Thus, a slope $-p/q$ is characterizing for $K$ if and only if $p/q$ is a characterizing slope for $mK$. Therefore, if $K$ is a knot such that both $K$ and $mK$ have Property~\spliff{}, then Theorem~\ref{thm:g_geq_3} and Theorem~\ref{thm:spaff_slopes} show that $K$ satisfies Conjecture~\ref{conj:main}. Using this, we establish Conjecture~\ref{conj:main} for several classes of knots, notably including alternating knots and $L$-space knots.  Furthermore, for the 2977 prime knots in the tables with at most 12 crossings, Conjecture~\ref{conj:main} holds for at least 2951 of them.
\begin{restatable}{cor}{interestingclasses}\label{cor:interesting_classes}
Let $K$ be a knot satisfying at least one of the following conditions:
\begin{enumerate}[(i)]
\item $K$ has thin knot Floer homology;
\item $K$ is an $L$-space knot; or
\item $K$ is a prime knot with at most 12 crossings and up to mirroring is not amongst the 26 knots listed in Table~\ref{table:awkward_examples}.
\end{enumerate}
Then there exists a constant $C=C(K)$ such that any slope $p/q$ satisfying $|q|\geq 2$ and $|p|+|q|\geq C$ is characterizing for $K$.
\end{restatable}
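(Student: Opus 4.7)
The strategy is to combine the two preceding theorems in the way already suggested in the paragraph above the corollary. Theorem~\ref{thm:g_geq_3} immediately supplies a constant $C_1(K)$ such that every slope $p/q$ with $|q|\geq 3$ and $|p|+|q|\geq C_1$ is characterizing, so the only slopes left to handle are the half-integer ones $p/2$ with $p$ odd. For positive half-integer slopes of $K$, Theorem~\ref{thm:spaff_slopes} applied to $K$ directly furnishes a constant $C_K$ covering all $p\geq C_K$, provided $K$ has Property~\spliff. For negative half-integer slopes, the mirror identity $S^3_K(-p/q)\cong -S^3_{mK}(p/q)$ converts the problem to positive half-integer slopes for $mK$, which Theorem~\ref{thm:spaff_slopes} handles as soon as $mK$ has Property~\spliff. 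Thus it suffices to show, in each of the three cases $(i)$--$(iii)$, that \emph{both} $K$ and $mK$ satisfy Property~\spliff.

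For case $(i)$, a knot has thin knot Floer homology if and only if its mirror does, since mirroring merely reflects the Maslov and Alexander gradings. So this case reduces to checking that any thin knot satisfies Definition~\ref{def:spaff1}; because all generators of a thin complex sit on two adjacent diagonals, the vertical and horizontal maps $\vbold$, $\hbold$ and the reduced summands $\ared{s}$ take a very constrained form and can be matched against the conditions of~\spliff{} by direct inspection. For case $(ii)$, the knot Floer complex of an $L$-space knot is a monotone staircase, and that of the mirror of an $L$-space knot is the dual staircase; one verifies~\spliff{} for each of these two explicit combinatorial models. For case $(iii)$, the knot Floer complexes of all 2977 prime knots of at most 12 crossings are known, so one performs a finite enumeration, verifying~\spliff{} for both $K$ and $mK$ and collecting the failures in Table~\ref{table:awkward_examples}.

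I expect the principal obstacle to lie in $(i)$ and $(ii)$: one needs a uniform way of reading off~\spliff{} from the staircase (and its dual) and from a thin complex, which requires unpacking Definition~\ref{def:spaff1} and identifying the maps $\vbold_s$, $\hbold_s$ and the pieces $\ared{s}$ in each model. Once these two structural verifications are in hand, case $(iii)$ is a finite computational check, and the combination with Theorems~\ref{thm:g_geq_3} and~\ref{thm:spaff_slopes} yields a single constant $C(K)$ rendering every slope $p/q$ with $|q|\geq 2$ and $|p|+|q|\geq C$ characterizing, as claimed.
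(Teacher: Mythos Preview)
Your overall strategy is exactly the paper's: reduce to showing that both $K$ and $mK$ have Property~\spliff{}, then invoke Theorems~\ref{thm:g_geq_3} and~\ref{thm:spaff_slopes}. The reduction paragraph is fine. Where your sketch diverges from the paper is in the verification of \spliff{} itself, and here the paper takes a more efficient route that you have not identified.

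For cases~(i) and~(iii) the paper does not work directly with the chain complex $CFK^\infty$ at all; it uses Proposition~\ref{prop:thickness_one}, which gives a criterion for \spliff{} purely in terms of the vanishing of certain groups $\hfkhat_d(K,s)$. For thin knots this criterion is automatic: the homology is supported on a single diagonal (not two, as you wrote), so in particular $\hfkhat_{2\rho-4}(K,\rho-3)=0$ and Proposition~\ref{prop:thickness_one} applies immediately to both $K$ and $mK$. For the $12$-crossing knots, the paper again applies Proposition~\ref{prop:thickness_one} using only the Poincar\'e polynomials tabulated in KnotInfo; note that the full complexes $CFK^\infty$ are \emph{not} all available, so your proposed ``finite enumeration of the complexes'' is not actually feasible as stated, whereas the thickness-one criterion needs only the homology ranks.

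For case~(ii), your plan to verify \spliff{} for the dual staircase by direct computation is reasonable and would work, but the paper takes a different route for $mK$: it identifies $\ared{k}(mK)$ with $\hfred(S_{mK}^3(n),k)$ via Lemma~\ref{lem:redHFp_small_g}, then appeals to Gainullin's computation that each such group is a single tower $\calT(m)$, hence has at most two $\F$-factors and trivially satisfies \spliff{}. This avoids any explicit analysis of the dual staircase complex.

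In summary: your high-level plan is correct, but you are missing the workhorse Proposition~\ref{prop:thickness_one}, which is what makes cases~(i) and~(iii) short and makes~(iii) feasible with the data actually available.
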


\begin{table}[h!]
\centering
\begin{tabular}{|p{0.8\linewidth}|}
\hline
$11n27$, $11n81$, $11n126$, $12n67$, $12n68$, $12n89 $, $12n93$, $12n96 $, $12n134$, $12n138$, $12n148$, $12n153 $, $12n203 $, $12n229$, $12n244$, $12n276$, $12n292$, $12n368 $, $12n386 $, $12n402 $, $12n418 $, $12n503 $, $12n638$, $12n639$, $12n644$, $12n764$\\
\hline
\end{tabular}
\caption{Knots with at most 12 crossings for which Conjecture~\ref{conj:main} is unresolved.}\label{table:awkward_examples}
\end{table}

\subsection{Property~\spliff{}}
We give a quick definition of Property~\spliff{} for readers familiar with knot Floer homology. Alternative formulations and a more detailed explanation of terminology will be given in Section~\ref{sec:spaff}. Given a knot $K$ with knot Floer chain complex $C=CFK^\infty(K)$ (with $\F=\Z/2\Z$ coefficients), we take $\Abold_k$ to be the homology of the quotient complex 
\[A_k^+=C\{i\geq 0 \text{ or } j\geq k\}.\]
Recall that $\Abold_k$ has the structure of a $\Z$-graded $\F[U]$-module. We use $\F_{(d)}$ to denote the graded $\F[U]$-module consisting of a copy of $\F$ supported in grading $d$ with the module structure given by $ U \F_{(d)}=0$.
\begin{defn}\label{def:spaff1}
We say the knot $K$ has \emph{split $\F$-factors} (Property~\spliff{}) if for all $k\in \Z$ the graded $\F[U]$-module $\Abold_k$ admits a direct sum decomposition of the form
\[
\Abold_k\cong A'\oplus \F_{(d_1)}^{n_1} \oplus \F_{(d_2)}^{n_2},
\]
where $n_1,n_2\geq 0$, $d_1$ is odd, $d_2$ is even and the module $A'$ contains no further summands of the form $\F_{(d)}$ for any $d$.
\end{defn}
This definition says that for each $\Abold_k$, all $\F_{(d)}$ summands are supported in at most two gradings one of which is odd and the other is even. 

The following proposition will be our main tool for exhibiting knots with Property~\spliff{}. 

\begin{restatable}{prop}{propthicknessone}\label{prop:thickness_one}
Let $K$ be a knot whose knot Floer homology has thickness at most one and let $\rho\in \Z$ be a constant such that $\hfkhat_d(K,s)$ is non-zero only if 
\[d\in \{s+\rho,s+\rho-1\}.\]
\begin{enumerate}[(i)]
\item If $\rho\leq 2$ or at least one of the groups $\hfkhat_{2\rho-3}(K,\rho-3)$ or $\hfkhat_{2\rho-4}(K,\rho-3)$ is trivial, then $K$ has Property~\spliff{}.
\item If $\rho\geq -1$ or at least of one of the groups $\hfkhat_{2\rho+2}(K,\rho+2)$ or $\hfkhat_{2\rho+1}(K,\rho+2)$ is trivial, then $mK$ has Property~\spliff{}.
\end{enumerate}
\end{restatable}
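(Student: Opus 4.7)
The plan is to analyze $CFK^\infty(K)$ directly, using the diagonal-support hypothesis to cut down the possible differentials, and then read off the module structure of $\Abold_k$. Under the hypothesis, each generator of $CFK^\infty(K)$ at lattice position $(i,j)$ carries Maslov grading either $i+j+\rho$ (the top diagonal) or $i+j+\rho-1$ (the bottom diagonal). Since the differential drops Maslov grading by one and respects the $(i,j)$-bifiltration, its only possibilities are: between same-diagonal generators, a horizontal step $(i,j)\to (i-1,j)$ or vertical step $(i,j)\to (i,j-1)$; and from a bottom-diagonal to a top-diagonal generator, a jump of total filtration drop two, such as $(i,j)\to(i-1,j-1)$, $(i,j)\to(i-2,j)$, or $(i,j)\to(i,j-2)$. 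These restrictions give a rigid picture of the possible differentials.

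Using standard filtered reduction techniques, I would next decompose $CFK^\infty(K)$, up to filtered chain homotopy equivalence, into an ``essential'' staircase piece that accounts for the tower $\tp$ summand of $\Abold_k$, plus a direct sum of rectangular ``box'' pieces supported at corners $(i,j),(i+1,j),(i,j+1),(i+1,j+1)$ with the standard square-of-differentials configuration. Each such box contributes, for a range of $k$, a single copy of $\F_{(d)}$ to $\Abold_k$, arising as the surviving homology class when the region $\{i\geq 0\}\cup\{j\geq k\}$ meets the box in a non-trivial proper subset. I would compute the grading $d$ of this surviving class in terms of the lattice position of the box and whether its top-right corner lies on the top or bottom diagonal.

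The heart of the argument is to show that, across all boxes and all $k$, the gradings of the contributed $\F$ summands take at most two values of opposite parity. The two natural candidates correspond to boxes with top-right corner on the top diagonal versus those with top-right corner on the bottom diagonal; the resulting gradings differ by one and hence lie in distinct parities, exactly matching the decomposition required by Property~\spliff{}.

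The main obstacle is to rule out ``extremal'' boxes whose position is pushed against the boundary of the Alexander support of $\hfkhat(K)$, which could in principle contribute $\F$ summands in a third grading. A careful case analysis should show that the only boxes capable of producing an anomalous grading are those requiring nonzero generators at Alexander grading $s=\rho-3$ on both diagonals. The hypothesis $\rho\leq 2$ forces $\rho-3\leq -1$, which combined with the conjugation symmetry of $\hfkhat$ and the genus bounds makes such boxes impossible; alternatively, the triviality of either $\hfkhat_{2\rho-3}(K,\rho-3)$ or $\hfkhat_{2\rho-4}(K,\rho-3)$ directly removes a generator that the problematic box requires. Finally, part (ii) follows from applying (i) to the mirror $mK$, since $\hfkhat_d(mK,s)\cong \hfkhat_{-d}(K,-s)$ translates the hypotheses of (ii) for $K$ into those of (i) for $mK$ with replaced parameter $\rho'=1-\rho$.
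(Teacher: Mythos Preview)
Your proposal contains a genuine gap at the structural decomposition step. The claim that $CFK^\infty(K)$ splits, up to filtered chain homotopy equivalence, as a staircase plus a direct sum of $2\times 2$ ``box'' summands is a theorem for \emph{thin} knots (thickness zero), not for thickness-one knots. Indeed, you yourself correctly list differentials such as $(i,j)\to(i-2,j)$ and $(i,j)\to(i,j-2)$ among the possibilities, and these length-two arrows cannot be absorbed into $2\times 2$ squares. Without the box decomposition, the subsequent grading analysis---that each box contributes a single $\F_{(d)}$ with $d$ taking one of two values of opposite parity---has no foundation. The handling of the $\rho\leq 2$ case via ``conjugation symmetry and genus bounds'' is also off target: the relevant point is simply that the problematic index $k=\rho-3$ is then negative, so it never needs to be checked.

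The paper avoids any summand-by-summand classification of $CFK^\infty(K)$. Instead it uses two short exact sequences of filtered complexes,
\[
0\to C\{i\leq -1,\, j\leq k-1\}\to C\to A_k^+\to 0
\quad\text{and}\quad
0\to A_k^+\to C\{i\geq 0\}\oplus C\{j\geq k\}\to C\{i\geq 0,\, j\geq k\}\to 0,
\]
together with the diagonal support hypothesis, to pin down $\ared{k}$ directly: one obtains $\ared{k}\cong \F^{a}_{(k+\rho-1)}\oplus \F^{b}_{(k+\rho-2)}\oplus \calT_{(2k)}(n)$ with $n\in\{\lfloor(\rho-k)/2\rfloor,\lceil(\rho-k)/2\rceil\}$. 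The only way Property~\spliff{} can fail is if this $\calT$-term is itself a single $\F$-factor in a third grading, and a short computation shows this can occur only at $k=\rho-3$. The hypotheses on $\hfkhat_{2\rho-3}(K,\rho-3)$ and $\hfkhat_{2\rho-4}(K,\rho-3)$ then feed into refinements of the exact-sequence analysis to exclude that case. Your reduction of part~(ii) to part~(i) via $\hfkhat_d(mK,s)\cong\hfkhat_{-d}(K,-s)$ and $\rho'=1-\rho$ is correct and matches the paper's argument.
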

Since the knots with thin knot Floer homology are, by definition, those such that there exists a $\rho$ such that $\hfkhat_d(K,s)$ is non-zero only if $d=s+\rho$, Proposition~\ref{prop:thickness_one} easily implies that knots with thin knot Floer homology have Property~\spliff{}. Furthermore, with the exception of those listed in Table~\ref{table:awkward_examples}, Proposition~\ref{prop:thickness_one} shows that for all prime knots with at most 12 crossings both $K$ and $mK$ have Property~\spliff{}.

\subsection{Integer characterizing slopes}
Although this article primarily focused on non-integer characterizing slopes, we are also able to expand the class of knots known to have integer characterizing slopes. Previously, it was known that torus knots \cite{Ni2014characterizing} and hyperbolic $L$-space knots \cite{McCoy2019hyperbolic_char} have infinitely many integer characterizing slopes. We expand this class to include all $L$-space knots and almost $L$-space knots. An $L$-space knot is one whose surgeries have the smallest possible Heegaard Floer homology groups, which is to say
\[
\dim \hfhat(S_K^3(n))= n
\]
for some (equivalently any) integer $n$ satisfying $n\geq 2g(K)-1$ \cite{Ozsvath2005Lensspace}.  The notion of an almost $L$-space knot was introduced by Baldwin and Sivek and describes the knots whose surgeries have the next smallest possible Heegaard Floer homology groups after $L$-space knots \cite{Baldwin52}. More precisely, a knot $K$ in $S^3$ is an \emph{almost $L$-space knot} if
\[
\dim \hfhat(S_K^3(n))= n+2.
\]
for some (equivalently any) integer $n$ satisfying $n\geq 2g(K)-1$.

\begin{restatable}{thm}{almostLspaceknot}\label{thm:almostlspaceknot}
Let $K$ be an $L$-space knot or an almost $L$-space knot in $S^3$. Then there exists a constant $C=C(K)$ such any slope $p/q>C$ is characterizing for $K$.
\end{restatable}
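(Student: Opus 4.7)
The plan is to reduce slopes with $|q|\geq 2$ to the earlier theorems of the paper, and then to treat the integer case directly using the Heegaard Floer invariants of the surgery.

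For slopes $p/q > C$ with $|q|\geq 3$, Theorem~\ref{thm:g_geq_3} applies directly. For slopes of the form $p/2$ with $p$ large and odd, the strategy is to invoke Theorem~\ref{thm:spaff_slopes}, for which one must verify Property~\spliff{}. In the $L$-space case this is essentially automatic: the Ozsváth--Szabó staircase description of $CFK^\infty$ makes every $\Abold_k$ isomorphic to $\tp$ as a graded $\F[U]$-module, so it contains no $\F_{(d)}$ summands and Property~\spliff{} holds vacuously. For almost $L$-space knots the same conclusion follows from the structural results of Baldwin--Sivek, each $\Abold_k$ acquiring at most one additional $\F$-summand in a controlled grading, still compatible with Property~\spliff{}.

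The main new content concerns integer slopes. Suppose $p$ is sufficiently large and $S_K^3(p)\cong S_{K'}^3(p)$ as oriented manifolds. If $K$ is an $L$-space knot, then for $p\geq 2g(K)-1$ the surgery is an $L$-space, hence so is $S_{K'}^3(p)$. A bound on $g(K')$ in terms of $p$ (extracted, for instance, from the Casson--Walker invariant controlling $\Delta''_{K'}(1)$, or from the correction terms of the surgery) ensures that $p\geq 2g(K')-1$ as well, forcing $K'$ to be an $L$-space knot. The $d$-invariants of the common surgery then determine the Alexander polynomial via the Ozsváth--Szabó surgery formula, so $\Delta_K=\Delta_{K'}$ and in particular $g(K)=g(K')$. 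The analogous argument for almost $L$-space knots proceeds via Baldwin--Sivek's characterization of manifolds admitting almost $L$-space surgeries and yields the corresponding conclusions.

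Having reduced to the situation in which $K$ and $K'$ are both (almost) $L$-space knots sharing an Alexander polynomial, the remaining task is topological. If both are hyperbolic, the conclusion is supplied by McCoy's earlier result \cite{McCoy2019hyperbolic_char}. In general, the classification of satellite $L$-space knots as iterated cables of $L$-space knots with constrained cabling parameters, combined with the common JSJ decomposition of $S_K^3(p)=S_{K'}^3(p)$, allows one to match companion knots and cabling data of $K$ and $K'$ inductively. I expect the main obstacle to lie in this last step: although the JSJ pieces of the surgery manifold carry essentially all the satellite data, converting this into a genuine isotopy requires a delicate induction that tracks how cabling parameters interact with the surgery coefficient, relying on Sorya's satellite theorem and the cable classification as base cases and supplying the choice of $C$.
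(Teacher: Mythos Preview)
Your treatment of the non-integer slopes is essentially what the paper does: invoke Theorem~\ref{thm:g_geq_3} for $|q|\geq 3$ and Theorem~\ref{thm:spaff_slopes} for $q=2$ after checking Property~\spliff{}. The paper verifies Property~\spliff{} for almost $L$-space knots in Lemma~\ref{lem:almost_properties} by a mapping-cone argument rather than by citing Baldwin--Sivek, but the conclusion is the same.

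The integer case is where your proposal diverges, and it contains a genuine gap. Your final topological step rests on a ``classification of satellite $L$-space knots as iterated cables of $L$-space knots,'' but no such theorem exists in the literature; what is known (Hom, Baker--Motegi, Hom--Lidman--Vafaee) constrains patterns of satellite $L$-space knots but does not reduce them to iterated cables. More seriously, for \emph{almost} $L$-space knots there is no structural result of this kind at all, so the inductive JSJ matching you sketch has nothing to stand on in that case. You flag this yourself as the expected obstacle, and it is indeed fatal as written.

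The paper avoids this entirely. Once one knows $g(K)=g(K')$, the paper simply applies Theorem~\ref{thm:bounded_genus}, which requires only that $K$ and $K'$ be prime and that $g(K')\leq g(K)$; no special structure of (almost) $L$-space knots is used in the endgame. Primeness is handled by the fact that $T_{2,3}\# T_{2,3}$ is the unique composite $L$-space or almost $L$-space knot, dealt with separately in Proposition~\ref{prop:trefoil_sum}. The genus equality itself is also obtained more cleanly than in your sketch: rather than bounding $g(K')$ via Casson--Walker or ad hoc correction-term estimates, the paper uses Proposition~\ref{prop:Vi_recovery} to get $\nu^+(K)=\nu^+(K')$ directly, and then Lemma~\ref{lem:almost_properties}(ii) (that an almost $L$-space knot has $g=\nu^+$ or $g=1$) forces $g(K)=g(K')$. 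This bypasses the circularity in your argument where you need $p\geq 2g(K')-1$ before you have any control on $g(K')$.
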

One interesting example of an almost $L$-space knot is the composite knot $T_{3,2}\# T_{3,2}$. In fact, this is the only composite almost $L$-space knot \cite{Binns2019almostLspace}. In this case we obtain explicit bounds on the set of characterizing slopes for $T_{3,2}\# T_{3,2}$ and show that any integer slope $n\geq 32$ is a characterizing slope for $T_{3,2}\# T_{3,2}$. As far as the author is aware, this is the first composite knot known to possess integer characterizing slopes. In contrast, Varvarezos has shown that there exists connected sums of torus knots with only finitely many integer characterizing slopes \cite{Varvarezos2023torus}.

\subsection{Proof of Theorems~\ref{thm:g_geq_3} and~\ref{thm:spaff_slopes}}
We conclude the introduction by proving Theorem~\ref{thm:g_geq_3} and Theorem~\ref{thm:spaff_slopes} assuming Theorem~\ref{thm:bounded_genus} and Theorem~\ref{thm:HFKbound}, which are two technical results whose proofs occupy Section~\ref{sec:JSJ} and Section~\ref{sec:spaff}, respectively. Since both Theorem~\ref{thm:g_geq_3} and Theorem~\ref{thm:spaff_slopes} follow the same strategy we proceed with both simultaneously. Let $K$ be a knot in $S^3$ and suppose that $p/q\in \Q\setminus\Z$ is a non-integer non-characterizing slope for $K$.  Since $p/q$ is a characterizing slope for $K$ if and only if $-p/q$ is a characterizing slope for $mK$, we may assume that $p/q>0$. Thus, let $K'$ be a knot such that $K'\not\simeq K$ and $S_{K}^3(p/q)\cong S_{K'}^3(p/q)$. The is aim to obtain upper bounds on $p$ and $q$ that are independent of the particular $K'$ under consideration. Two recent results of Sorya will significantly aid us in our endeavours. Firstly, Sorya has already bounded $q$.
\begin{thm}[Sorya, \cite{Sorya2023satellite}]\label{thm:Sorya1}
Let $K$ be a knot in $S^3$. Then there exists a constant $Q=Q(K)$ such that any slope $p/q$ satisfying $|q|\geq Q$ is a characterizing slope for $K$. \qed
\end{thm}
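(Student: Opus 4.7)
The plan is to proceed by induction on the complexity of the JSJ decomposition of the knot exterior $E(K)=S^3\setminus N(K)$, using as base cases Lackenby's theorem for hyperbolic knots and Ni's results for torus knots. For the hyperbolic case, Lackenby's theorem already supplies a constant $Q_0(K)$ such that any slope with $|q|\geq Q_0$ is characterizing, and for torus knots the result follows directly from the explicit characterization of non-characterizing slopes. So the substance lies in handling satellite knots via the JSJ decomposition.

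Let $T_1,\dots,T_n$ be the JSJ tori of $E(K)$ and let $N$ be the outermost piece (i.e.\ the JSJ piece meeting $\partial E(K)$). The first step is to show that if $|q|$ is large enough (depending on the slopes cut out by the $T_i$ on $\partial E(K)$), then the images of $T_1,\dots,T_n$ in $S^3_K(p/q)$ remain incompressible and pairwise non-parallel. This should follow from the standard Dehn-filling facts (in the style of Wu, Gordon, or the $6$-theorem) that essential tori survive all but finitely many fillings on each boundary component. Consequently the JSJ decomposition of $S^3_K(p/q)$ refines the filled decomposition of $E(K)$, and in particular the filled copy of $N$ appears as a canonical JSJ piece of the surgered manifold.

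The next step is to recover the surgery solid torus intrinsically from this piece. If $N$ is hyperbolic, then by Thurston's hyperbolic Dehn-surgery theorem the filled manifold $N(p/q)$ is itself hyperbolic for large $|q|$, the core of the filling is a short geodesic whose length tends to $0$, and for $|q|$ large enough this geodesic is shorter than any other short geodesic present uniformly in $K'$. Mostow rigidity then identifies the core, and drilling it out recovers $N$; the other JSJ pieces and their gluings, which are common to both $E(K)$ and $E(K')$, are also recovered, so we can reassemble the knot exterior and appeal to Gordon–Luecke. If $N$ is Seifert fibered (the cable-space case), one argues instead that for sufficiently large $|q|$ the core of the filling must be an exceptional fiber of $N(p/q)$ distinguished by its Seifert invariants, again recovering $N$.

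The main obstacle is the Seifert fibered case, where two genuinely different Seifert fibered pieces filled along different slopes can produce homeomorphic Seifert fibered pieces in the quotient, so one must exploit the largeness of $|q|$ to pin down which exceptional fiber is the surgery core and check that the recovered cable structure matches the one coming from $K'$. This forces the base surface, multiplicities, and Seifert invariants of $N$ to agree with those of the corresponding piece for $K'$, after which induction on the depth of the JSJ tree reduces everything to the companion knot, to which the hypothesis applies with a smaller complexity constant.
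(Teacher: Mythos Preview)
The paper does not prove this statement. Theorem~\ref{thm:Sorya1} is quoted from Sorya's work \cite{Sorya2023satellite}: note the attribution in the theorem heading and the terminal \qed{} with no intervening argument. It is used purely as a black-box input when assembling the proofs of Theorems~\ref{thm:g_geq_3} and~\ref{thm:spaff_slopes} at the end of the introduction. There is therefore no proof in this paper for your proposal to be compared against.

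For what it is worth, your outline is in the right spirit --- Sorya's argument does extend Lackenby's hyperbolic result through the JSJ decomposition --- but two points in your sketch would need tightening if you actually wanted to carry it out. First, the constant $Q$ must depend only on $K$, so the phrase ``shorter than any other short geodesic present uniformly in $K'$'' is doing unjustified work; the recovery of the surgery core has to be argued intrinsically from the filled manifold, which is exactly what Lackenby's systole/volume estimates accomplish. Second, ``induction on the depth of the JSJ tree reduces everything to the companion knot'' presumes that $K'$ is a satellite with the same pattern and some companion to which one can recurse, but a priori you know nothing about $K'$; the argument must instead reconstruct the entire exterior $E(K')$ from the JSJ pieces of the surgered manifold and then invoke Gordon--Luecke, rather than peel off one pattern piece and hand the rest to an inductive hypothesis on $K'$.
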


 Secondly, as Sorya has shown that every non-integer is characterizing for a composite knot \cite{Sorya2023satellite}.
\begin{thm}[Sorya, \cite{Sorya2023satellite}]\label{thm:Sorya2}
Let $K$ be a composite knot in $S^3$. Any non-integer slope $p/q\in \Q\setminus \Z$ is characterizing for $K$. \qed
\end{thm}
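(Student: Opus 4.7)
\medskip

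\noindent\textbf{Proof proposal.}
The plan is to exploit the JSJ decomposition of the surgered manifold $Y = S_K^3(p/q)$ and to show that, when $p/q$ is non-integer, this decomposition encodes enough information to recover the composite knot $K$ up to isotopy. For $K = K_1 \# K_2$, the essential structural feature is the swallow-follow torus $T$ in the knot exterior $X_K$ associated to the decomposing sphere of the connect sum. The first step is to verify that $T$ remains essential after Dehn filling of slope $p/q$, provided $|q|\geq 2$: an intersection-number argument (the meridian of the surgery solid torus meets a meridian of $K$ sitting on $T$ in $|q|$ points, ruling out compression through the filled solid torus) together with the classical incompressibility of the swallow-follow torus should give this.

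Next I would read off the JSJ pieces of $Y$ cut along $T$. The outer piece is the companion exterior $X_{K_1}$, unchanged by the surgery; the inner piece, obtained from the pattern solid torus with $p/q$-surgery performed on the pattern knot, is (after tracking the framing induced by the satellite construction) the boundary connect sum of an unknotted solid torus with the closed manifold $S_{K_2}^3(p/q)$. Together with the gluing across $T$, these pieces canonically encode the data $(K_1, K_2, p/q)$. If $S_{K'}^3(p/q) \cong Y$, then by uniqueness of the JSJ decomposition the same splitting must arise from the surgery description of $K'$; matching JSJ pieces, together with Gordon--Luecke applied to each summand, should force $K' \simeq K$. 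Appealing to Theorem~\ref{thm:Sorya1} keeps $|q|$ bounded, so the argument does not need to handle arbitrarily pathological JSJ behaviour in the small-$q$ range.

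The main obstacles I anticipate are: carefully tracking the framing identification between the abstract pattern solid torus and its embedding as a neighborhood of $K_1$ in $S^3$, so that the surgery slope translates correctly to the inner JSJ piece; handling the reducibility of the inner piece (one recovers $(K_2, p/q)$ via the prime decomposition of the closed summand inside it); and ruling out exotic reconstructions of $Y$ coming from a non-composite $K'$, or from a composite $K'$ with a different decomposing sphere. The last point reduces to the uniqueness (up to isotopy) of the essential torus $T$ inside $Y$, which once again leans on the rigidity provided by $|q|\geq 2$ and on the fact that $T$ is distinguished among essential tori in $Y$ by the special form of its two complementary pieces.
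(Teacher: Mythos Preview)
The paper does not prove this statement: Theorem~\ref{thm:Sorya2} is quoted from Sorya~\cite{Sorya2023satellite} and is stated with a \qed{} and no argument. There is therefore nothing in the present paper to compare your proposal against; any assessment would have to be made against Sorya's original paper, not this one.

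That said, a few remarks on the sketch itself. Your picture for a two-summand composite is not quite the JSJ decomposition described in Theorem~\ref{thm:knot_JSJ_decomp}: the outermost piece of $S^3_{K_1\#K_2}$ is a composing space $S^1\times F$ with $F$ a pair of pants, bounded by \emph{two} swallow-follow tori (one for each summand), not a single torus separating a companion exterior from a pattern piece. Your description of the ``inner piece'' as a boundary connect sum of a solid torus with $S^3_{K_2}(p/q)$ is accordingly off; after filling, the composing space becomes a Seifert piece glued to the unchanged exteriors $X_{K_1}$ and $X_{K_2}$, and the analysis must track how the fibre slope of that piece interacts with the meridians on those tori. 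Finally, the appeal to Theorem~\ref{thm:Sorya1} at the end is misplaced: that theorem asserts that all slopes with $|q|$ large are characterizing, so it does not ``keep $|q|$ bounded'' --- if anything, it handles the large-$|q|$ regime and the hard work is precisely for small $|q|\geq 2$.
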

This allows us to further assume that $K$ and $K'$ are both prime knots.

We use two different techniques to bound the numerator $p$ depending on the genus of the knot $K'$. When the genus $g(K')$ is small, which is to say $g(K')\leq g(K)$, we are able to gain control over $p$ by carefully comparing the JSJ decompositions of the complements of the knots $K$ and $K'$. This results in the following bound.
\newcommand{\thmboundedgenus}{Let $K$ be a prime knot in $S^3$. Then there exists a constant $M=M(K)$ such that if $K'\not\simeq K$ is a prime knot with $g(K')\leq g(K)$ and we have a homeomorphism $S_{K}^3(p/q)\cong S_{K'}^3(p/q)$ for some $p/q\in \Q$, then $|p|\leq M|q|$.}

\begin{customthm}{Theorem~\ref{thm:bounded_genus}}
\thmboundedgenus{}
\end{customthm}

The second method is to study the Heegaard Floer homology of surgeries on $K$ and $K'$. This complements Theorem~\ref{thm:bounded_genus} by yielding upper bounds on $p$ when the genus of $K'$ is relatively large, which is to say $g(K')>g(K)$. The theorem stemming from this method is the following. The conclusions pertaining to the Alexander polynomial and fibredness do not play a role in this paper, but are included as they may admit future applications.
\newcommand{\thmHFKbound}{Let $K$ be knot in $S^3$. Suppose that there is a knot $K'$ and a slope $p/q>0$ such that $p\geq 12+4q^2 -2q +4qg(K)$ and $S_{K}^3(p/q)\cong S_{K'}^3(p/q)$. If one of the following conditions holds:
\begin{enumerate}[(i)]
\item $q\geq 3$,
\item $q=2$ and $K$ has Property~\spliff{}.
\end{enumerate}
Then $g(K)=g(K')$, $\Delta_K(t)=\Delta_{K'}(t)$ and $K$ is fibred if and only if $K'$ is fibred.}

\begin{customthm}{Theorem~\ref{thm:HFKbound}}
\thmHFKbound{}
\end{customthm}
We note that the part of Theorem~\ref{thm:HFKbound} pertaining to the case $q\geq 3$ was previously established in \cite[Theorem~1.7]{McCoy2020torus_char}. The content of this paper is the result valid for $q=2$.
Thus if we assume that $q\geq 3$ or $q=2$ and $K$ has Property~\spliff{}, then Theorem~\ref{thm:bounded_genus} and Theorem~\ref{thm:HFKbound} together imply that
\begin{align*}
p&\leq \max\{ Mq, 12+4q^2-2q+4qg(K)\}\\
&\leq \max\{ MQ, 12+4Q^2+4Qg(K)\},
\end{align*}
where $Q$ and $M$ are the constants from Theorem~\ref{thm:Sorya1} and Theorem~\ref{thm:bounded_genus} respectively. Since we have bounds on both $p$ and $q$, Theorems~\ref{thm:g_geq_3} and~\ref{thm:spaff_slopes} then follow.

\begin{rem}
Our inability to prove Conjecture~\ref{conj:main} for all knots stems from the knot Floer homology component  of the paper. We are unable to extract the necessary constraints on $p$ from Heegaard Floer homology when $q=2$.
 \end{rem}

\subsection*{Acknowledgements}
The author would like to thank Steve Boyer, Jen Hom and Patricia Sorya for helpful conversations; Fraser Binns for communicating an early version of his paper \cite{Binns2019almostLspace} and the anonymous referee for their thoughtful feedback. This work was carried out whilst the author was supported by FRQNT, NSERC and a Canada Research Chair.

\section{Surgery on knots of bounded genus}\label{sec:JSJ}
Given a knot $K$ in a 3-manifold $M$, we will use $M_K$ to denote the complement of the knot. That is, $M_K$ denotes $M$ minus an open tubular neighbourhood of $K$. In particular, given a knot $K$ in $S^3$, the complement of $K$ will be denoted be $S^3_K$. Given a 3-manifold $M$ with a torus boundary component $T$, a slope $\sigma$ on $T$ is an unoriented essential simple closed curve on $T$ considered up to isotopy. Given a 3-manifold $M$ and a slope $\sigma$ on a torus boundary component of $M$ we will use $M(\sigma)$ to denote Dehn filling along that slope. For a knot complement $S^3_K$, we have a natural basis for $H_1(\partial S_K^3;\Z)$ coming from the meridian $\mu$ and null-homologous longitude $\lambda$. This allows us to identify slopes on $\partial S_K^3$ with $\Q \cup \{1/0\}$ in the usual way: a slope $\sigma$ corresponds to $p/q$ if  it is homologous to $p\mu +q \lambda$ for some choice of orientation on $\sigma$.   Thus $p/q$-surgery along a knot $K$ in $S^3$ is denoted by $S^3_K(p/q)$. In the event that we are filling a manifold with multiple boundary components we will not overburden the notation by explicitly indicating which boundary component is being filled since this will always be clear from context.

We remind the reader that any compact, orientable, irreducible 3-manifold $M$ whose boundary is a (possibly empty) collection of tori admits a finite collection of embedded tori which decompose $M$ into pieces which are either atoroidal or Seifert fibred. The minimal collection of such tori is unique up to isotopy \cite{Jaco1978decomposition, Johannson1979homotopy}. Splitting $M$ along such a minimal collection of tori yields the JSJ decomposition of $M$.

 For a knot $K$ in $S^3$ we will need to consider the JSJ decomposition of the knot complement $S^3_K$. Given the JSJ decomposition of $S_{K}^3$, then one its components is distinguished as containing the boundary of $S_{K}^3$. We will refer to this piece as the {\em outermost piece} of the decomposition. The outermost piece has a distinguished slope corresponding to the meridian of the knot. Budney provides a detailed description of the possible components arising in the JSJ decomposition of a knot exterior \cite{Budney2006JSJ}.
\begin{thm}\label{thm:knot_JSJ_decomp}
Let $K$ be a non-trivial knot in $S^3$ and let $X$ be a JSJ component of $S^3_K$. Then $X$ takes one of the following forms.
\begin{enumerate}
    \item $X$ is a Seifert fibred space over the disk with two exceptional fibres. If the JSJ decomposition of $S^3_K$ contains such an $X$, then it is necessarily the outermost piece and $K$ is a torus knot.
    \item $X$ is homeomorphic to $S^1\times F$ where $F$ is a compact planar surface with at least three boundary components. If such an $X$ is the outermost component, then $K$ is a composite knot.
    \item $X$ is a hyperbolic manifold. If such an $X$ is the outermost piece, then filling $X$ along the meridian yields a (possibly empty; i.e homeomorphic to $S^3$) connected sum of solid tori.
    \item $X$ is a Seifert fibred space over the annulus with one exceptional fibre. If such an $X$ is the outermost piece, then $K$ is a cable of a non-trivial knot $K'$.
\end{enumerate}
\end{thm}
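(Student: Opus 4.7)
The plan is to derive the theorem as a consequence of the general classification of JSJ decompositions of knot exteriors carried out by Budney in \cite{Budney2006JSJ}. Since $S^3_K$ is compact, orientable, irreducible, with toral boundary, it admits a JSJ decomposition, and each piece $X$ is either Seifert fibered or hyperbolic. The central constraint is that $X$ embeds in $S^3$, which severely restricts the topology of the possible Seifert fibered pieces.

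First I would show that for each Seifert fibered piece $X$, the base orbifold $\Sigma$ must be planar, i.e., a subsurface of $S^2$, and can carry only a limited number of cone points. The argument uses that any vertical torus in $X$ separates $S^3$ with a solid torus (or more generally a Seifert fibered solid piece) on at least one side, combined with the classical Waldhausen-type analysis of Seifert fibered submanifolds of $S^3$. Combining this with the counting of boundary components of $X$ (each boundary either lies on $\partial S^3_K$ or is a JSJ torus), one arrives at only four possibilities: a disk with two cone points (giving a torus knot complement), a planar surface with at least three boundary components and no exceptional fibers (a composing space $S^1\times F$), an annulus with one exceptional fiber (a cable space), or no Seifert fibering at all (the hyperbolic case).

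Next I would analyze what happens when each of these pieces appears as the outermost piece, the one containing $\partial S^3_K$. In the first case the piece has a unique boundary component and must exhaust the entire JSJ decomposition, so $K$ is a torus knot. In the composing case, the meridian of $K$ must coincide with a regular fiber of $S^1\times F$, so filling along it reveals that $K$ decomposes as a connected sum of the knots obtained by filling the inner JSJ pieces. In the cable case, filling the inner boundary torus of the annular Seifert piece produces a regular neighborhood of a non-trivial knot $K'$, exhibiting $K$ as a cable of $K'$.

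Finally, for the hyperbolic case, I would fill the meridional slope on $\partial S^3_K \subset X$. Reattaching the inner JSJ pieces to the result must recover $S^3$. Each of those inner pieces, viewed as a submanifold of $S^3$, sits inside a solid torus given by a regular neighborhood of its companion knot, so filling $X$ along the meridian produces a manifold into which several solid tori glue to yield $S^3$; this forces the meridional filling to be a connected sum of solid tori, possibly just $S^3$ itself. The main obstacle is tightening the Seifert fibered embedding step in the second paragraph, i.e., ruling out stray exceptional fibers and non-planar bases, which is the technical heart of Budney's analysis and the main reason a self-contained argument would essentially reproduce all of \cite{Budney2006JSJ}.
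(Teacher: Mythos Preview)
Your proposal is correct and takes essentially the same approach as the paper: both simply invoke Budney's classification \cite[Theorem~4.18]{Budney2006JSJ}, with the paper's proof consisting of nothing more than that single citation. Your additional paragraphs are a faithful sketch of how Budney's argument proceeds, and you correctly identify that making it self-contained would amount to reproducing \cite{Budney2006JSJ}; the paper does not attempt this either.
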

\begin{proof}
This is a subset of the information contained in \cite[Theorem~4.18]{Budney2006JSJ}.
\end{proof}
For brevity, we will refer to a knot $K$ for which the outermost piece of the JSJ decomposition is hyperbolic as a knot of \emph{hyperbolic type}. By definition, every hyperbolic knot in $S^3$ is an example of a knot of hyperbolic type.

\begin{rem} We note some consequences of Theorem~\ref{thm:knot_JSJ_decomp}.
\begin{itemize}
\item If a knot is prime, then it is either a torus knot, a cable knot or a knot of hyperbolic type.
\item Since the only compact 3-manifolds with boundary admitting more than one Seifert fibred structure are the solid torus and the twisted $I$-bundle over the Klein bottle \cite[Theorem~2.4.32]{Brin2007seifertfiberedspacesnotes}, all of the Seifert fibred spaces occurring in Theorem~\ref{thm:knot_JSJ_decomp} admit unique Seifert fibred structures. Thus we can unambiguously refer to the orders of the exceptional fibres in these spaces are invariants of the knot $K$.
\end{itemize}
\end{rem}

\subsection{Knots of hyperbolic type}
In this section we analyse surgeries on knots of hyperbolic type, which are those for which the outermost piece of the JSJ decomposition is a hyperbolic manifold.

Given a knot $K$ of hyperbolic type, we can assign a length to slopes on $\partial S_K^3$ as follows. Let $X_0$ be the outermost piece in the JSJ decomposition of $S^3_{K}$; this is, by definition, hyperbolic. Given a maximal horocusp neighbourhood $N$ of the cusp corresponding to $\partial S^3_{K} \subseteq X_0$, we can measure the length of a slope $p/q$ on $\partial S^3_{K}$ using the natural Euclidean metric on $\partial N$. We take $\ell_K(p/q)$ to denote the length of $p/q$ when measured in the maximal horocusp neighbourhood.

By appropriately manipulating a Seifert surface, we can guarantee that it meets the outermost piece of the JSJ decomposition nicely.
\begin{prop}\label{prop:hyperbolic_surface}
Let $K$ be non-trivial knot in $S^3$. Then the outermost piece of the JSJ decomposition of $S_{K}^3$ contains an essential embedded surface $S$ which meets $\partial S_{K}^3$ in a longitude and whose Euler characteristic satisfies $|\chi(S)|\leq 2g(K)-1$.
\end{prop}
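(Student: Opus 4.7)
The plan is to produce $S$ by starting with a minimal genus Seifert surface $F$ for $K$ and cutting it along the JSJ tori. Take $F$ to be a minimal genus Seifert surface, so $F$ is connected, has $\partial F$ a longitude on $\partial S_K^3$, and $\chi(F) = 1 - 2g(K)$. Let $T_1,\ldots,T_n$ denote the JSJ tori of $S_K^3$, and isotope $F$ so that it meets $T_1\cup\cdots\cup T_n$ transversely with a minimum number of intersection curves. I then let $S$ be the component of $F\cap X_0$ that contains $\partial F$, so by construction $S$ is a properly embedded surface in $X_0$ meeting $\partial S_K^3$ in the longitude.

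The first technical step is to run standard innermost-disk and innermost-curve arguments to show that after such an isotopy every component of $F\cap T_i$ is essential both in $F$ and in $T_i$. Since $F$ is incompressible in $S_K^3$ and each $T_i$ is essential in $S_K^3$, any trivial intersection curve on either side would yield a compressing or boundary-compressing disk that could be used to reduce $|F\cap(T_1\cup\cdots\cup T_n)|$, contradicting minimality. Consequently, cutting $F$ along $F\cap(T_1\cup\cdots\cup T_n)$ produces a collection of surfaces none of which is a disk, so each cut piece has non-positive Euler characteristic. As Euler characteristic is additive under cutting along curves, the sum of these Euler characteristics equals $\chi(F)=1-2g(K)$, and in particular the piece $S$ satisfies
\[
1 - 2g(K) \;\leq\; \chi(S) \;\leq\; 0,
\]
which gives the desired bound $|\chi(S)|\leq 2g(K)-1$.

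The remaining task is to verify that $S$ is essential in $X_0$, i.e.\ incompressible, boundary-incompressible, and not boundary-parallel. For incompressibility, a compressing disk $D\subset X_0$ for $S$ would have $\partial D$ bounding a disk $D'$ in $F$ by incompressibility of $F$ in $S_K^3$; if $D'$ crosses any other JSJ torus, an innermost subdisk argument produces a reducing isotopy of $F$ contradicting minimality, and otherwise $D'\subset S$ shows $D$ was not a genuine compression. Boundary-incompressibility follows by an analogous argument using a boundary-compressing disk and the fact that $\partial S$ on $\partial S_K^3$ is a longitude, hence essential there. Finally, $S$ cannot be boundary-parallel in $X_0$ since $\partial X_0$ consists of $\partial S_K^3$ together with JSJ tori, and a boundary-parallel $S$ would force $\partial F$ to be isotopic through $X_0$ into a JSJ torus, which is incompatible with the minimality of $|F\cap T_i|$ and with $\partial F$ being the longitude on $\partial S_K^3$.

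The main obstacle is the last step, verifying essentiality, which requires careful innermost-disk bookkeeping to rule out compressions, boundary compressions, and boundary parallelism simultaneously; the Euler characteristic bound itself is the comparatively easy byproduct of the minimal-intersection setup.
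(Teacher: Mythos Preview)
Your approach is essentially the same as the paper's: start with a minimal genus Seifert surface, isotope it to minimize intersection with the JSJ tori (the paper minimizes only against $\partial X_0$, but this is immaterial), and take its intersection with $X_0$, then verify incompressibility and boundary-incompressibility via the standard innermost-disk/arc arguments and deduce the Euler characteristic bound from the fact that no cut piece is a disk. The only minor differences are that you take just the component containing $\partial F$ rather than all of $F\cap X_0$, and you explicitly address boundary-parallelism (which the paper leaves implicit); your boundary-incompressibility sketch is terser than the paper's careful Claim~B, but the underlying argument is the same.
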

\begin{proof}
\setcounter{claim}{0}
Let $X_0$ be the outermost piece of the JSJ decomposition for $S_{K}^3$. Take $\Sigma$ to be a minimal genus Seifert surface for $K$ which intersects $\partial X_0$ transversely in the minimal possible number of curves. We will show, using standard 3-manifold arguments, that $S=\Sigma \cap X_0$ is a surface with the desired properties. If $K$ is a simple knot, then $S=\Sigma$ and the condition $|\chi(S)|\leq 2g(K)-1$ is automatically satisfied. 

\begin{claim}
The surface $S$ is incompressible.
\end{claim}
\begin{proof}[Proof of Claim]
Let $D_1\subseteq X_0$ be a compressing disk for $S$, i.e. an embedded disk with $\partial D_1 =D_1\cap S$. Since the surface $\Sigma$ is incompressible in $S_K^3$, there must be an embedded disk $D_2\subseteq \Sigma$ with $\partial D_1=\partial D_2$. We may obtain a new Seifert surface $\Sigma'$ by replacing the interior of $D_2$ with the interior of $D_1$. If $D_2 \cap \partial X_0$ were non-empty, then $\Sigma'\cap X_0$ would contain fewer components than $\Sigma\cap X_0$. However, we are assuming $\Sigma\cap X_0$ to be minimal and so $D_2 \cap \partial X_0$ must be empty. This implies that $D_2$ is contained in $X_0$ and hence that $D_2\subseteq S$. This verifies the incompressibility of $S$.
\end{proof}

\begin{claim}
The surface $S$ is boundary incompressible.
\end{claim}
\begin{proof}[Proof of Claim]
Let $D_1\subseteq X_0$ be a $\partial$-compressing disk for $S$, i.e. an embedded disk whose boundary can be decomposed into two intervals $\partial D_1=\alpha \cup \beta$ such that $\beta =D_1\cap S$ and $\alpha=D_1\cap \partial X_0$. There is an isotopy of $\Sigma$ guided by the disk $D_1$ to a surface $\Sigma'$ such that $\Sigma' \cap \partial X_0$ is obtained from $\Sigma \cap \partial X_0$ by ambient surgery along the arc $\alpha$ in $\partial X_0$. Thus if the arc $\alpha$ connected two distinct components of $\Sigma \cap \partial X_0$, then $\Sigma' \cap \partial X_0$ would contain fewer components than $\Sigma \cap \partial X_0$. This contradicts the assumed minimality of $\Sigma \cap \partial X_0$. Thus we see that the end points of $\alpha$ must lie on a single component $\gamma$ of $\Sigma \cap \partial X_0$.

 Let $T\subseteq \partial X_0$ be the boundary component containing $\gamma$ and $\alpha$. Since $S$ is a 2-sided surface in $X_0$, the curve $\alpha$ separates $T\setminus \gamma$ into two components. One of these components is an disk $D_2$ whose interior is necessarily disjoint from $S$. The disk $D_1\cup D_2$ forms a compressing disk for $S$. As $S$ is incompressible, the disk $\partial D_1 \cup D_2$ bounds a disk in $S$. Thus the original disk $D_1$ cuts off a disk from $S$. This shows that $S$ is boundary incompressible.
\end{proof}
Thus we have shown that the surface $S$ is essential and by construction it evidently meets $\partial S_K^3$ in a longitude. Thus, it remains only to consider the Euler characteristic of $S$. Since the boundary tori of $\partial X_0$ are incompressible in $S^3_{K}$, none of the curves in $S\cap \partial X_0$ bound a disk in $S^3_{K}$. It follows that no component of $\Sigma \setminus S$ can be a disk. In particular, $\chi(\Sigma\setminus S)\leq 0$.
Thus, thus we obtain the bound:
\[
\chi(S)\geq \chi(\Sigma \setminus S)+ \chi(S)=\chi(\Sigma)=1-2g(K),
\]
which is to say $|\chi(S)|\leq 2g(K)-1$.
\end{proof}

Using Proposition~\ref{prop:hyperbolic_surface}, we can obtain bounds on $\ell_K(p/q)$ in terms of $g(K)$.

\begin{prop}\label{prop:length_bound}
Let $K$ be a knot of hyperbolic type. Then 
\[
|p|\leq \sqrt{3}(2g(K)-1)\ell_K(p/q).
\]
\end{prop}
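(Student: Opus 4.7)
The plan is to use the flat geometry of the maximal horocusp in $X_0$ together with the essential surface constructed in Proposition~\ref{prop:hyperbolic_surface}. Let $X_0$ denote the outermost (hyperbolic) piece of the JSJ decomposition of $S_K^3$, let $N$ be the maximal horocusp at $\partial S_K^3\subset X_0$, and write $T=\partial N$ for the flat torus bounding $N$. On any flat torus, the algebraic intersection number of two slopes $\alpha$ and $\beta$ with geodesic representatives of lengths $\ell(\alpha)$ and $\ell(\beta)$ satisfies
\[
|\alpha\cdot\beta|=\frac{\ell(\alpha)\ell(\beta)\sin\theta}{\operatorname{area}(T)}\leq \frac{\ell(\alpha)\ell(\beta)}{\operatorname{area}(T)}.
\]
Applied with $\alpha=p/q$ and $\beta=\lambda$, and using that $(p/q)\cdot\lambda=p$, this yields
\[
|p|\leq \frac{\ell_K(p/q)\,\ell_K(\lambda)}{\operatorname{area}(T)}.
\]

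The area of $T$ is bounded below by $\sqrt{3}$ via the B\"or\"oczky horoball-packing estimate for maximal cusps in cusped hyperbolic $3$-manifolds, so what remains is to bound $\ell_K(\lambda)$ from above by $3(2g(K)-1)$. For this I will use the essential surface $S\subset X_0$ provided by Proposition~\ref{prop:hyperbolic_surface}: it has $|\chi(S)|\leq 2g(K)-1$ and meets $\partial S_K^3$ in a single copy of $\lambda$. After isotoping $S$ to a pleated or minimal-surface representative so that $S\cap N$ consists of standard cusp annuli, a Gauss--Bonnet comparison of the total area of $S$ with the area-contribution $\ell_K(\lambda)$ of the cusp annulus on the distinguished cusp---together with the usual slope-length inequality of Agol--Lackenby type, exploiting that $S$ is two-sided and meets this cusp in only one curve---will give $\ell_K(\lambda)\leq 3|\chi(S)|\leq 3(2g(K)-1)$.

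Combining the two estimates then yields exactly the required
\[
|p|\leq \frac{\ell_K(p/q)\cdot 3(2g(K)-1)}{\sqrt{3}}=\sqrt{3}\,(2g(K)-1)\,\ell_K(p/q).
\]
The main obstacle is the slope-length estimate with the sharp constant $3$: the intersection estimate on the flat torus and the B\"or\"oczky area bound are both standard, but obtaining $3|\chi(S)|$ rather than the weaker $6|\chi(S)|$ familiar from Agol's $6$-theorem requires care in the cusp geometry, exploiting the two-sidedness of $S$ and the fact that it meets our distinguished cusp in a single boundary curve.
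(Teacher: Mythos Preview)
Your overall architecture matches the paper's proof: bound $|p|$ by the product $\ell_K(p/q)\ell_K(\lambda)/\operatorname{area}(T)$ and then control $\ell_K(\lambda)$ via the essential surface $S$ from Proposition~\ref{prop:hyperbolic_surface}. The gap is precisely where you flag it. The bound $\ell_K(\lambda)\leq 3|\chi(S)|$ is not a known sharpening of Agol's Theorem~5.1, and neither of the features you invoke buys the missing factor of two. Two-sidedness is already implicit in the standard pleated-surface argument (and in any case orientability of a Seifert surface is automatic), so it does not halve the constant. Having a \emph{single} boundary component on the distinguished cusp is, if anything, the worst case: Agol's estimate gives $n\cdot\ell_K(\lambda)\leq 6|\chi(S)|$ when $S$ meets that cusp in $n$ parallel copies of $\lambda$, so more boundary components would help and $n=1$ simply recovers $\ell_K(\lambda)\leq 6|\chi(S)|$. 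With only the B\"or\"oczky bound $\operatorname{area}(T)\geq\sqrt{3}$ and Agol's $6|\chi(S)|$, your computation yields $|p|\leq 2\sqrt{3}(2g(K)-1)\ell_K(p/q)$, off by a factor of two.

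The paper recovers the correct constant not by sharpening Agol but by sharpening the area bound: it invokes the classification of hyperbolic $3$-manifolds with minimal cusp volume to get $\operatorname{area}(\partial N)\geq 2\sqrt{3}$ rather than $\sqrt{3}$. Plugging this into the same intersection inequality, together with the standard Agol bound $\ell_K(0/1)\leq 6(2g(K)-1)$, gives
\[
2\sqrt{3}\,|p|\leq \ell_K(p/q)\cdot 6(2g(K)-1),
\]
which is exactly the desired estimate. So the fix is to replace your B\"or\"oczky input by the stronger cusp-area bound and keep Agol's $6|\chi(S)|$ as is.
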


\begin{proof}
Let $X_0$ be the outermost piece in the JSJ decomposition of $S^3_{K}$. By the classification of hyperbolic manifolds with minimal cusp volume \cite[Theorem~1.2]{Gabai_low_vol}, there exists a horocusp neighbourhood $N$ of $\partial S_{K}^3\subseteq X_0$ with $\mathrm{Area}(\partial N)\geq 2\sqrt{3}$.
A simple geometric argument in the universal cover of $\partial N$ (e.g. as used by Cooper and Lackenby \cite[Lemma~2.1]{Cooper1998Dehn}) shows that for any pair of slopes $\alpha$ and $\beta$ on $\partial S_K^3$ we have 
\begin{equation}\label{eq:length_bound1}
2\sqrt{3}\Delta(\alpha,\beta)\leq \ell_K(\alpha)\ell_K(\beta)
\end{equation}
where $\Delta(\alpha,\beta)$ denotes the distance between $\alpha$ and $\beta$. Given the essential surface $S$ constructed in Proposition~\ref{prop:hyperbolic_surface}, we can apply results of Agol to obtain the bound  \cite[Theorem~5.1]{Agol2000BoundsI} (or for a similar bound, see \cite[Theorem~5.1]{Cooper1998Dehn}):
\[\ell_K(0/1)\leq 6(2g(K)-1).\] 
Since $\Delta(0/1,p/q)=|p|$,  applying \eqref{eq:length_bound1} to the slopes $0/1$ and $p/q$ yields
\[
2\sqrt{3}|p|\leq 6(2g(K)-1)\ell_K(p/q).
\]

\end{proof}

We make also the following observation.
\begin{lem}\label{lem:fixed_JSJ_piece}
Let $M$ be any 3-manifold. Then there exists a constant $L=L(M)$ depending only on $M$ such that if $N$ is a hyperbolic 3-manifold and $\sigma$ a slope on a component of $\partial N$ such that $N(\sigma)\cong M$, the slope $\sigma$ has length $\ell(\sigma)\leq L$ as measured in any horocusp neighbourhood.
\end{lem}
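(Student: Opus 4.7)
My plan is to split into cases according to whether or not $M$ admits a complete hyperbolic structure on its interior, and apply one of two classical results in each case.

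Suppose first that $M$ is not hyperbolic. Then I would invoke the 6-theorem of Agol and Lackenby: if $\sigma$ is a slope on a torus cusp of a finite volume hyperbolic 3-manifold $N$ with length greater than $6$ in the maximal horocusp, then $N(\sigma)$ is irreducible, atoroidal, and not a small Seifert fibred space. Combined with the geometrisation theorem this forces $N(\sigma)$ to be hyperbolic. Hence if $N(\sigma) \cong M$ and $M$ is non-hyperbolic then automatically $\ell(\sigma) \le 6$, and one may take $L = 6$ in this case.

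Suppose instead that $M$ is hyperbolic. By Mostow rigidity $M$ has a unique complete hyperbolic structure, and therefore a well-defined shortest closed geodesic of some length $\ell^{\ast}(M) > 0$. I would then appeal to the quantitative form of Thurston's hyperbolic Dehn surgery theorem due to Hodgson and Kerckhoff: there is a universal constant $L_0$ and a function $\phi$ with $\phi(\ell) \to 0$ as $\ell \to \infty$ such that whenever $\ell(\sigma) > L_0$ the filling $N(\sigma)$ is hyperbolic and the core of the attached solid torus is isotopic to a closed geodesic of length at most $\phi(\ell(\sigma))$. Since that core sits inside $N(\sigma) \cong M$ its length is bounded below by $\ell^{\ast}(M)$, and this forces $\ell(\sigma)$ to be bounded above by some $L = L(M)$. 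Since any horocusp is contained in the maximal one, the resulting bound on length in the maximal horocusp bounds the length in any horocusp.

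The main technical obstacle I anticipate is reconciling the two notions of length that appear in the Hodgson--Kerckhoff estimate: their theorem is customarily phrased in terms of the \emph{normalised} length $\hat\ell(\sigma) = \ell(\sigma)/\sqrt{\operatorname{Area}(\partial N^{\max})}$, whereas we want a bound on the Euclidean length $\ell(\sigma)$ in the maximal horocusp. One must rule out the possibility that a sequence of counterexamples $N_i$ has maximal cusp area growing so fast that $\ell(\sigma_i) \to \infty$ while $\hat\ell(\sigma_i)$ stays bounded. I expect to dispose of this via a geometric compactness argument: along such a sequence one could extract a geometric limit whose cusp is incompatible with the fact that the fillings are all the fixed manifold $M$, which has finitely many closed geodesics of any bounded length and a definite systole.
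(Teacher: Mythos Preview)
Your treatment of the non-hyperbolic case via the 6-theorem is exactly what the paper does.

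For the hyperbolic case you take a genuinely different route. The paper argues via volumes: since hyperbolic volume strictly decreases under filling and the set of hyperbolic volumes is well-ordered, there is an $\varepsilon>0$ with $\vol(N)\geq\vol(M)+\varepsilon$ for every such $N$; the Futer--Kalfagianni--Purcell inequality
\[
\vol(M)\geq\left(1-\left(\frac{2\pi}{\ell(\sigma)}\right)^{2}\right)^{3/2}\vol(N)
\]
then bounds $\ell(\sigma)$ directly in terms of $\vol(M)$ and $\varepsilon$. No normalised lengths enter, so the obstacle you flag never arises.

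Your approach via the systole and Hodgson--Kerckhoff is a reasonable alternative strategy, and you are right to isolate the normalised-length issue as the crux. However, the proposed compactness fix is not convincing as stated. In the regime where $\hat\ell(\sigma_i)$ stays bounded, Hodgson--Kerckhoff gives no control on the core curve, so there is no reason the cores $\gamma_i\subset M$ should be short geodesics (or geodesics at all), and the fact that $M$ has finitely many closed geodesics below any length threshold does not obviously bite. Nor is it clear what geometric limit you intend to extract, or why its cusp structure would be incompatible with the fillings all being $M$. The gap \emph{is} fillable, but the cleanest way is itself a volume argument: if $\hat\ell(\sigma_i)$ stays bounded while $\ell(\sigma_i)\to\infty$, then the cusp area and hence $\vol(N_i)$ tend to infinity, and Futer--Kalfagianni--Purcell forces $\vol(M)\to\infty$, a contradiction. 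At that point one may as well run the paper's argument from the outset and bypass Hodgson--Kerckhoff entirely.
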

\begin{proof}
If $M$ is not a hyperbolic manifold, then the 6-theorem yields the bound $\ell(\sigma)\leq 6$ \cite{Agol2000BoundsI, Lackenby2003Exceptional}. Thus we may assume that $M$ is itself a hyperbolic manifold. Since hyperbolic volume strictly decreases under Dehn filling, we have that $\vol(N)>\vol(M)$ \cite{Thurston_notes}. As the volumes of hyperbolic 3-manifolds form a well-ordered set \cite{BenedettiPetronio}, there exists $\varepsilon>0$ such that $\vol(N)\geq \vol(M)+\varepsilon$. Now suppose that the slope $\sigma$ has length $\ell>2\pi$. By the work of Futer-Kalfagianni-Purcell \cite[Theorem~1.1]{Futer2008Dehn_filling} we have that
\[
\vol(N)\left(1- \left(\frac{2\pi}{\ell}\right)^2\right)^{\frac32}\leq \vol(M).
\]
Thus we have that
\[
\left(1- \left(\frac{2\pi}{\ell}\right)^2\right)^{\frac32}\leq \frac{\vol(M)}{\vol(M)+\varepsilon}
\]
from which follows an upper bound on $\ell$ in terms of $\vol(M)$ and $\varepsilon$ which are independent of $N$.
\end{proof}

We will also make use of the following variation on a result of Lackenby \cite[Theorem~3.1]{Lackenby2019characterizing}. 
\begin{thm}\label{thm:Lackenby_magic}
Let $M$ be $S^3$ or the exterior of an unknot or unlink in $S^3$ and let $J$ be a hyperbolic knot in $M$. Then there is a constant $L(J)$ such that the following property holds. Let $J'\subseteq M$ be a hyperbolic knot and let $\sigma'$ and $\sigma$ be slopes for $J'$ and $J$ respectively such that there is a homeomorphism $f:M_{J'}(\sigma')\rightarrow M_J(\sigma)$. If $\ell_{J'}(\sigma')>L$, then there exists a homeomorphism $f':M_{J'}\rightarrow M_J$ such that $f'|_{\partial M}=f|_{\partial M}$.\qed
\end{thm}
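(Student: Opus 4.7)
The plan is to adapt the argument Lackenby uses to prove his original result in \cite[Theorem~3.1]{Lackenby2019characterizing}, extending it to allow $M$ to have torus boundary. The guiding principle is that hyperbolic Dehn surgery with a very long slope realizes the surgery-dual knot as an extremely short closed geodesic, which must then be preserved by any homeomorphism of the filled manifolds.

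First, since $J$ is hyperbolic and $M$ is either $S^3$ or the exterior of an unknot or unlink, the complement $M_J$ carries a complete hyperbolic structure of finite volume. Let $\epsilon_0>0$ denote the length of its shortest closed geodesic, let $\mu_3$ be a 3-dimensional Margulis constant, and set $\epsilon=\min(\epsilon_0,\mu_3)/2$. By a universal effective hyperbolic Dehn surgery theorem (for instance those of Hodgson--Kerckhoff or Futer--Purcell--Schleimer), there exists $L=L(J)$ so that whenever $\ell_{J'}(\sigma')>L$ the filled manifold $M_{J'}(\sigma')$ is hyperbolic and the core $c'$ of the surgery solid torus is a closed geodesic of length less than $\epsilon$. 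The homeomorphism $f$ carries $c'$ to a closed geodesic $\gamma=f(c')$ of the same length inside $M_J(\sigma)$.

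The next step is a drilling argument. Each of the Margulis tubes around $c'$ and $\gamma$ is disjoint from $\partial M$, and $f$ sends the first tube to the second. Removing the tube around $c'$ recovers $M_{J'}$ (by definition of Dehn filling), while removing the tube around $\gamma$ yields $M_J(\sigma)\setminus\gamma$. Thus $f$ restricts to a homeomorphism $M_{J'}\to M_J(\sigma)\setminus\gamma$ whose restriction to $\partial M$ agrees with $f|_{\partial M}$.

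The main obstacle is then to promote this to a homeomorphism $M_{J'}\to M_J$ that still agrees with $f|_{\partial M}$; equivalently, to show that $\gamma$ is isotopic in $M_J(\sigma)$ to the core $c$ of the solid torus attached during the $\sigma$-filling of $J$. The hard part is the geometric dichotomy for short geodesics in a hyperbolic Dehn filling: every closed geodesic of length at most $\epsilon$ in $M_J(\sigma)$ is either a newly-created surgery core or the geometric continuation of a short geodesic of the cusped manifold $M_J$. Since $\ell(\gamma)<\epsilon\leq \epsilon_0/2$ is strictly less than the shortest geodesic length in $M_J$, the second possibility is ruled out, forcing $\gamma$ to be (isotopic to) the surgery core $c$. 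Composing the drilling homeomorphism with the natural identification $M_J(\sigma)\setminus c\cong M_J$, which is supported inside the Margulis tube and hence fixes $\partial M$, yields the desired $f'\colon M_{J'}\to M_J$ with $f'|_{\partial M}=f|_{\partial M}$. Uniformity of $L$ in $J'$ comes from the fact that the effective Dehn surgery bounds do not depend on the cusped manifold being filled.
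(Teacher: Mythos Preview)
The paper itself omits the argument and simply points to Lackenby's proof of \cite[Theorem~3.1]{Lackenby2019characterizing}, so your sketch is being compared to Lackenby's original reasoning. Your overall strategy is the right one, but there is a genuine gap at the ``geometric dichotomy'' step.

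You assert that every closed geodesic of length at most $\epsilon$ in $M_J(\sigma)$ is either the surgery core $c$ or the geometric continuation of a short geodesic of $M_J$. Statements of this type (from effective bilipschitz drilling/filling, e.g.\ Hodgson--Kerckhoff or Futer--Purcell--Schleimer) compare the geometry of $M_J$ to that of $M_J(\sigma)$ only when the core of the $\sigma$-filling is itself short, equivalently when $\sigma$ is a long slope on $\partial M_J$. But the hypothesis $\ell_{J'}(\sigma')>L$ constrains only the filling of $M_{J'}$; nothing in your setup bounds $\ell_J(\sigma)$ from below. For a short slope $\sigma$ the hyperbolic manifold $M_J(\sigma)$ can have geometry entirely unrelated to that of $M_J$, and in particular may possess a geodesic shorter than $\epsilon_0/2$ which is not the core $c$. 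In that case your argument does not force $\gamma=c$.

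Lackenby's actual argument closes this gap with a finiteness step that your choice of $\epsilon$ omits. By Thurston's hyperbolic Dehn surgery theorem there is a constant $\ell_{\min}=\ell_{\min}(M_J)>0$ with the following trichotomy for every slope $\sigma$: either $M_J(\sigma)$ is not hyperbolic; or it is hyperbolic with systole at least $\ell_{\min}$; or it is hyperbolic and the filling core $c$ is the \emph{unique} closed geodesic of length less than $\ell_{\min}$. (The point is that only finitely many slopes fail the last alternative, and one takes $\ell_{\min}$ below the systoles of those finitely many hyperbolic fillings as well as below the limiting second-shortest length coming from Dehn surgery space.) One then chooses $L$ so that $\ell_{J'}(\sigma')>L$ forces the core $c'$ to have length $<\ell_{\min}$. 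Since $M_{J'}(\sigma')\cong M_J(\sigma)$ is hyperbolic with systole $<\ell_{\min}$, the first two alternatives are excluded, so $c$ is the unique geodesic of length $<\ell_{\min}$ and hence $\gamma=f(c')=c$. Replacing your $\epsilon_0$ by such an $\ell_{\min}$ repairs the proof; everything else in your sketch (Mostow rigidity, the drilling identification, the boundary-preserving extension) is fine.
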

We omit the proof as it follows the one given in \cite{Lackenby2019characterizing} with only cosmetic modifications. The key point is that the condition on $q'$ in \cite[Theorem~3.1]{Lackenby2019characterizing} is simply to ensure that Dehn fillings occur along sufficiently long slopes. Note also that the conclusion of Theorem~\ref{thm:Lackenby_magic} in terms of the homeomorphisms $f$ and $f'$ is simply the result of unpacking the notation $\cong_\partial$ in \cite{Lackenby2019characterizing}. 

\subsection{Surgery and the JSJ decomposition}
We wish to understand how the JSJ decomposition of $S^3_K$ changes under Dehn filling. If the JSJ decomposition of $S_K^3$ takes the form
\[
S_K^3=X_0 \cup X_1 \cup \dots \cup X_n,
\]
where $X_0$ is the outermost piece of this decomposition, then for any $p/q\in \Q$ the surgered manifold $S_K^3(p/q)$ can be decomposed as 
\[
S_K^3(p/q)=X_0(p/q) \cup X_1 \cup \dots \cup X_n,
\]
where $X_0(p/q)$ denotes $X_0$ filled along the slope $p/q$ on $\partial S_K^3\subseteq \partial X_0$. We wish to understand when this decomposition corresponds to the JSJ decomposition of $S_K^3(p/q)$. We do this first when $K$ is a torus knot or a cable knot.

\begin{prop}\label{prop:JSJ_surgery_torus}
Let $K$ be a torus knot in $S^3$. If $|p/q|> 4g(K)+4$, then $S_{K}^3(p/q)$ is a Seifert fibred space with a unique Seifert fibred structure. Moreover, the core of the filling torus can be assumed to be an exceptional fibre of order at least $|p/q|-4g(K)-2$ in this structure.
\end{prop}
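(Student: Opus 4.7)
The plan is to exploit the standard Seifert structure on the torus knot complement and track what happens under Dehn filling. Write $K = T_{a,b}$ with coprime $a, b \geq 2$, so $g(K) = (a-1)(b-1)/2$. Recall that $S^3_K$ is Seifert fibered over the disk with two exceptional fibers of orders $a$ and $b$, and the regular fiber on $\partial S^3_K$ represents the slope $ab$ in the standard meridian-longitude basis. The first step is a direct arithmetic check that $ab \leq 4g(K) + 2$, equivalently $(a-2)(b-2) \geq 0$. This immediately rules out $p/q$ being the fiber slope $ab$ under the hypothesis $|p/q| > 4g(K) + 4$.

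Next, I would invoke the classical extension lemma for Seifert fibrations: since $p/q$ is not the fiber slope, the Seifert fibering of $S^3_K$ extends across the surgery solid torus. This produces a Seifert fibering of $S^3_K(p/q)$ over $S^2$ with three exceptional fibers of orders $a$, $b$, and $|p - abq|$, with the core of the filling solid torus realized as the third fiber. The ``order at least $|p/q|-4g(K)-2$'' claim then reduces to the inequality
\[
|p - abq| \;\geq\; |p| - ab|q| \;=\; |q|\bigl(|p/q| - ab\bigr),
\]
which together with $ab \leq 4g(K) + 2$ gives the desired bound (splitting into the trivial case $|q|=1$ and the case $|q|\geq 2$ if needed, both falling out of the same inequality $(a-2)(b-2) \geq 0$). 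In particular, the core fiber has multiplicity greater than $2$, so it is genuinely exceptional.

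The only nontrivial point is uniqueness of the Seifert fibering. The plan is to quote the classification of closed orientable Seifert fibered manifolds with more than one Seifert fibration: these are $S^3$, lens spaces, $S^2 \times S^1$, prism manifolds (base $S^2$ with exceptional orders $(2,2,n)$), and a small list of flat examples. Our manifold has base $S^2$ with three genuinely exceptional fibers, ruling out the $S^3$, lens space, and $S^2 \times S^1$ cases. Because $\gcd(a,b) = 1$, at most one of $a,b$ equals $2$, and the third order is $>2$, so at most one of the three exceptional multiplicities equals $2$. This rules out the prism manifold case, and the flat examples require specific multiplicities that are incompatible with one multiplicity being large. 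Hence the Seifert fibering is unique.

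The main (mild) obstacle is the uniqueness argument, which depends on invoking the classification of non-unique Seifert fiberings carefully; everything else is a direct calculation with the extension lemma and two applications of $(a-2)(b-2) \geq 0$.
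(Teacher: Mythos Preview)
Your proposal is correct and follows essentially the same route as the paper. Both arguments cite Moser's description of surgery on torus knots (equivalently, the extension lemma you invoke) to obtain the Seifert fibration over $S^2$ with multiplicities $a$, $b$, $|p-abq|$, then use the identity $ab = 4g(K)+2 - (a-2)(b-2) \leq 4g(K)+2$ to bound the new multiplicity from below, and finally argue uniqueness by noting that at most one of the three multiplicities can equal $2$ (since $\gcd(a,b)=1$ and $|p-abq|\geq 3$), which excludes lens spaces and prism manifolds. The paper's proof is slightly terser on the uniqueness step but the content is the same.
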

\begin{proof}
Suppose that $K$ is the $(r,s)$-torus knot, where $|r|>|s|\geq 2$. If $|p-rsq|\geq 2$, then $S_{K}^3(p/q)$ admits the structure of a Seifert fibred space over $S^2$ with exceptional fibres of order $|r|$, $|s|$ and $|p-rsq|$ \cite{Moser1971elementary}. Moreover, this Seifert fibred structure is such that the core of the filling solid torus is an exceptional fibre of $|p-rsq|$.

Since the genus of $K$ is
\[
g(K)=\frac{(|r|-1)(|s|-1)}{2},
\]
we can bound $|p-rsq|$ below as follows:
\begin{align*}
|p-rsq|&\geq |p/q-rs|\\
&\geq |p/q|- |rs|\\
&= |p/q| -4g(K) -2+ (|r|-2)(|s|-2)\\
&\geq |p/q| -4g(K) -2.
\end{align*}

Thus if $|p/q|> 4g(K)+4$, we have that $|p-rsq|> 2$ and so the Seifert fibred structure on $S_{K}^3(p/q)$ is unique since it has at least two exceptional fibres of order three.
\end{proof}
A similar analysis applies to cables.
\begin{prop}\label{prop:JSJ_surgery_cable}
Let $K$ be a non-trivial cable knot in $S^3$ and suppose that the JSJ decomposition of $S_K^3$ is
\[
S_K^3=X_0 \cup X_1 \cup \dots \cup X_n,
\]
where $X_0$ is the outermost piece of this decomposition. If $|p/q|> 4g(K)-4$, then the JSJ decomposition of $S_K^3(p/q)$ is
\begin{equation}\label{eq:cable_decomp}
S_K^3(p/q)=X_0(p/q)\cup X_1 \cup \dots \cup X_n.
\end{equation}
Moreover $X_0(p/q)$ is a Seifert fibred space admitting a unique Seifert fibred structure and the core of the filling torus can be assumed to be an exceptional fibre of order at least $|p/q| - 4g(K) + 6$ in this structure.
\end{prop}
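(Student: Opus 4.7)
The outermost piece $X_0$ of the JSJ decomposition of $S_K^3$ is a cable space: it is Seifert fibred over the annulus with exactly one exceptional fibre, reflecting the fact that $K$ is the $(a,b)$-cable of a non-trivial companion $K'$ with longitudinal winding $|a|\geq 2$ and $\gcd(a,b)=1$. In this fibration the regular fibre on $\partial S_K^3$ represents the cable slope $ab$, and the exceptional fibre has order $|a|$. The plan is to extend the Seifert fibration of $X_0$ across the filling, check that the resulting Seifert fibred space $X_0(p/q)$ admits a unique Seifert fibred structure, and verify that the tori of the old JSJ decomposition remain essential and non-mergeable after the filling.

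The first key step is to bound $\Delta(p/q,ab)=|p-qab|$ from below using the cable genus formula
\[
g(K)=|a|g(K')+\tfrac{1}{2}(|a|-1)(|b|-1),
\]
together with $g(K')\geq 1$. A short arithmetic argument valid for $|a|\geq 2$ yields $|ab|\leq 4g(K)-6$, and combining this with the elementary inequality $|p-qab|\geq |q|(|p/q|-|ab|)$ gives
\[
|p-qab|\geq |p/q|-|ab|\geq |p/q|-4g(K)+6.
\]
Because $|p/q|>4g(K)-4$, this integer distance is at least $3$. In particular the filling slope is not the fibre slope, so the Seifert fibration of $X_0$ extends to a Seifert fibration of $X_0(p/q)$ over the disk with two exceptional fibres: the original one of order $|a|$ and the core of the filling solid torus of order $|p-qab|\geq 3$. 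Since at least one exceptional fibre has order $\geq 3$, $X_0(p/q)$ is neither a solid torus nor the twisted $I$-bundle over the Klein bottle, and therefore admits a unique Seifert fibred structure.

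It remains to identify~\eqref{eq:cable_decomp} as the JSJ decomposition of $S_K^3(p/q)$. Every torus inside $S_K^3(p/q)\setminus X_0(p/q)$ is untouched by the filling and stays essential by the old JSJ decomposition. The torus $T_1=\partial X_0\setminus \partial S_K^3$ is incompressible in the adjacent piece $X_1$ (unchanged) and incompressible in $X_0(p/q)$ because the latter is a Seifert fibred space over the disk with two exceptional fibres of order $\geq 2$; hence $T_1$ is essential in $S_K^3(p/q)$. Extending a Seifert fibration over a Dehn filling does not alter the fibre slope on the other boundary, so the fibre slope of $X_0(p/q)$ on $T_1$ coincides with that of $X_0$. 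Consequently the incompatibility of the Seifert fibrations across $T_1$ (when $X_1$ is Seifert fibred), or the hyperbolicity of $X_1$, is preserved, no adjacent pieces can be merged, and \eqref{eq:cable_decomp} is indeed the JSJ decomposition. The chief technical challenge is the genus arithmetic yielding $|ab|\leq 4g(K)-6$; everything else is standard manipulation of Seifert fibred pieces.
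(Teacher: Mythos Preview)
Your proposal is correct and follows essentially the same approach as the paper: both extend the Seifert fibration of the cable space $X_0$ over the filling, use the cable genus formula together with $g(K')\geq 1$ to show $|ab|\leq 4g(K)-6$ (the paper phrases this as $4g(K)-|rs|\geq (|r|+2)(|s|-2)+6\geq 6$, which is the identical computation), deduce that the core of the filling is an exceptional fibre of order $|p-qab|\geq |p/q|-4g(K)+6\geq 3$, and then argue that the unchanged fibre slope on $T_1$ prevents any merging with $X_1$. Your treatment of the JSJ step is in fact slightly more explicit than the paper's, since you separately note the incompressibility of $T_1$ in $X_0(p/q)$ and handle the hyperbolic-$X_1$ case.
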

\begin{proof}
Suppose that $K$ is the $(r,s)$-cable of a non-trivial knot $J$, where $s\geq 2$ is the winding number.
By hypothesis, the outermost piece $X_0$ in the JSJ decomposition in $S_{K}^3$ is a Seifert fibred space over the annulus with an exceptional fibre of order $s$. Moreover the slope of the regular fibre on the boundary component $\partial S_{K}^3\subset X_0$ is $rs/1$. Thus if $|p-rsq|\geq 2$, then the Seifert fibred structure on $X_0$ extends to a Seifert fibred structure on $X(p/q)$ in which the core of the filling torus is an exceptional fibre of order $|p-rsq|$. Moreover if $|p-rsq|\geq 3$, then this Seifert fibred structure on $X_0(p/q)$ is unique since it is fibred over the disk with two exceptional fibres and at least one of these fibres is of order at least three.

Thus, if $|p-rsq|\geq 3$ we see that the decomposition in \eqref{eq:cable_decomp} separates $S_K^3(p/q)$ into pieces which are all atoroidal or Seifert fibred. Suppose that $X_1$ is the piece in the JSJ decomposition of $S_K^3$ which is adjacent to $X_0$. Since we started with the JSJ decomposition for $S_K^3$, the only way the above decomposition can fail to be the JSJ decomposition for $S_K^3(p/q)$ is if $X_1$ is Seifert fibred and its Seifert fibration can be extended over $X_0(p/q)$. However, this is not possible if $|p-rsq|\geq 3$, since the resulting Seifert fibration on $X_0(p/q)$ is unique and agrees with the original Seifert fibration on $\partial X_0 \setminus \partial S_K^3$. Thus \eqref{eq:cable_decomp} is the JSJ decomposition in this case.

Using the formula for the genus of a satellite knot \cite{Schubert1953knoten}, we have that
\[
g(K)=|s|g(J) +\frac{(|r|-1)(|s|-1)}{2}\geq |s|+\frac{(|r|-1)(|s|-1)}{2},
\]
where the inequality follows from the fact that $J$ is non-trivial and hence satisfies $g(J)\geq 1$. Using this, one obtains the bound
\begin{align*}
|p-rsq|&\geq |p/q| -|rs|\\
 &= |p/q| -4g(K) +4g(K)-|rs|\\
&\geq |p/q| -4g(K) +|rs|-2|r|+2|s|+2\\
&= |p/q| -4g(K) +(|r|+2)(|s|-2)+6\\
&\geq  |p/q| -4g(K) +6.
\end{align*}
Thus if $|p/q|> 4g(K)-4$, then we have that $|p-rsq|>2$ and hence the desired conclusions.
\end{proof}

Incorporating these results with the case of hyperbolic type knots we obtain the following theorem.
\begin{thm}\label{thm:surgery_JSJ}
Let $K$ be a prime knot and let $p/q$ be a slope such that 
\begin{equation}\label{eq:JSJ_thm_bound}
\text{$|p/q|> 4g(K)+4$ and $|p|> 6\sqrt{3}(2g(K)-1)$.}
\end{equation}
If the JSJ decomposition of $S_K^3$ is
\[
S_K^3=X_0 \cup X_1 \cup \dots \cup X_n,
\]
where $X_0$ is the outermost piece of this decomposition, then the JSJ decomposition of $S_K^3(p/q)$ is
\begin{equation}\label{eq:general_decomp}
S_K^3(p/q)=X_0(p/q)\cup X_1 \cup \dots \cup X_n.
\end{equation}
\end{thm}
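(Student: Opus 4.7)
The plan is to perform a case analysis on the outermost JSJ piece $X_0$ of $S_K^3$, using the classification of Theorem~\ref{thm:knot_JSJ_decomp}. Since $K$ is prime, $X_0$ cannot be of the product form $S^1\times F$ with $F$ a planar surface of at least three boundary components, as that would force $K$ to be composite. The remaining possibilities are that $X_0$ is Seifert fibered over a disk with two exceptional fibres (so $K$ is a torus knot), Seifert fibered over an annulus with one exceptional fibre (so $K$ is a cable), or hyperbolic. The torus knot and cable cases follow directly from Propositions~\ref{prop:JSJ_surgery_torus} and~\ref{prop:JSJ_surgery_cable}, respectively, using the bound $|p/q|>4g(K)+4$; note that the torus knot case is trivial because the original JSJ decomposition has only the single piece $X_0$.

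For the hyperbolic case, the idea is to combine Proposition~\ref{prop:length_bound} with the hypothesis $|p|>6\sqrt{3}(2g(K)-1)$ to obtain $\ell_K(p/q)>6$. The 6-theorem of Agol and Lackenby then applies to the hyperbolic piece $X_0$ to show that $X_0(p/q)$ is irreducible, atoroidal, not Seifert fibered, and has incompressible boundary tori; by geometrization it is itself a hyperbolic JSJ piece.

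It then remains to verify that the decomposition \eqref{eq:general_decomp} is the full JSJ decomposition of $S_K^3(p/q)$ rather than a proper refinement or coarsening. Each piece is by construction atoroidal or Seifert fibered, and every internal torus remains incompressible because both adjacent pieces have incompressible boundary (those among $X_1,\dots,X_n$ because they were JSJ pieces of $S_K^3$, and $X_0(p/q)$ by the 6-theorem). Minimality reduces to showing that $X_0(p/q)$ cannot be amalgamated with a neighboring Seifert fibered piece, which is immediate because it is not Seifert fibered; the interior gluings among $X_1,\dots,X_n$ were already minimal in the JSJ decomposition of $S_K^3$. The step I expect to require the most care is verifying essentiality of the JSJ tori across $\partial X_0(p/q)$ and ruling out any Seifert fibration extension in the hyperbolic case, but both follow cleanly from the non-Seifert, atoroidal character of $X_0(p/q)$ delivered by the 6-theorem.
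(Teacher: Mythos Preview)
Your proposal is correct and follows essentially the same approach as the paper: a case split via Theorem~\ref{thm:knot_JSJ_decomp} into torus knot, cable, and hyperbolic type, handling the first two by Propositions~\ref{prop:JSJ_surgery_torus} and~\ref{prop:JSJ_surgery_cable}, and the hyperbolic case by combining Proposition~\ref{prop:length_bound} with the 6-theorem to see that $X_0(p/q)$ stays hyperbolic and hence cannot merge with any adjacent Seifert piece. Your treatment is in fact slightly more explicit than the paper's about incompressibility of the JSJ tori and minimality of the resulting decomposition.
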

\begin{proof}
It follows from Theorem~\ref{thm:knot_JSJ_decomp} that if $K$ is a prime knot, then $K$ is a torus knot, a non-trivial cable or a knot of hyperbolic type. If $K$ is a torus knot, then the statement follows immediately from Proposition~\ref{prop:JSJ_surgery_torus}. If $K$ is a cable knot, then the desired statement forms a part of Proposition~\ref{prop:JSJ_surgery_cable}. Thus suppose that $K$ is of hyperbolic type.
By Proposition~\ref{prop:length_bound}, the hypothesis that $|p|>6\sqrt{3} (2g(K)-1)$ implies that $\ell_K(p/q)>6$ and so by the 6-theorem \cite{Agol2000BoundsI, Lackenby2003Exceptional}, the outermost piece in the JSJ decomposition of $S_{K}^3$ remains hyperbolic, and hence atoroidal, after the Dehn filling. Thus pieces in the decomposition in \eqref{eq:general_decomp} are all atoroidal or Seifert fibred. Moreover, since $X_0(p/q)$ does not admit any Seifert fibred structure, we see that this decomposition is a minimal such decomposition. That is, \eqref{eq:general_decomp} is the JSJ decomposition of $S_K^3(p/q)$.
\end{proof}

\subsection{Proof of Theorem~\ref{thm:bounded_genus}}
We are now ready for the main result of this section.
\begin{thm}\label{thm:bounded_genus}
\thmboundedgenus{}
\end{thm}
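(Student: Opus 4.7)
The approach is to exploit the uniqueness of the JSJ decomposition of the common surgery manifold $S^3_K(p/q) \cong S^3_{K'}(p/q)$. We may restrict attention to slopes with $|p/q|$ and $|p|$ exceeding the thresholds of Theorem~\ref{thm:surgery_JSJ}, as outside this regime the bound $|p| \leq M|q|$ is immediate with $M$ depending only on $g(K)$. Because $g(K') \leq g(K)$, Theorem~\ref{thm:surgery_JSJ} applies simultaneously to $K$ and $K'$, yielding two JSJ decompositions
\[
X_0(p/q) \cup X_1 \cup \dots \cup X_n \;=\; S^3_K(p/q) \;\cong\; S^3_{K'}(p/q) \;=\; X'_0(p/q) \cup X'_1 \cup \dots \cup X'_m.
\]
By uniqueness of the JSJ, these two decompositions agree up to a bijection of pieces, so in particular the distinguished filled piece $X_0(p/q)$ is homeomorphic to some $Y \in \{X'_0(p/q), X'_1, \ldots, X'_m\}$.

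The argument then splits by the type of $K$ provided by Theorem~\ref{thm:knot_JSJ_decomp}. When $K$ is a torus knot or a non-trivial cable, Propositions~\ref{prop:JSJ_surgery_torus} and~\ref{prop:JSJ_surgery_cable} endow $X_0(p/q)$ with a unique Seifert fibration carrying an exceptional fibre whose order grows linearly in $|p|$; this invariant transfers to $Y$. In the subcase $Y = X'_0(p/q)$, the analogous formula for $K'$ with Seifert parameters controlled by $g(K') \leq g(K)$ forces $|p|$ to lie within a constant multiple of $|q|$. In the subcase $Y = X'_j$ with $j\geq 1$, the target $Y$ is an inner JSJ piece of $S^3_{K'}$ whose Seifert invariants are fixed by $K'$, again bounding the order of the distinguished fibre and hence $|p|$. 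When $K$ is of hyperbolic type, the piece $X_0(p/q)$ is hyperbolic by the 6-theorem (since Proposition~\ref{prop:length_bound} gives $\ell_K(p/q) > 6$). If $Y = X'_j$ is an inner piece of $S^3_{K'}$, then Lemma~\ref{lem:fixed_JSJ_piece} applied to this fixed target bounds $\ell_K(p/q)$, and Proposition~\ref{prop:length_bound} converts this into a bound on $|p|$. If $Y = X'_0(p/q)$, Theorem~\ref{thm:Lackenby_magic} promotes the filling homeomorphism to a homeomorphism $X_0 \to X'_0$; comparing the filling slope against the meridian, which is intrinsically distinguished in each outermost piece by the topological type of its meridional filling via Theorem~\ref{thm:knot_JSJ_decomp}(3), pins down $|p|$ up to bounded error.

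The central difficulty is the subcase where $X_0(p/q)$ is matched to an inner piece $X'_j$ of a knot $K'$ whose internal JSJ structure is a priori uncontrolled. To invoke Lemma~\ref{lem:fixed_JSJ_piece} with a single target depending only on $K$, one must show that the collection of inner JSJ pieces arising among prime knots $K'$ of genus at most $g(K)$ falls into a finite family governed by $g(K)$. The key ingredient is to push a minimal-genus Seifert surface into the deeper JSJ components (in the spirit of Proposition~\ref{prop:hyperbolic_surface}) so that Seifert genus controls the complexity of each inner piece. A parallel subtlety in the hyperbolic case is the identification of meridians across the homeomorphism $X_0 \cong X'_0$, which is resolved by the intrinsic characterization of the meridian in Theorem~\ref{thm:knot_JSJ_decomp}(3). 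Combining these case analyses with Sorya's Theorem~\ref{thm:Sorya1}, which furnishes $|q| \leq Q(K)$, aggregates everything into a single bound $|p| \leq M(K)|q|$.
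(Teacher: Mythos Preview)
Your overall strategy has the right shape, but there is a genuine gap in the ``central difficulty'' you identify, and your proposed resolution does not work. You attempt to locate $X_0(p/q)$ (the filled piece coming from the \emph{fixed} knot $K$) among the JSJ pieces of $S^3_{K'}(p/q)$, which forces you to control the inner JSJ pieces $X'_1, \ldots, X'_m$ of the \emph{unknown} knot $K'$. Your suggested fix --- that the inner JSJ pieces of prime knots of genus at most $g(K)$ form a finite family governed by $g(K)$ --- is false. For instance, the $(3,2)$-cables of the infinitely many pairwise non-isotopic genus-one hyperbolic knots are all prime knots of genus $3$, and their inner JSJ pieces are the complements of those companion knots, which are pairwise non-homeomorphic by Gordon--Luecke. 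Thus no constant $L$ depending only on $g(K)$ can serve as a uniform bound in Lemma~\ref{lem:fixed_JSJ_piece} over all such targets.

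The paper sidesteps this by reversing the roles: it asks where the filled piece $X'_0(p/q)$ coming from $K'$ lands among the JSJ pieces of $S^3_K(p/q)$. Now the inner pieces $X_1, \ldots, X_n$ belong to the fixed knot $K$, so the constants $L_1$ (from Lemma~\ref{lem:fixed_JSJ_piece} applied to each $X_i$) and $\beta$ (the maximal order of an exceptional fibre among the Seifert $X_i$) genuinely depend only on $K$. One then uses $g(K') \leq g(K)$ together with Propositions~\ref{prop:length_bound}, \ref{prop:JSJ_surgery_torus} and~\ref{prop:JSJ_surgery_cable} to show that $X'_0(p/q)$ is too ``large'' --- either hyperbolic arising from a filling of length $> L_1$, or Seifert with an exceptional fibre of order $> \beta$ --- to match any $X_i$ with $i \geq 1$. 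Hence $X'_0(p/q) \cong X_0(p/q)$. A secondary point: in this last step the paper does not try to bound $|p|$ at all. Instead it upgrades the homeomorphism $X'_0(p/q) \cong X_0(p/q)$ to a homeomorphism $S^3_{K'} \cong S^3_K$ (via Theorem~\ref{thm:Lackenby_magic} in the hyperbolic case, with the length hypothesis on the $K'$ side; or by identifying the unique high-order exceptional fibres and hence the filling cores in the Seifert case) and invokes Gordon--Luecke to conclude $K \simeq K'$, contradicting the hypothesis. Your attempt to instead ``pin down $|p|$'' via a meridian comparison is not the right mechanism here, and the appeal to Theorem~\ref{thm:Sorya1} is unnecessary.
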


\begin{proof}
\setcounter{claim}{0}
The constant $M$ will be chosen using the JSJ decomposition of the knot complement $S_{K}^3$. Suppose that $S_{K}^3$ has a JSJ decomposition of the form
\[
S_{K}^3=X_0\cup \dots \cup X_n,
\]
where $X_0$ is the outermost piece. We choose constants, $L_0$, $L_1$ and $\beta$ to satisfy the following properties. 
\begin{itemize}
\item If $X_0$ is hyperbolic, then Theorem~\ref{thm:knot_JSJ_decomp} shows that $X_0$ is the complement of a knot in a (possibly trivial) connected sum of solid tori. Thus if $X_0$ is hyperbolic, we can choose $L_0$ to satisfy the conclusions of Theorem~\ref{thm:Lackenby_magic}. If $X_0$ is not hyperbolic, then we simply take $L_0=6$.
\item Applying Lemma~\ref{lem:fixed_JSJ_piece} to each of the $X_i$ for $i=1,\dots, n$, shows that we may pick $L_1\geq 6$ such that none of $X_1, \dots, X_n$ can be obtained by Dehn filling a hyperbolic manifold along a slope of length greater than $L_1$.
\item We choose $\beta\geq 0$ such that for any Seifert fibred $X_i$ all exceptional fibres in $X_i$ have order at most $\beta$.
\end{itemize}
We will show that the constant
 \[M(K)=\max \{\sqrt{3} L_0 (2g(K)-1),\sqrt{3} L_1 (2g(K)-1), 4g(K)+ 4+\beta \}\]
 satisfies the conclusions of the theorem.

Thus suppose that $K'$ is a prime knot with $g(K')\leq g(K)$ such that $S_{K}^3(p/q) \cong S_{K'}^3(p/q)$ for some slope $p/q\in \Q$ satisfying $|p/q|>M(K)$. We will show that $K'$ is isotopic to $K$.

In particular, the slope $p/q$ satisfies the following bounds:
\begin{enumerate}[(i)]
\item\label{it:p_6_bound} $|p|> 6\sqrt{3}(2g(K)-1)$,
\item\label{it:p_L_bound} $|p|> \sqrt{3} \max\{L_0,L_1\}(2g(K)-1)$ and
\item\label{it:pq_fibre_bound} $|p/q|> 4g(K)+ 4+\beta$.
\end{enumerate}
Let
\[
S_{K'}^3=X'_0\cup \dots \cup X'_{n'},
\]
be the JSJ decomposition for $S_{K'}^3$ where $X_0'$ is the outermost piece.
Since $g(K)\geq g(K')$, bounds \eqref{it:p_6_bound} and \eqref{it:pq_fibre_bound} show that \eqref{eq:JSJ_thm_bound} of Theorem~\ref{thm:surgery_JSJ} is satisfied for both $K$ and $K'$.

Thus the JSJ decompositions of $S_{K}^3(p/q)$ and $S_{K'}^3(p/q)$ are given by 
\[
S_{K}^3(p/q)=X_0(p/q)\cup X_1\cup \dots \cup X_{n}
\]
and
\[
S_{K'}^3(p/q)=X'_0(p/q)\cup X'_1\cup \dots \cup X'_{n'},
\]
respectively. Furthermore, notice that $X_0(p/q)$ is a hyperbolic manifold if and only if $K$ is of hyperbolic type and that $X_0(p/q)$ is a Seifert fibred space otherwise. Likewise, $X_0'(p/q)$ is a hyperbolic manifold if and only if $K'$ is of hyperbolic type.
\begin{claim}\label{claim:JSJ_thm1}
The JSJ piece $X'_0(p/q)$ is not homeomorphic to $X_i$ for any $i=1,\dots, n$.
\end{claim}
\begin{proof}[Proof of Claim]
If $K'$ is of hyperbolic type, then the bound \eqref{it:p_L_bound} and Proposition~\ref{prop:length_bound} imply that $\ell_{K'}(p/q)> L_1$. By choice of $L_1$ we see that $X_0'(p/q)$ is not homeomorphic to any of the $X_i$. If $K'$ is a torus knot, then $X_0'(p/q)$ is a closed manifold so certainly not homeomorphic to any of the $X_i$, which all have non-empty boundary. If $K'$ is a cable knot, then \eqref{it:pq_fibre_bound} implies, via Proposition~\ref{prop:JSJ_surgery_cable} that $X_0'(p/q)$ is a Seifert fibred space over the disk with two exceptional fibres, one of which has order at least $\beta+10$. However, the Seifert fibred $X_i$ contain only exceptional fibres of order at most $\beta$.
\end{proof}

Let 
\[
f:S_{K'}^3(p/q) \rightarrow S_{K}^3(p/q)
\]
be an orientation-preserving homeomorphism. Since the JSJ decomposition of a 3-manifold is unique up to isotopy, we may assume that $f$ restricts to give a homeomorphism between $X'_{0}(p/q)$ and a JSJ piece of $S_{K}^3(p/q)$. Given Claim~\ref{claim:JSJ_thm1}, this means that $f$ restricts to give a homeomorphism between $X_0'(p/q)$ and $X_{0}(p/q)$. 

If $K$ is of hyperbolic type, then we see that $K'$ must also be of hyperbolic type. Together \eqref{it:p_L_bound} and Proposition~\ref{prop:length_bound} imply that $\ell_{K'}(p/q)> L_0$. However $L_0$ is a constant for $X_0$ satisfying the conclusions of Theorem~\ref{thm:Lackenby_magic} and so we see that there is a homeomorphism $f': X_0' \rightarrow X_0$ which agrees with $f$ on $\partial X_0'(p/q)$. Thus we can define a homeomorphism $F: S_{K'}^3\rightarrow S_{K}^3$ by
\[
F(x)=\begin{cases}
f(x) &x\in X_1' \cup \dots \cup X_n'\\
f'(x) &x\in X_0'.
\end{cases}
\]

The knot complement theorem of Gordon and Luecke shows that $K$ and $K'$ are isotopic \cite{Gordon1989complements}.

If $X_0$ is a Seifert fibred space (i.e if $K$ is a cable knot or a torus knot), then Proposition~\ref{prop:JSJ_surgery_torus} and Proposition~\ref{prop:JSJ_surgery_cable} combined with the bound  \eqref{it:pq_fibre_bound} imply that $X_0(p/q)$ admits a Seifert fibred structure where the core of the filling torus is an exceptional fibre of order at least $\beta +2$. This implies that $X_0'$ must also be Seifert fibred and, by similar logic, that $X_0'(p/q)$ is Seifert fibred with the core of the filling torus being an exceptional fibre of order at least $\beta +2$. However, the Seifert fibred structure on $X_0(p/q)$ is unique up to isotopy and contains a unique exceptional fibre of order greater than $\beta$. Thus we may isotope $f$ so that it carries the core of the filling torus in $X_0'(p/q)$ to the core of the filling torus in $X_0(p/q)$. Thus $f$ restricts to a homeomorphism between the knot complements $S_{K'}^3$ and $S_{K}^3$. Again the knot complement theorem implies that $K$ and $K'$ are isotopic \cite{Gordon1989complements}. This completes the proof.

\end{proof}

\section{Knot Floer homology and Property~\spliff{}}\label{sec:spaff}
In this section, we discuss the knot Floer homology necessary to define Property~\spliff{} and to explore its properties. We assume familiarity with the basic features of knot Floer homology and Heegaard Floer homology. For convenience we will work throughout with coefficients in $\F=\Z/2\Z$. We begin with some notation and conventions regarding $\Z$-graded $\F[U]$-modules. The variable $U$ will always be assumed to have degree $-2$. We will use $\F_{(d)}$ to denote the graded $\F[U]$-module comprising a copy of the field $\F$ in grading $d$ and $U$-action satisfying $U\F_{(d)}=0$. A $\Z$-graded $\F[U]$-module $M$ \emph{admits an $\F$-factor} if it decomposes as a direct sum of graded $\F[U]$-modules $M\cong M'\oplus \F_{(d)}$ for some $d\in \Z$. 

\begin{defn}\label{def:spaff2}
Let $M$ be a $\Z$-graded $\F[U]$-module. We say that $M$ has \emph{split $\F$-factors} (Property~\spliff{}) if it decomposes as a direct sum of $\F[U]$-modules 
\[M\cong M'\oplus \F^{n_1}_{(d_1)} \oplus \F^{n_2}_{(d_2)},\]
for integers $n_1,n_2\geq 0$ and gradings $d_1, d_2$ where $d_1$ is odd and $d_2$ is even and the module $M'$ does not admit any $\F$-factors.
\end{defn}

A module $M$ admits an $\F$-factor if and only if it contains an element $x\in M$ such that $Ux=0$ and $x\neq Uy$ for all $y\in M$. Thus the condition that the module $M$ has Property~\spliff{} can be alternatively formulated as saying that we have an isomorphism of graded $\F[U]$-modules
\[
\frac{\ker U}{\ker U \cap \mathrm{im}\, U}\cong \F^{n_1}_{(d_1)} \oplus \F^{n_2}_{(d_2)}
\]
where $n_1,n_2\geq 0$, $d_1$ is odd, $d_2$ is even and we are considering $U$ as a map $U:M\rightarrow M$.

We will use $\tp_{(d)}$ to denote the $\F[U]$-module
 \[\tp_{(d)}=\F[U,U^{-1}]/U\F[U]\]
 where the element $1\in \F[U^{-1}]$ is in grading $d\in \Q$. Note that $\tp_{(d)}$ has Property~\spliff{}. When the absolute grading is not relevant will omit the subscript and simply denote this module by $\tp$.

\subsection{Knot Floer homology}
Recall that associated to a knot $K$ in $S^3$, there is the knot Floer complex $CFK^\infty(K)$ which takes the form of a bifiltered chain complex
\[CFK^\infty(K)=\bigoplus_{i,j\in \Z}C\{(i,j)\},\]
whose filtered chain homotopy type is an invariant of $K$. Furthermore, after a suitable chain homotopy, one can assume
\begin{equation}\label{eq:reduced_complex}
C\{(i,j)\}\cong \hfkhat_{*-2i}(K,j-i)
\end{equation}
for all $i,j$ \cite[Reduction Lemma]{Hedden2018botany}.
There is also a natural chain complex isomorphism
\[U: CFK^\infty(K)\longrightarrow CFK^\infty(K),\]
which acts by translating $C\{(i,j)\}$ to $C\{(i-1,j-1)\}$ and lowering the homological grading by 2. This gives $CFK^\infty(K)$ the structure of a finitely-generated $\F[U,U^{-1}]$-module.

For each $k\in \Z$, we take $A_k^+$ to be the quotient complex 
\[A_k^+=C\{i\geq 0 \text{ or } j\geq k\}.\]
We obtain the $\Z$-graded $\F[U]$-module $\Abold_k$ by taking homology $\Abold_k=H_*(A_k^+)$. This module admits a decomposition
\[
\Abold_k\cong \AT_k \oplus \ared{k},
\]
where $\AT_k$ is the submodule $\AT_k=U^N\Abold_k$ for all sufficiently large $N$ and $\ared{k}\cong \Abold_k/\AT_k$.

Let  $B^+$ denote the quotient complex
\[B^+=C\{i\geq 0\}\]
and let $\Bbold=H_*(B^+)$ denote its homology. Since we are working in $S^3$ we have
\[\Bbold\cong \hfp(S^3)\cong \tp_{(0)}.\]
These complexes admit chain maps
\[v_k,h_k \colon A_k^+ \longrightarrow B^+,\]
where $v_k$ is the obvious vertical projection and $h_k$ consists of the composition of a horizontal projection onto $C\{j\geq k\}$, multiplication by $U^k$ and a chain homotopy equivalence. We will use $\vbold_k$ and $\hbold_k$ to denote the maps induced on homology by $v_k$ and $h_k$, respectively. Both $\vbold_k$ and $\hbold_k$ are surjective on homology. Note that $\vbold_k$ preserves the homological grading and that $\hbold_k$ sends an element of grading $x$ to an element of grading $x-2k$ in $\Bbold$. Since the map $\vbold_k$ is surjective, we see that for each $k$ there is an integer $V_k\geq 0$ such that $\AT_k\cong \tp_{(-2V_k)}$. It is not hard to see that the $V_k$ are precisely the sequence of integers defined by Ni and Wu \cite{Ni2015cosmetic}. This sequence is eventually zero and $\nu^+(K)$ is defined to be the minimal $k$ such that $V_k=0$. An important property of $\nu^+$ is that it forms a lower bound for the smooth ball genus \cite{Hom2016refinement}. Consequently, we have that $\nu^+(K)\leq g(K)$, an inequality that we will use frequently throughout this section.

Definition~\ref{def:spaff1} says that a knot $K$ has Property~\spliff{} if and only if the graded $\F[U]$-modules $\Abold_k$ all have Property~\spliff{}. We will make use of the following reformulation of Property~\spliff{} for knots.

\begin{prop}
Let $K$ be a knot in $S^3$. Then $K$ has Property~\spliff{} if and only if $\ared{k}$ has Property~\spliff{} for all $k\geq 0$.
\end{prop}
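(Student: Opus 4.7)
The plan is to reduce Property \spliff{} of the module $\Abold_k$ to Property \spliff{} of its reduced part $\ared{k}$, and then to use the conjugation symmetry of knot Floer homology to halve the range of $k$ from $\Z$ to $\{k\geq 0\}$.

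For the first reduction, I would use the splitting $\Abold_k\cong \AT_k\oplus \ared{k}$ with $\AT_k\cong \tp_{(-2V_k)}$. The tower $\tp_{(d)}$ admits no $\F$-factors: its $U$-kernel is spanned by the top generator, which already lies in the image of $U$. Now I invoke the reformulation of Property \spliff{} stated just before the proposition: a module $M$ has Property \spliff{} if and only if $\ker U/(\ker U\cap \mathrm{im}\, U)\cong \F^{n_1}_{(d_1)}\oplus \F^{n_2}_{(d_2)}$ with $d_1$ odd and $d_2$ even. This invariant is additive under direct sums, because both $\ker U$ and $\mathrm{im}\, U$ decompose summand-wise. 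Since the tower contributes zero to this quotient, the invariant of $\Abold_k$ coincides with the invariant of $\ared{k}$, and consequently $\Abold_k$ has Property \spliff{} if and only if $\ared{k}$ does.

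For the second reduction, I would invoke the conjugation symmetry of $CFK^\infty(K)$, which interchanges the $i$- and $j$-filtrations. Composed with multiplication by $U^k$, it yields a chain homotopy equivalence $A_k^+\simeq A_{-k}^+$ up to an overall grading shift by $-2k$. Because this shift is even, it preserves the parity of every grading, so the parity constraints in Property \spliff{} are automatically maintained; that is, $\ared{k}$ has Property \spliff{} if and only if $\ared{-k}$ does. Moreover, whenever $|k|\geq g(K)$ the map $\vbold_k$ or $\hbold_k$ is a chain homotopy equivalence, so $\Abold_k\cong \tp$ and $\ared{k}=0$, which vacuously has Property \spliff{}. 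Thus Property \spliff{} holding for every $\ared{k}$ with $k\geq 0$ is equivalent to it holding for every $k\in \Z$.

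Combining the two reductions completes the proof: the forward direction of the proposition is immediate from the first step restricted to $k\geq 0$, and the converse follows by first propagating the property to $k<0$ via the symmetry, then promoting from $\ared{k}$ to $\Abold_k$. The only point requiring care is verifying that the grading shift in the conjugation symmetry is even, which follows because it is a multiple of $2k$; with this in hand, the argument amounts to tracking parities through the decomposition.
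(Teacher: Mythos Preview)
Your proof is correct and follows essentially the same approach as the paper: reduce $\Abold_k$ to $\ared{k}$ via the tower splitting (the tower contributing no $\F$-factors), then use the conjugation symmetry $\ared{k}\cong \ared{-k}[-2k]$ to pass from $k\geq 0$ to all $k$. If anything you are more careful than the paper, which relegates the crucial observation that the grading shift is even (and hence parity-preserving) to a footnote; your remark about $\ared{k}=0$ for $|k|\geq g(K)$ is true but unnecessary here.
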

\begin{proof}
The module $\Abold_k$ admits a decomposition
\[
\Abold_k \cong \AT_k \oplus \ared{k}
\] where $\AT_k$, being isomorphic to $\tp$, does not contain any $\F$-summands. Thus the module $\Abold_k$ has Property~\spliff{} if and only if $\ared{k}$ has Property~\spliff{}. One implication of the proposition follows immediately. Conversely suppose that $\ared{k}$ has Property~\spliff{} for all $k\geq 0$.  Since $\ared{k}$ is isomorphic to $\ared{-k}$ up to an overall grading shift\footnote{The precise grading shift is not relevant for our purposes, but can be easily computed. For $k\geq 0$ the isomorphism $\ared{k}\rightarrow\ared{-k}$ lowers the absolute grading by $2k$.} \cite[Lemma~2.3]{Hom2015reducible}, this implies implies that $\ared{k}$ and hence $\Abold_k$ has Property~\spliff{} for all $k$.
\end{proof}
In fact, when it comes to verifying Property~\spliff{}, it is necessary only to consider $\ared{k}$ for $k$ in the range $0\leq k\leq g(K)-1$.

\begin{prop}\label{prop:ared_properties}
Let $K$ be a knot in $S^3$ with genus $g=g(K)$.
\begin{enumerate}[(i)]
\item\label{it:ared=0} For all $|k|\geq g$, we have $\ared{k}=0$.
\item\label{it:ared_g-1_calc} If $\nu^+(K)< g(K)$ then $\ared{g-1}$ and $\ared{1-g}$ are non-trivial and their $\F[U]$-module structure is the one satisfying $U  \ared{g-1} =U  \ared{1-g}=0$. Furthermore, as a graded $\F$-vector space, there is an isomorphism $\ared{g-1}\cong \hfkhat_{*+2}(K,g)$.

\end{enumerate}
\end{prop}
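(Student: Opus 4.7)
The plan is to exploit the support bound on $CFK^\infty(K)$ coming from the reduction lemma \eqref{eq:reduced_complex}: $C\{(i,j)\}$ vanishes whenever $|j-i|>g$. Part (i) is then essentially by inspection. When $k\geq g$, the region $\{j\geq k,\ i<0\}$ is disjoint from the support, so $A_k^+=C\{i\geq 0 \text{ or } j\geq k\}=B^+$ and hence $\Abold_k\cong \tp_{(0)}$, giving $\ared{k}=0$. The case $k\leq -g$ then follows from the grading-shifted isomorphism $\ared{k}\cong \ared{-k}$ recalled in the preceding proposition.

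For part (ii), I will analyse the short exact sequence of chain complexes
\[0 \to C\{i<0,\ j\geq g-1\} \to A_{g-1}^+ \xrightarrow{v_{g-1}} B^+ \to 0.\]
The first key observation is that the support of $C\{i<0,\ j\geq g-1\}$ intersected with $\{|j-i|\leq g\}$ consists of the single lattice point $(-1,g-1)$, where $j-i=g$. By the reduction lemma this subcomplex is concentrated at one point with chain group $\hfkhat_{*+2}(K,g)$, and the restricted differential is automatically zero for degree reasons, so its homology is $\hfkhat_{*+2}(K,g)$ as a graded $\F$-vector space. The second observation is that $U$ sends $(-1,g-1)$ to $(-2,g-2)$, which is outside the defining region of $A_{g-1}^+$; consequently $U$ acts as zero on this subcomplex and hence on its homology.

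The hypothesis $\nu^+(K)<g(K)$ is equivalent to $V_{g-1}=0$, which means $\vbold_{g-1}$ restricts to an isomorphism $\AT_{g-1}\xrightarrow{\sim}\Bbold$. This forces the decomposition $\Abold_{g-1}=\AT_{g-1}\oplus \ared{g-1}$ to identify $\ared{g-1}$ with $\ker \vbold_{g-1}$. Combining this with the long exact sequence in homology (whose connecting map vanishes since $\vbold_{g-1}$ is surjective), one obtains $\ared{g-1}\cong H_*(C\{i<0,\ j\geq g-1\})\cong \hfkhat_{*+2}(K,g)$, with trivial $U$-action. Non-triviality of $\ared{g-1}$ then follows from the Ozsv\'ath--Szab\'o genus detection result $\hfkhat(K,g)\neq 0$, and the conclusion for $\ared{1-g}$ is deduced via the symmetry $\ared{1-g}\cong \ared{g-1}$. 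The main technical point requiring care is tracking the grading shifts in the reduction lemma to confirm that the identification $\ared{g-1}\cong \hfkhat_{*+2}(K,g)$ holds on the nose rather than merely up to some unspecified shift.
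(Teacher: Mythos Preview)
Your proposal is correct and follows essentially the same route as the paper: both parts reduce to the symmetry $\ared{k}\cong\ared{-k}$, part (i) uses that $A_k^+=B^+$ once $k\geq g$, and part (ii) analyses the same short exact sequence $0\to C\{i\leq -1,\ j\geq g-1\}\to A_{g-1}^+\to B^+\to 0$, identifies the left-hand complex with the single bigrading $C\{(-1,g-1)\}\cong\hfkhat_{*+2}(K,g)$, and uses $V_{g-1}=0$ to match $\ker\vbold_{g-1}$ with $\ared{g-1}$. The only cosmetic differences are that the paper invokes the genus detection theorem via the references \cite{Ghiggini2008fibredness, Ni2007fibredness} rather than attributing it to Ozsv\'ath--Szab\'o, and that the symmetry $\ared{k}\cong\ared{-k}$ is cited from \cite{Hom2015reducible} rather than from the preceding proposition.
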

\begin{proof}
We establish the results for $k\geq g$ and $\ared{g-1}$. This is sufficient to establish the results for $k\leq -g$ and $\ared{1-g}$, since up to an overall grading shift there is an isomorphism $\ared{k}\cong\ared{-k}$ for all $k$ \cite[Lemma~2.3]{Hom2015reducible}. First note that if $k\geq g$, then the complexes $A^+_k$ and $B^+$ are in fact equal. Thus we have that $\Abold_k\cong \hfp(S^3)\cong \tp_{(0)}$. This implies that $\ared{k}=0$ for $k\geq g$.
 
Let $D$ be the subquotient complex of $CFK^\infty$ given by $C\{i\leq -1\,\text{and}\, j\geq g-1\}$. We have the short exact sequence of chain complexes which is invariant under the $U$-action:
\[0\longrightarrow D \longrightarrow A^+_{g-1} \overset{\vbold_{g-1}}\longrightarrow B^+ \rightarrow 0.
\]
As $\vbold_{g-1}$ is surjective on homology, the exact triangle induced by this sequence gives a short exact sequence of $\F[U]$-modules
\[0\longrightarrow H_*(D) \longrightarrow \Abold_{g-1} \overset{\vbold_{g-1}}\longrightarrow \Bbold \rightarrow 0.
\]
Thus we see that $H_*(D)$ is isomorphic to $\ker \vbold_{g-1}$. Since $\nu^+(K)<g$ we have that $V_{g-1}=0$ and so $\ker \vbold_{k}$ is isomorphic to $\ared{g-1}$.

However the complex $D$ is supported in a single bigrading:
\[
D=C\{(-1,g-1)\}\cong \hfkhat_{*+2}(K,g).
\]
with trivial differential. Thus $\ared{g-1}\cong H_*(D)\cong \hfkhat_{*+2}(K,g)$ and $U\ared{g-1}=0$. Since knot Floer homology detects the genus \cite{Ghiggini2008fibredness, Ni2007fibredness}, we have that $\hfkhat(K,g)$ is non-trivial.
\end{proof}
We can use Proposition~\ref{prop:ared_properties} to exhibit knots which do not have Property~\spliff{}. By Proposition~\ref{prop:ared_properties}\eqref{it:ared_g-1_calc}, any knot for which $\nu^+(K)<g(K)$ and $\hfkhat(K,g)$ is supported in at least three distinct gradings does not have Property~\spliff{}.
\begin{exam}
The knot $K=11n34\# 11n34$ does not satisfy Property~\spliff{}. The knot $11n34$ has genus $g(11n34)=3$ and
\[
\hfkhat(11n34, 3)\cong \F_{(3)}\oplus \F_{(4)}.
\]
By additivity of the Seifert genus and the K{\"u}nneth formula for knot Floer homology, we have $g(K)=6$ and
\[
\hfkhat(K, 6)\cong \F_{(6)}\oplus \F_{(7)}^2 \oplus \F_{(8)}.
\]
Since
\[\nu^+(K)\leq g_4(K)\leq 2g_4(11n34)\leq 2<g(K)=6,\]
one may apply Proposition~\ref{prop:ared_properties}\eqref{it:ared_g-1_calc} to obtain
\[\ared{5}(K)\cong \F_{(4)}\oplus \F_{(5)}^2 \oplus \F_{(6)},\]
showing that $K$ does not have Property~\spliff{}. 
\end{exam}

\begin{rem}
Using further properties of the $V_k$, one can generalize the calculation of $\ared{g-1}$ in Proposition~\ref{prop:ared_properties}\eqref{it:ared_g-1_calc} to calculate $\ared{g-1}$ when $\nu^+(K)=g(K)$. When $\nu^+(K)=g(K)$, the module structure on $\ared{g-1}$ again satisfies $U \ared{g-1}=0$, but as a graded vector space the structure on $\ared{g-1}$ satisfies
 \[\F_{(-2)}\oplus \ared{g-1}\cong \hfkhat_{*+2}(K,g).\]
\end{rem}

\subsection{The mapping cone formula}
In order to describe the Heegaard Floer homology of $S^3_{K}(p/q)$, one needs a way to label its \spinc-structures. This labelling takes the form of an affine bijection defined in terms of relative \spinc-structures on $S^3\setminus \nu K$ \cite{Ozsvath2011rationalsurgery}:
\begin{equation}\label{eq:spinccorrespondence}
\phi_{K,p/q} \colon \Zp \longrightarrow \spinc(S_{K}^3(p/q)).
\end{equation}

In general, if we have two surgery descriptions for a manifold
\[Y\cong S_{K}^3(p/q)\cong S_{K'}^3(p/q)\cong Y,\]
then $\phi_{K,p/q}$ and $\phi_{K',p/q}$ will result in different labellings on $\spinc(Y)$. However, by using the $d$-invariants one can show that for $p$ sufficiently large relative to $q$ these labellings will be the same up to conjugation. The following lemma gives a precise quantification of what sufficiently large means in this context.

\begin{lem}[Lemma~3.3, \cite{McCoy2020torus_char}]\label{lem:labellingbound}
Let $K$ and $K'$ be knots in $S^3$ such that $S_{K}^3(p/q)\cong S_{K'}^3(p/q)$ for some $p/q\in \Q$. If $p,q>0$ satisfy
\[p\geq 4q\nu^+(K)+4q^2-2q+12,\]
then up to conjugation of \spinc-structures the labelling maps satisfy $\phi_{K,p/q}=\phi_{K',p/q}$. \qed
\end{lem}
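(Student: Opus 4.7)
My plan is to leverage Heegaard Floer correction terms ($d$-invariants) to pin down the $\spinc$-labelling. The labelling $\phi_{K,p/q}$ is defined so that, for each $i \in \Zp$, the corresponding $\spinc$-structure on $S^3_K(p/q)$ inherits a concrete description via the mapping cone formula. For appropriate $i$, the $d$-invariants of $p/q$-surgery along $K$ are then computed by a formula of Ni--Wu and Ozsv\'ath--Szab\'o of the shape
\[
d(S^3_K(p/q), \phi_{K,p/q}(i)) - d(L(p,q), i) = -2 \max\bigl(V_{k(i)},\, V_{k'(i)}\bigr),
\]
where $k(i), k'(i)$ are certain integers determined by $i, p, q$, and where the role of the horizontal maps is captured by $H_k = V_{-k}$ via the symmetry of $CFK^\infty$.

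The second step is to exploit the structure of the sequence $\{V_k\}$: it is a non-increasing sequence of non-negative integers with $V_{k+1} \leq V_k \leq V_{k+1}+1$, vanishing exactly for $k \geq \nu^+(K)$. Consequently the multiset of residues $i$ for which the $d$-invariant deficit is non-zero is of size bounded in terms of $\nu^+(K)$ alone. The hypothesis $p \geq 4q\nu^+(K)+4q^2-2q+12$ is engineered so that, as $k$ ranges over $[-\nu^+(K), \nu^+(K)]$, the residues $i \in \Zp$ satisfying $k(i)=k$ lie in a proper sub-arc of $\Zp$ and are separated by more than $q$ from each other. Under this separation the ``profile'' of the $d$-invariant deficit on $\spinc(Y)$ uniquely identifies the pairing between residues and $V_k$-values, up to the single ambiguity $i \leftrightarrow -i$ corresponding to $\spinc$-conjugation.

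Given this rigidity, if $S^3_{K'}(p/q) \cong S^3_K(p/q)$ as oriented three-manifolds, then the $d$-invariants on $\spinc(Y)$ viewed through the two surgery formulas must coincide. Since the deficit function identifies the labelling up to conjugation, one concludes $\phi_{K,p/q} = \phi_{K',p/q}$ up to conjugation. The main obstacle in executing this plan is the combinatorial bookkeeping: one must verify that the lens-space correction terms $d(L(p,q),i)$ cannot conspire with the bounded deficits $-2\max(V_{k(i)},V_{k'(i)})$ to produce accidental coincidences defeating the uniqueness of the matching. This is precisely what the quadratic term $4q^2$ and the linear slack $-2q+12$ ensure --- they provide enough variation in the $d(L(p,q),i)$ values, relative to the bounded size of the $V_k$-deficits, for the matching to be unique.
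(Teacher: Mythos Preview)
The paper does not supply a proof of this lemma: it is quoted verbatim from \cite{McCoy2020torus_char} and closed with a \qed. There is therefore no in-paper argument to compare against; the relevant comparison is with the proof in the cited reference.

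Your outline is the right strategy and is indeed the one used in \cite{McCoy2020torus_char}: one computes the ``deficit'' function
\[
i \longmapsto d(S^3_K(p/q),\phi_{K,p/q}(i)) - d(L(p,q),i) = -2\max\bigl(V_{\lfloor i/q\rfloor},\, V_{\lceil (p-i)/q\rceil}\bigr)
\]
via the Ni--Wu formula, observes that the support of this function on $\Zp$ is an interval of length controlled by $q\nu^+(K)$, and then uses the hypothesis on $p$ to argue that the induced map $\spinc(Y)\to\Z$ determines the labelling up to the conjugation symmetry $i\leftrightarrow -i$. So at the level of approach you are aligned with the source.

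That said, what you have written is a plan, not a proof. You yourself flag the gap: the ``combinatorial bookkeeping'' that rules out accidental coincidences between the lens-space $d$-invariants and the bounded deficits is precisely the content of the lemma, and you do not carry it out. In particular, the actual argument in \cite{McCoy2020torus_char} does not proceed by showing the deficit profile alone pins down the labelling; rather it uses that the two labellings $\phi_{K,p/q}$ and $\phi_{K',p/q}$ differ by an affine automorphism of $\Zp$ compatible with conjugation, and then exploits the recursive structure of the lens-space $d$-invariants (and the explicit bound on $p$) to force this automorphism to be the identity or conjugation. Your final paragraph gestures at this but does not supply it. If you intend this as a proof rather than a summary, you would need to make the affine-automorphism reduction explicit and then verify the numerical inequality that the bound $p\geq 4q\nu^+(K)+4q^2-2q+12$ is exactly what is needed.
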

Now we describe how $CFK^\infty(K)$ determines $\hfp(S_{K}^3(p/q))$. Consider the groups
\[
\Ap_i=\bigoplus_{s\in \Z}\left(s,\Abold_{\lfloor\frac{ps+i}{q}\rfloor}\right) \quad \text{and} \quad \Bp_i=\bigoplus_{s\in \Z}\left(s,\Bbold\right),
\]
and the maps
\[\vbold_{\lfloor\frac{ps+i}{q}\rfloor}\colon \left(s,\Abold_{\lfloor\frac{ps+i}{q}\rfloor}\right) \rightarrow \left(s,\Bbold \right) \quad \text{and} \quad
\hbold_{\lfloor\frac{ps+i}{q}\rfloor}\colon \left(s,\Abold_{\lfloor\frac{ps+i}{q}\rfloor}\right) \rightarrow \left(s+1,\Bbold \right),\]
where $\vbold_k$ and $\hbold_k$ are the maps on homology induced by $v_k$ and $h_k$ as in the previous section. These maps can be added together to obtain a chain map
\[\Dbold\colon \Ap_i \rightarrow \Bp_i,\]
defined by
\begin{equation}\label{eq:Dbold_def}
\Dbold(s,x)= \left(s,\vbold_{\lfloor\frac{ps+i}{q}\rfloor}(x)\right)+ \left(s+1,\hbold_{\lfloor\frac{ps+i}{q}\rfloor}(x)\right).
\end{equation}
The module $\hfp(S_{K}^3(p/q),i)$ is computed in terms of the mapping cone on $\Dbold$.
\begin{thm}[Ozsv{\'a}th-Szab{\'o}, \cite{Ozsvath2011rationalsurgery}]\label{thm:mappingcone}
For any knot $K$ in $S^3$, let $\Xipq$ be the mapping cone of $\Dbold$. Then there is a isomorphism of $\F[U]$-modules
\[H_*(\Xipq) \cong \hfp(S_{K}^3(p/q),i).\]\qed
\end{thm}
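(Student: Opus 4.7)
The plan is to follow the original Ozsv\'ath-Szab\'o strategy, which rests on three pillars: the \emph{large surgery theorem} identifying $\hfp$ of large integer surgeries with the homology of the quotient complexes $A_k^+$; the surgery exact triangle, which lets one propagate the formula from large $n$ to all integer $n$; and finally, a reduction of $p/q$ surgery to an integer surgery on an auxiliary link.

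First I would establish the large surgery theorem: for any integer $N\geq 2g(K)-1$, a suitable Heegaard diagram adapted to the surgery exhibits $CF^+(S_K^3(N),s)$ as filtered chain homotopy equivalent to $A_s^+$, under an identification of $\spinc$-structures induced by a version of~\eqref{eq:spinccorrespondence}. This is the $q=1$, $p \gg 0$ case of the theorem and is proved by counting holomorphic disks in a diagram in which the knot $K$ winds $N$ times around the basepoint; the $N$ periodic domains each contribute one copy of $A_s^+$, indexed by the residue $s \in \mathbb{Z}/N\mathbb{Z}$.

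Second, for general integer surgery $S_K^3(n)$, I would use the surgery exact triangle relating $\hfp(S^3)$, $\hfp(S_K^3(n))$, and $\hfp(S_K^3(n+1))$ in conjunction with the large surgery formula to perform a descending induction. The key computational input is that the connecting map in the exact triangle, restricted to a single $\spinc$-structure $s$, is realized on the model side by $\vbold_k + \hbold_{k}$ after suitable identifications of the $B^+$ targets. Truncating the mapping cone at sufficiently large $|s|$ (where $A_s^+ \cong B^+$ and both $\vbold_s$ and $\hbold_s$ become isomorphisms), one gets a quasi-isomorphism between the truncated mapping cone and $CF^+(S_K^3(n))$, and then one takes a direct limit to remove the truncation and obtain the integer mapping cone formula.

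Third, for the rational case, the cleanest route is to realize $p/q$-surgery on $K$ as integer surgery on the two-component link $L = K \cup c$, where $c$ is an unknot linked to $K$ with suitably chosen framing so that a slam-dunk (or equivalently, a continued fraction expansion of $p/q$) reproduces $S_K^3(p/q)$. Applying the integer surgery formula to $L$, then collapsing the $c$-direction of the resulting bicomplex using the fact that $c$ is an unknot, yields a mapping cone whose $\mathbb{A}^+_i$ term naturally breaks into a sum over $s \in \mathbb{Z}$ with the floor-function indexing $k = \lfloor (ps+i)/q \rfloor$ arising from the geometry of how the $\spinc$-structures on $S_L^3(p,N)$ restrict to those on $S_K^3(p/q)$ after the slam-dunk; this is where the formula~\eqref{eq:Dbold_def} comes from. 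The differential then splits into the direct sum of $\vbold$ and a shifted $\hbold$ as in $\Dbold$.

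The main obstacle in this program is bookkeeping: the large surgery theorem requires a delicate domain-counting argument to show that the higher holomorphic disks decouple across $\spinc$-structures, and tracking the absolute $\mathbb{Q}$-gradings and $\spinc$-identifications through the slam-dunk reduction requires careful use of the correspondence~\eqref{eq:spinccorrespondence} and its naturality under surgery cobordisms. A secondary subtlety is the convergence of the truncation/direct limit step: one must verify that the horizontal and vertical maps $\hbold_k,\vbold_k$ become the obvious isomorphisms for $|k| \geq g(K)$, which follows from the fact that $A_k^+ = B^+$ in that range, but the identification of the two copies of $B^+$ differs by a power of $U$ that must be tracked.
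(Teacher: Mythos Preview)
The paper does not prove this theorem; it is quoted as a black-box result from Ozsv\'ath--Szab\'o's rational surgery paper and is marked with a terminal \qed{} and no argument. So there is no proof in the paper to compare your proposal against.

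That said, your outline is a reasonable summary of the Ozsv\'ath--Szab\'o strategy. A few remarks on accuracy. The large-surgery identification and the truncation argument are indeed the backbone of the integer formula. For the rational case, the original paper \cite{Ozsvath2011rationalsurgery} does not proceed via a two-component link and slam-dunk as you describe; rather, it works directly with a Heegaard diagram for the knot in which the $\beta$-curve winds $p$ times meridionally and $q$ times longitudinally, and analyzes the resulting periodic domains to extract the $\lfloor (ps+i)/q\rfloor$ indexing. The link-surgery/slam-dunk route you sketch is a legitimate alternative (and is closer in spirit to later treatments such as the Manolescu--Ozsv\'ath link surgery formula), but it is not the argument in the cited reference. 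Also, your description of the large-surgery diagram as having ``$N$ periodic domains each contributing one copy of $A_s^+$'' is not quite right: there is a single periodic domain, and the $N$ different $\spinc$-structures correspond to the $N$ intersection points of the winding $\beta$-curve with the meridian, each giving rise to one $A_s^+$.
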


Furthermore, as discussed in \cite[Section~7.2]{Ozsvath2011rationalsurgery}, $\Xipq$ admits a $\Q$-grading that induces the absolute $\Q$-grading on $\hfp(S_{K}^3(p/q),i)$. Explicitly, the grading on $\Bp_i$ is independent of the knot $K$ and is determined by surgeries on the unknot. The grading on $\Ap_i$ satisfies the property that $\Dbold$ decreases the grading by one. For $p/q>0$ and $0\leq i \leq p-1$, this implies that gradings are as follows \cite{Ni2015cosmetic}. For each $s\in \Z$ let $m_{i,s}\in \Q$ denote the minimal grading on the tower $(s, \Bbold)\subseteq \Bp_i$. These satisfy \cite[Section~3.3]{Ni2014characterizing}
\begin{equation}\label{eq:mingrading}
\text{$m_{i,0}=d(S_{U}^3(p/q),i)-1$ and $m_{i,s+1}=m_{i,s}+2\left\lfloor\frac{i+ps}{q}\right\rfloor$ for any $s\in\Z$.}
\end{equation}

The second formula comes from combining \eqref{eq:Dbold_def} with the fact that $\vbold_k:\Abold_k \rightarrow \Bbold$ preserves homological grading and $\hbold_k:\Abold_k \rightarrow \Bbold$ shifts the homological grading by $2k$.
Thus if we define integers $D_{i,s}$ by the formula
\begin{equation*}
D_{i,s}=\begin{cases} \sum_{k=0}^{s-1} 2\floorfrac{i+pk}{q} &\text{if $s> 0$}\\
0 & \text{if $s= 0$}\\
\sum_{k=1}^{-s} 2\ceilfrac{kp-i}{q} &\text{if $s<0$,}
\end{cases}
\end{equation*}
then the $m_{i,s}$ satisfy
\begin{equation*}
m_{i,s}=d(S_{U}^3(p/q),i)-1 + D_{i,s}.
\end{equation*}

Thus we see that the $\Q$-grading on $(s,\Abold_{\lfloor\frac{ps+i}{q}\rfloor})\subseteq \Ap_i$ is determined by the homological grading on $\Abold_{\lfloor\frac{ps+i}{q}\rfloor}$ but shifted by a constant of the form:
\begin{equation}\label{eq:totalgradshift}
d(S_{U}^3(p/q),i)+D_{i,s}.
\end{equation}

Using these absolute gradings, Ni and Wu showed that for any $p/q>0$ and any $0\leq i \leq p-1$, the $d$-invariants of $S_{K}^3(p/q)$ can be calculated by \cite[Proposition~1.6]{Ni2015cosmetic}
\begin{equation}\label{eq:NiWuformula}
d(S_{K}^3(p/q),i)=d(S_{U}^3(p/q),i)-2\max\left\{V_{\lfloor\frac{i}{q}\rfloor},V_{\lceil\frac{p-i}{q}\rceil}\right\}.
\end{equation}

Together Lemma~\ref{lem:labellingbound} and equation \eqref{eq:NiWuformula} shows that the $V_k$ can be recovered from sufficiently large surgeries on a knot.
\begin{prop}\label{prop:Vi_recovery}
Let $K$ and $K'$ be knots in $S^3$ such that $S_{K}^3(p/q)\cong S_{K'}^3(p/q)$ for some $p/q\in \Q$. If $p,q>0$ satisfy
\[p\geq 4q\nu^+(K)+4q^2-2q+12.\]
then $\nu^+(K)=\nu^+(K')$ and $V_k(K)=V_k(K')$ for all $k\geq 0$.
\end{prop}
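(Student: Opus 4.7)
The plan is to combine Lemma~\ref{lem:labellingbound} with the Ni--Wu formula \eqref{eq:NiWuformula} to express each $V_k(K)$ in terms of the $d$-invariants of $S_K^3(p/q)$, and then use the hypothesized homeomorphism $S_K^3(p/q)\cong S_{K'}^3(p/q)$ to match the two sequences of $V_k$'s directly.

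First, I would invoke Lemma~\ref{lem:labellingbound}: the hypothesized bound on $p$ guarantees that the \spinc-labellings $\phi_{K,p/q}$ and $\phi_{K',p/q}$ agree up to conjugation. Since $d$-invariants are conjugation-invariant, the homeomorphism then yields $d(S_K^3(p/q),i)=d(S_{K'}^3(p/q),i)$ for every $i\in\Zp$. Substituting these equalities into \eqref{eq:NiWuformula} and cancelling the common unknot term $d(S_U^3(p/q),i)$ produces, for every $0\le i\le p-1$, the identity
\[
\max\!\bigl\{V_{\lfloor i/q\rfloor}(K),\,V_{\lceil(p-i)/q\rceil}(K)\bigr\}=\max\!\bigl\{V_{\lfloor i/q\rfloor}(K'),\,V_{\lceil(p-i)/q\rceil}(K')\bigr\}.
\]

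The second step is to exploit the fact that the sequence $\{V_k\}_{k\ge 0}$ is non-increasing, so each maximum above equals the value at the smaller of the two indices. Setting $i=kq$ gives $\lfloor i/q\rfloor=k$ and $\lceil(p-kq)/q\rceil=\lceil p/q\rceil-k$, so whenever $k\le\lceil p/q\rceil/2$ the relevant index is exactly $k$. The hypothesis $p\ge 4q\nu^+(K)+4q^2-2q+12$ easily yields $\lceil p/q\rceil/2\ge 2\nu^+(K)$, and one extracts
\[
V_k(K)=V_k(K')\quad\text{for all }0\le k\le 2\nu^+(K).
\]

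This range is wide enough to pin down both $\nu^+$ values. By definition of $\nu^+(K)$, when $\nu^+(K)\ge 1$ the entry $V_{\nu^+(K)-1}(K)$ is positive and therefore so is $V_{\nu^+(K)-1}(K')$, forcing $\nu^+(K')\ge\nu^+(K)$; conversely, $V_{\nu^+(K)}(K)=0$ forces $V_{\nu^+(K)}(K')=0$, so $\nu^+(K')\le\nu^+(K)$ (the case $\nu^+(K)=0$ is handled already at $k=0$). With $\nu^+(K)=\nu^+(K')$ established, the identity $V_k(K)=0=V_k(K')$ is automatic for $k>\nu^+(K)$. The only point requiring genuine care is the first step, where one must correctly interpret \emph{``up to conjugation''} for the \spinc-labellings and recall that conjugation leaves $d$-invariants unchanged; beyond that, the remaining argument is routine bookkeeping with the monotone sequence $V_k$.
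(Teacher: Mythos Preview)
Your proof is correct and follows essentially the same approach as the paper's: invoke Lemma~\ref{lem:labellingbound} to align the \spinc-labellings, apply the Ni--Wu formula \eqref{eq:NiWuformula} to equate the $V_k$'s over a range of indices reaching past $\nu^+(K)$, and then extend to all $k\geq 0$ using the eventual vanishing of the sequence. The paper states the range as $0\le k\le\lfloor(p+q-1)/2q\rfloor$ rather than your $0\le k\le 2\nu^+(K)$ and deduces $\nu^+(K)=\nu^+(K')$ at the end rather than in the middle, but these are cosmetic differences; your explicit remarks on conjugation-invariance of $d$-invariants and monotonicity of $V_k$ simply spell out steps the paper leaves implicit.
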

\begin{proof}
Lemma~\ref{lem:labellingbound} allows us to assume that the labelling maps on 
\[\spinc(S_{K}^3(p/q))=\spinc( S_{K'}^3(p/q))\]
agree.
This implies that for all $i$ in the range $0\leq i \leq p-1$ we have an isomorphism
\[\hfp(S_{K}^3(p/q),i)\cong\hfp(S_{K'}^3(p/q),i).\]
By comparing the $d$-invariants of these groups and applying \eqref{eq:NiWuformula}, this shows that $V_k(K)=V_k(K')$ for all $0\leq k \leq \lfloor \frac{p+q-1}{2q} \rfloor$. Since $p/q>2\nu^+(K)-1$, it follows that $V_{\lfloor \frac{p+q-1}{2q} \rfloor}(K)=V_{\lfloor \frac{p+q-1}{2q} \rfloor}(K')=0$. This shows that $V_k(K)=V_k(K')$ for all $k\geq 0$. The equality $\nu^+(K)=\nu^+(K')$ follows immediately.
\end{proof}

One can also compute the reduced Heegaard Floer homology groups. We require only the special case when $p/q\geq 2\nu^+(K)-1$. The following proposition can easily be derived from \cite[Corollary~12]{Gainullin2017mapping} or \cite[Proposition~3.6]{Ni2014characterizing}.
\begin{prop}\label{prop:redHFp}
If $p/q\geq 2\nu^+(K)-1$ and $p/q>0$, then as $\Q$-graded $\F[U]$-modules, we have
\begin{equation}\label{eq:redHFp_formula}
\hfred(S_{K}^3(p/q),i)\cong \bigoplus_{s\in\Z} \ared{\lfloor\frac{i+ps}{q}\rfloor}[d(S_{U}^3(p/q),i)+D_{i,s}].
\end{equation}
\qed
\end{prop}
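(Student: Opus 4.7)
The plan is to combine Theorem~\ref{thm:mappingcone} with the decomposition $\Abold_k \cong \AT_k \oplus \ared{k}$ and split off the $\ared{}$-contributions directly inside the mapping cone.

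First I would exploit the fact that $\Bbold \cong \tp_{(0)}$ is $U$-torsion free while each $\ared{k}$ is entirely $U$-torsion: any $\F[U]$-module homomorphism from $\ared{k}$ to $\Bbold$ must vanish. Hence both $\vbold_k$ and $\hbold_k$ are zero on $\ared{k}$, and the mapping-cone differential $\Dbold$ restricts to the zero map on the submodule
\[
\bigoplus_{s \in \Z} \bigl(s, \ared{\lfloor (ps+i)/q \rfloor}\bigr) \;\subseteq\; \Ap_i.
\]
This submodule therefore splits off as a direct summand of $\Xipq$, producing a decomposition of graded $\F[U]$-modules
\[
\hfp(S_K^3(p/q), i) \;\cong\; \bigoplus_{s \in \Z} \ared{\lfloor (ps+i)/q \rfloor}\bigl[d(S_U^3(p/q),i) + D_{i,s}\bigr] \;\oplus\; H_*\!\bigl(\mathrm{Cone}(\Dbold|_{\AT})\bigr),
\]
where the grading shift on each $\ared{}$-summand is exactly the one recorded in~\eqref{eq:totalgradshift}.

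The remaining step, which is the main technical input, is to verify that the ``tower part'' $H_*(\mathrm{Cone}(\Dbold|_{\AT}))$ is $U$-torsion free under the hypothesis $p/q \geq 2\nu^+(K) - 1$. The assumption forces all but finitely many indices $\lfloor (ps+i)/q \rfloor$ to lie in the stable regime $|k| \geq \nu^+(K)$, in which the $V_k$ agree with the unknot values and the maps $\vbold_k,\hbold_k$ between the towers $\AT_k \cong \tp_{(-2V_k)}$ and $\Bbold \cong \tp_{(0)}$ become standard truncated multiplication maps. This is precisely the setting analysed in the proofs of \cite[Corollary~12]{Gainullin2017mapping} and \cite[Proposition~3.6]{Ni2014characterizing}, from which one obtains that $H_*(\mathrm{Cone}(\Dbold|_{\AT}))$ is a single tower $\tp$ carrying no $U$-torsion.

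Extracting the $U$-torsion part of the resulting decomposition of $\hfp(S_K^3(p/q), i)$ then yields formula~\eqref{eq:redHFp_formula}. The only real obstacle is the torsion-free verification for the tower mapping cone, which is the content of the cited references; the remaining grading-shift bookkeeping is already handled by equations~\eqref{eq:mingrading}--\eqref{eq:totalgradshift}.
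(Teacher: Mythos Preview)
Your overall strategy matches the standard derivation the paper cites, but the first step contains a genuine error. The module $\Bbold \cong \tp_{(0)} = \F[U,U^{-1}]/U\F[U]$ is \emph{not} $U$-torsion free: every element is annihilated by some power of $U$ (for instance the element in grading $0$ satisfies $U\cdot 1=0$). Hence there is no abstract reason an $\F[U]$-map from the torsion module $\ared{k}$ into $\Bbold$ must vanish---the graded map $\F_{(0)}\to\tp_{(0)}$ sending the generator to the bottom of the tower is a perfectly good nonzero example. Your deduction that $\vbold_k$ and $\hbold_k$ automatically kill $\ared{k}$, and with it the claimed direct-sum splitting of $\Xipq$, is therefore unjustified.

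The repair uses only ingredients you already have. The restriction of $\Dbold$ to the tower summands $\bigoplus_s (s,\AT_{\lfloor(ps+i)/q\rfloor})$ is surjective onto $\Bp_i$ (each $\vbold_k|_{\AT_k}$ is surjective, since $\vbold_k(\AT_k)=\vbold_k(U^N\Abold_k)=U^N\Bbold=\Bbold$; likewise for $\hbold_k$), so $\Dbold$ itself is surjective and $H_*(\Xipq)\cong\ker\Dbold$. Projecting $\ker\Dbold$ onto the reduced summands of $\Ap_i$ then yields a short exact sequence of graded $\F[U]$-modules
\[
0 \longrightarrow \ker\bigl(\Dbold|_{\bigoplus_s\AT}\bigr) \longrightarrow \ker\Dbold \longrightarrow \bigoplus_{s\in\Z}\ared{\lfloor(ps+i)/q\rfloor}\bigl[d(S_U^3(p/q),i)+D_{i,s}\bigr] \longrightarrow 0,
\]
with surjectivity on the right again coming from surjectivity of $\Dbold$ on the towers. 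Under the hypothesis $p/q\geq 2\nu^+(K)-1$ the left-hand term is a single tower $\tp$ (this is precisely the computation in \cite[Proposition~3.6]{Ni2014characterizing} and \cite[Corollary~12]{Gainullin2017mapping}), and it is exactly the tower part of $\ker\Dbold$. Passing to the quotient by this tower gives \eqref{eq:redHFp_formula} with gradings as in \eqref{eq:totalgradshift}; no splitting of the cone itself is required.
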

Here we are using $\ared{k}[r]$ to denote a copy of $\ared{k}$ but with its grading shifted by $r\in \Q$. That is, if an element $\xi\in \ared{k}$ has grading $g$, then $\xi$ has grading $g+r$ in $\ared{k}[r]$. Thus, in accordance with the discussion above, Proposition~\ref{prop:redHFp} simply says that each copy of $\ared{\lfloor\frac{i+ps}{q}\rfloor}$ in the sum \eqref{eq:redHFp_formula} comes with the grading inherited from the $\Q$-grading on the summand $(s,\Abold)\subset \Xipq$.

We note that when $p/q\geq 2g(K)-1$, \eqref{eq:redHFp_formula} can be simplified.
\begin{lem}\label{lem:redHFp_small_g}
Let $K$ be a non-trivial knot. If $p/q\geq 2g(K)-1$, then as $\Q$-graded $\F[U]$-modules, we have
\begin{equation}\label{eq:redHFp_formula_small_g}
\hfred(S_{K}^3(p/q),i)
\cong \begin{cases}
\ared{\floorfrac{i}{q}}[d(S_{U}^3(p/q),i)] & 0\leq i <qg(K)\\
0 &qg(K)\leq i < p+q -qg(K)\\
\ared{\floorfrac{i-p}{q}} [d(S_{U}^3(p/q),i)+D_{i,-1}] & p+q -qg(K)\leq i \leq p-1.
\end{cases}
\end{equation}
\end{lem}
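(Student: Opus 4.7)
The plan is to apply Proposition~\ref{prop:redHFp}, whose hypothesis $p/q \geq 2\nu^+(K)-1$ is automatic from the standing assumption since $\nu^+(K) \leq g(K)$, and then to prune the resulting infinite direct sum by invoking the vanishing $\ared{k}=0$ for $|k|\geq g(K)$ from Proposition~\ref{prop:ared_properties}(i). Writing $g=g(K)$, a summand with index $s$ is potentially non-zero only when $-g+1 \leq \lfloor (i+ps)/q\rfloor \leq g-1$, which by the standard translation of floor inequalities becomes the double inequality $q-qg \leq i+ps < qg$.

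The bulk of the argument is then a case analysis on $s$, controlled by the hypothesis $p \geq q(2g-1)$. For $s=0$ the condition reduces to $0\leq i<qg$, the lower bound being automatic for $g \geq 1$; this accounts for the first case in the statement. For $s=-1$ the condition reduces to $p+q-qg \leq i\leq p-1$, the upper bound being automatic; this accounts for the third case. For $s\geq 1$, the requirement $i+ps<qg$ forces $i<qg-p\leq qg-q(2g-1)=q-qg\leq 0$, contradicting $i\geq 0$. For $s\leq -2$, the requirement $i+ps\geq q-qg$ forces $i\geq q-qg-ps\geq q-qg+2p\geq q(3g-1)$, which exceeds $p-1\leq q(2g-1)-1$. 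Hence only $s=0$ and $s=-1$ can contribute to the sum.

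Finally, the two active intervals $[0,qg)$ and $[p+q-qg,p-1]$ are disjoint precisely because the hypothesis $p\geq q(2g-1)$ is equivalent to $qg \leq p+q-qg$; this also produces the intermediate range $[qg,p+q-qg-1]$, possibly empty, where every summand is zero and $\hfred$ therefore vanishes, giving the second case. The grading shifts are read off directly from \eqref{eq:redHFp_formula}: the $s=0$ contribution carries the shift $d(S_U^3(p/q),i)+D_{i,0}=d(S_U^3(p/q),i)$ since $D_{i,0}=0$, while the $s=-1$ contribution carries the shift $d(S_U^3(p/q),i)+D_{i,-1}$, matching the stated formula. I do not anticipate any real obstacle beyond the careful bookkeeping of the floor inequalities; the work is entirely in identifying which $s$ survive.
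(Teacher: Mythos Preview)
Your approach is essentially the same as the paper's: invoke Proposition~\ref{prop:redHFp}, then prune the sum using $\ared{k}=0$ for $|k|\geq g$, and determine which $s$ survive. The paper's argument is slightly more economical (it reads off $s\leq 0$ from the upper bound and $s\geq -1$ from the lower bound in one line each), but your case analysis on $s$ is equivalent in spirit.

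There is, however, a genuine slip in your treatment of $s\leq -2$. You correctly derive $i\geq q-qg+2p$, but then you weaken this to $i\geq q(3g-1)$ and claim that this ``exceeds $p-1\leq q(2g-1)-1$''. The inequality $p-1\leq q(2g-1)-1$ is the wrong way around: the hypothesis is $p\geq q(2g-1)$, not $p\leq q(2g-1)$. In particular, $q(3g-1)$ need not exceed $p-1$ at all (take $g=1$, $q=2$, $p=100$: then $q(3g-1)=4$ while $p-1=99$). The repair is immediate: compare $q-qg+2p$ directly with $p-1$. Their difference is
\[
(q-qg+2p)-(p-1)=p+q-qg+1\geq q(2g-1)+q-qg+1=qg+1>0,
\]
so $i>p-1$, which is the desired contradiction. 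With this correction the argument goes through and matches the paper's proof.
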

\begin{proof}
Proposition~\ref{prop:ared_properties}\eqref{it:ared=0} shows that $\ared{k}=0$ for $|k|\geq g(K)$. Thus to derive \eqref{eq:redHFp_formula_small_g} from \eqref{eq:redHFp_formula} we need to determine, for a given $i$, the values $s\in\Z$ such that
\[
1-g(K)\leq \floorfrac{i+ps}{q}\leq g(K)-1.
\]
Equivalently, we need to find the $s\in \Z$ such that
\begin{equation}\label{eq:small_g_analysis}
1-g(K)\leq \frac{i+ps}{q}\leq g(K)-\frac{1}{q}.
\end{equation}
Since $i\geq 0$ and $p/q\geq 2g(K)-1\geq g(K)$, the upper bound in \eqref{eq:small_g_analysis} can be satisfied only if $s\leq 0$. Similarly, since $i\leq p-1$, the lower bound in \eqref{eq:small_g_analysis} can be satisfied only if $s\geq -1$. Analysing the possibilities $s=0$ and $s=-1$,  shows that \eqref{eq:redHFp_formula} reduces to \eqref{eq:redHFp_formula_small_g}.
\end{proof}

The following calculation shows that certain pairs of $D_{i,k}$ have differ by a non-zero even integer. This plays a key role in the proof of Lemma~\ref{lem:ared_recovery} below, where it is used to calculate the difference in absolute gradings between certain terms appearing in the mapping cone formula.
\begin{lem}\label{lem:difference_calc}
If $0\leq i<p$ is such that $\floorfrac{i}{q}=\floorfrac{i+1}{q}$, then for any $t\in\Z$, 
\[D_{i+1,tq}-D_{i,tq}=2t.\]
\end{lem}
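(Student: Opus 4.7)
The plan is a direct computation from the explicit formula for $D_{i,s}$, split into cases by the sign of $t$. The case $t=0$ is immediate from $D_{i,0}=D_{i+1,0}=0$.

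For $t>0$, unwinding the definition of $D_{i,s}$ term by term yields
\[
D_{i+1,tq}-D_{i,tq}=2\sum_{k=0}^{tq-1}\left(\floorfrac{i+1+pk}{q}-\floorfrac{i+pk}{q}\right).
\]
Each summand is either $0$ or $1$, and equals $1$ precisely when $i+pk\equiv q-1\pmod{q}$, equivalently when $pk\equiv -(i+1)\pmod{q}$. Since $p/q$ is a slope (written in lowest terms) we have $\gcd(p,q)=1$, so $p$ is invertible modulo $q$ and the congruence has a unique solution in any interval of $q$ consecutive integers. Hence exactly $t$ of the $tq$ values of $k$ contribute, giving $D_{i+1,tq}-D_{i,tq}=2t$.

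For $t<0$, the analogous manipulation using the ceiling formula in the definition of $D_{i,s}$ gives
\[
D_{i+1,tq}-D_{i,tq}=2\sum_{k=1}^{-tq}\left(\ceilfrac{kp-i-1}{q}-\ceilfrac{kp-i}{q}\right),
\]
where a short check shows each summand is $0$ or $-1$, taking the value $-1$ precisely when $kp\equiv i+1\pmod{q}$. Again $\gcd(p,q)=1$ ensures exactly one such $k$ per block of length $q$, so there are $-t$ contributing terms in total, producing $-2(-t)=2t$ as required.

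The argument has no real obstacle: it reduces to counting residues in $\Z/q\Z$, and the assumption $\gcd(p,q)=1$ supplies the uniqueness that makes the count come out to $t$ in each case. The hypothesis $\floorfrac{i}{q}=\floorfrac{i+1}{q}$ is not needed for the algebraic identity itself; it is present because the lemma will be applied in a setting where it guarantees $\ared{\floorfrac{i}{q}}=\ared{\floorfrac{i+1}{q}}$, so that the grading shift $2t$ meaningfully compares corresponding summands in the mapping cone.
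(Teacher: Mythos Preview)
Your proof is correct and follows essentially the same residue-counting argument as the paper: split by the sign of $t$, express the difference as a sum of $0/\pm 1$ terms, and count solutions to a congruence using $\gcd(p,q)=1$. Your observation that the hypothesis $\floorfrac{i}{q}=\floorfrac{i+1}{q}$ is not needed for the identity itself is also correct; with the summation range $1\le k\le -tq$ (as in the definition of $D_{i,s}$) the count of solutions is exactly $|t|$ regardless of the residue of $i+1$ modulo $q$.
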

\begin{proof}
Observe that
\[
\floorfrac{i+1+pk}{q} - \floorfrac{i+pk}{q}=\begin{cases}
1 &\text{if $pk \equiv -i-1 \bmod{q}$}\\
0&\text{otherwise.}
\end{cases}
\]
Thus, if $t\geq 0$, then \eqref{eq:totalgradshift} gives
 \begin{align*}
 D_{i+1,tq}-D_{i,tq} &= 2\sum_{k=0}^{tq-1} \floorfrac{i+1+pk}{q} - \floorfrac{i+pk}{q}\\
 &=2\#\{ 0\leq k\leq tq-1 \mid pk \equiv-i -1 \bmod q \}\\
 &=2t.
 \end{align*}
 Similarly if $t<0$ we have that
  \begin{align*}
 D_{i+1,tq}-D_{i,tq} &=2\sum_{k=1}^{-tq-1} \ceilfrac{kp-i-1}{q} - \ceilfrac{kp-i}{q}\\
 &=-2\#\{ 1\leq k\leq |tq|-1 \mid pk \equiv i+ 1 \bmod q \}\\
 &=2t.
 \end{align*}
 \end{proof}
Using Lemma~\ref{lem:labellingbound} and Proposition~\ref{prop:redHFp} we see that under some circumstances the $\ared{k}$ can also be recovered from sufficiently large surgeries on a knot. This is the crucial technical step in the proof of Theorem~\ref{thm:HFKbound}.
\begin{lem}\label{lem:ared_recovery}
Let $K$ and $K'$ be knots in $S^3$ such that $S_{K}^3(p/q)\cong S_{K'}^3(p/q)$ for some $p/q\in \Q$ satisfying 
\[
p\geq 12+4q^2 -2q +4qg(K).
\]
If one of the following conditions holds:
\begin{enumerate}[(i)]
\item\label{it:q_geq_3} $q\geq 3$ or
\item\label{it:q=2} $q=2$ and $K$ has Property~\spliff{},
\end{enumerate}
then $\ared{k}(K)\cong\ared{k}(K')$ for all $k\geq 0$.
\end{lem}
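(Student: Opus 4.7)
The plan is to compute $\hfred(Y, i)$, where $Y := S_K^3(p/q) = S_{K'}^3(p/q)$, in two ways via the mapping-cone description of Proposition~\ref{prop:redHFp}---once using $K$ and once using $K'$---and then compare the resulting expressions across all $\spinc$-structures $i$. Since $\nu^+(K) \le g(K)$, the bound on $p$ implies the hypotheses of Lemma~\ref{lem:labellingbound} and Proposition~\ref{prop:Vi_recovery}, so the $\spinc$-labelings on $Y$ from $K$ and $K'$ agree up to conjugation, $\nu^+(K) = \nu^+(K')$, and $V_k(K) = V_k(K')$ for all $k \ge 0$. Since $p/q > 2\nu^+(K) - 1$, Proposition~\ref{prop:redHFp} gives for each $i$
$$\hfred(Y, i) \;\cong\; \bigoplus_{s \in \Z} \ared{\lfloor (i+ps)/q \rfloor}(K)\bigl[d(S^3_U(p/q), i) + D_{i,s}\bigr],$$
and the analogous formula with $K'$ in place of $K$; these graded $\F[U]$-modules are isomorphic.

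Because $p/q > 2g(K) - 1$, Lemma~\ref{lem:redHFp_small_g} sharpens the $K$-side to at most one nonzero term for each $i$, and in particular the middle range $qg(K) \le i < p + q - qg(K)$ yields $\hfred(Y, i) = 0$. Matching this against the $K'$-sum forces each corresponding summand $\ared{\lfloor (i+ps)/q \rfloor}(K')$ to vanish, so varying $i$ and $s$ through the middle range yields $\ared{k}(K') = 0$ on a union of ``bands'' of $k$-values. However narrow ``gap'' intervals of width $O(g(K))$ remain, centred near each $k \approx j \lfloor p/q \rfloor$; closing these gaps is the heart of the argument. In case (i), $q \ge 3$, the extra spacing afforded by $q \ge 3$ suffices, and this step is carried out in \cite[Theorem~1.7]{McCoy2020torus_char}.

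In case (ii), $q = 2$, Property~\spliff{} of $K$ is the key new ingredient. All grading shifts $D_{i,s}$ are even (directly, and via Lemma~\ref{lem:difference_calc}), so any $\F$-factor in a gap-summand $\ared{k'}(K')$ contributes an $\F$-factor of the same grading parity to the $K$-side $\ared{\lfloor i/2 \rfloor}(K)$. Property~\spliff{} restricts the $\F$-factor part of each $\ared{\lfloor i/2 \rfloor}(K)$ to lie in at most one odd and one even grading. Comparing the two expressions for $\ared{k}(K)$ obtained from $\hfred(Y, 2k)$ and $\hfred(Y, 2k+1)$---which pick up the distinct gap values $k \pm \tfrac{p \pm 1}{2}$ from $s = \pm 1$---the parity and grading constraints force each gap contribution $\ared{k'}(K')$ to vanish, enabling a cascading induction in $k$ from $k = g(K) - 1$ down to $k = 0$ and producing free isomorphisms $\ared{k}(K) \cong \ared{k}(K')$ at each step. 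Combined with the vanishing of both sides for $k \ge g(K)$ this completes the proof; the delicate closing of the gap intervals is the main obstacle and is precisely where Property~\spliff{} enters.
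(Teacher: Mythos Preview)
Your setup is correct: aligning the \spinc-labellings, applying Proposition~\ref{prop:redHFp} to both $K$ and $K'$, and using Lemma~\ref{lem:redHFp_small_g} on the $K$-side is exactly how the paper begins. However, the core of your argument in case~(ii) has two genuine gaps.

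\textbf{First gap: the reduction to $s\equiv 0\pmod q$ is missing.} When you compare the expressions for $\hfred(Y,2k)$ and $\hfred(Y,2k+1)$ coming from the $K'$-side, the summands for odd $s$ are \emph{different} modules: for $s=1$ you get $\aredp{k+(p-1)/2}$ versus $\aredp{k+(p+1)/2}$, and similarly for $s=-1$. So your two expressions for $\ared{k}(K)$ do not have the same summands, and there is no immediate way to play grading shifts off against one another. The paper handles this by first proving (by a downward induction on $i$ starting from $i=qg(K)$, where everything vanishes) that in the $K'$-sum only the terms with $s\equiv 0\pmod q$ survive; for such $s=tq$ one has $\lfloor (i+ptq)/q\rfloor=tp+\lfloor i/q\rfloor$ regardless of which $i$ in the residue class one uses. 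Only after this reduction do the two expressions for $\ared{k}(K)$ have identical summands $\aredp{tp+k}$ with grading shifts $D_{qk,tq}$ and $D_{qk+1,tq}$, and then Lemma~\ref{lem:difference_calc} gives $D_{qk+1,tq}-D_{qk,tq}=2t\neq 0$.

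\textbf{Second gap: Property~\spliff{} constrains only $\F$-factors, not arbitrary summands.} You assert that the parity and grading constraints ``force each gap contribution $\aredp{k'}(K')$ to vanish'', but Property~\spliff{} says nothing about summands of the form $\calT(n)$ with $n\geq 2$. A gap module $\aredp{k'}$ could in principle consist entirely of such summands, in which case \spliff{} on $\ared{k}(K)$ imposes no constraint at all. The paper does \emph{not} attempt to prove that gap contributions vanish. Instead, assuming some $\ared{k}(K)\not\cong\aredp{k}(K')$, it first deduces $g(K')>g(K)\geq \nu^+(K')$ and then invokes Proposition~\ref{prop:ared_properties}\eqref{it:ared_g-1_calc} to locate a specific nonzero $\aredp{g(K')-1}$ with $U\cdot\aredp{g(K')-1}=0$, hence consisting entirely of $\F$-factors. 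That single $\F$-factor, sitting at $pt+j$ with $t\neq 0$, then appears in $\ared{j}(K)$ in two distinct gradings of the same parity (via the comparison enabled by the first reduction), contradicting \spliff{}. Your cascading induction does not supply this $\F$-factor and cannot get traction without it.
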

Before we embark on the full proof of Lemma~\ref{lem:ared_recovery} we will summarize the main ideas. Let $Y$ be the 3-manifold such that $Y\cong S_K^3(K)\cong S_K^3(K)$. Let $\ared{k}:=\ared{k}(K)$ and $\aredp{k}:=\ared{k}(K')$ denote the modules $\ared{k}$ derived from the knot Floer complexes of $K$ and $K'$, respectively. Using the mapping cone formula as in Proposition~\ref{prop:redHFp} and Lemma~\ref{lem:redHFp_small_g} allows us to derive expressions for $\hfred(Y)$ in terms of both the $\ared{k}$ and the $\aredp{k}$. A careful comparison of these two descriptions for $\hfred(Y)$ gives relations between the $\ared{k}$ and $\aredp{k}$. For all $i$ in the range $0\leq i < qg(K)$, we have
\begin{equation}\label{eq:technical_sketch}
\ared{\floorfrac{i}{q}}\cong\bigoplus_{t\in \Z}  \aredp{tp+\lfloor \frac{i}{q} \rfloor}[D_{i,tq}].
\end{equation}
For a fixed $k$, we have $\floorfrac{i}{q}=k$ for all $i=qk, \dots, qk+q-1$. So when $q\geq 2$, \eqref{eq:technical_sketch} actually yields multiple expressions for the same $\ared{k}$ differing only in the grading shifts for the $\bm{A}'_{{\rm red}}$ terms occurring on the right hand side. However, the differences in these grading shifts are precisely those studied in Lemma~\ref{lem:difference_calc}. In particular, if there is some $t\neq 0$ and some $0\leq j<g(K)$ such that $\aredp{pt+j}$ contains an $\F$-factor, then we will see that this $\F$-factor will necessarily show up $q$ times in $\ared{j}$ with $q$ distinct gradings all of the same parity. Notably the existence of such an $\F$-factor implies that $K$ does not have Property~\spliff{}. 

Now suppose that there is some $k$ such that $\ared{k}$ and $\aredp{k}$ are not isomorphic. By analysing the non-zero terms in the right hand side of \eqref{eq:technical_sketch}, we deduce that of $g(K')-1$ can be written in the form $pt+j$ for some $t\neq 0$ and $0\leq |j|<g(K)$. On the other hand Proposition~\ref{prop:ared_properties}\eqref{it:ared_g-1_calc} implies that $\aredp{g(K')-1}$ must contain an $\F$-factor. If $q\geq 2$, then we can use one of $\aredp{g(K')-1}$ or  $\aredp{1-g(K')}$ as in the previous paragraph to conclude that $K$ does not have Property~\spliff{}.

If $q\geq 3$, then we obtain a contradiction. If we fix an $i$ such that $k=\floorfrac{i}{q}=\floorfrac{i+1}{q}=\floorfrac{i+2}{q}$, then \eqref{eq:technical_sketch} yields three expressions for the group $\ared{k}$ each with the same summands, but differing in absolute grading. These expressions are not mutually compatible if $\ared{k}$ and $\aredp{k}$ are not isomorphic, since an application of Lemma~\ref{lem:difference_calc} shows that the gradings in the expression from $\floorfrac{i+1}{q}$ will be smaller than those appearing in the expressions from $\floorfrac{i}{q}$ and $\floorfrac{i+2}{q}$.

\begin{proof}[Proof of Lemma~\ref{lem:ared_recovery}]
\setcounter{claim}{0}
We may assume that $K$ is non-trivial -- if $K$ were trivial, then we already have the stronger conclusion that $K'$ is also trivial, since every slope is characterizing for the unknot. 
By Proposition~\ref{prop:Vi_recovery}, we have that $\nu^+(K)=\nu^+(K')$. Thus we have 
\begin{equation}\label{eq:pq_bound}
p/q>2g(K)\geq 2\nu^+(K)=2\nu^+(K').
\end{equation}
Let $Y$ denote the 3-manifold $Y\cong S_{K}^3(p/q)\cong S_{K'}^3(p/q)$.
By Lemma~\ref{lem:labellingbound}, we may assume that the labelling maps $\phi_{K,p/q}, \phi_{K',p/q}: \Zp \rightarrow \spinc(Y)$ coincide.
For $i=0, \dots, p-1$, this allows us to define $\hfp(Y,i)$ by
\[\hfp(Y,i)\cong \hfp(S_{K}^3(p/q),\phi_{K,p/q}(i))\cong\hfp(S_{K'}^3(p/q),\phi_{K,p/q}(i)).\]
In order to simplify the comparison between gradings in different groups, we will normalize the $\Q$-grading on $\hfp(Y,i)$ by subtracting $d(S_U^3(p/q),i)$. With this normalization in place, \eqref{eq:NiWuformula} shows that the absolute $\Q$-grading on $\hfp(Y,i)$ becomes a $\Z$-grading.

We will use $\ared{k}:=\ared{k}(K)$ and $\aredp{k}:=\ared{k}(K')$ to denote the modules $\ared{k}$ derived from the knot Floer complexes of $K$ and $K'$ respectively.

Since $p/q>2g(K)$, Lemma~\ref{lem:redHFp_small_g} applies to calculate $\hfred(Y)$ in terms of the $\ared{k}$. Taking into account our normalized grading, this gives

\begin{equation}\label{eq:mapping_cone_app_1}
\hfred(Y,i)\cong \begin{cases}
\ared{\floorfrac{i}{q}} & 0\leq i <qg(K)\\
0 &qg(K)\leq i < p+q -qg(K)\\
\ared{\floorfrac{i-p}{q}} [D_{i,-1}] & p+q -qg(K)\leq i \leq p-1,
\end{cases}
\end{equation}
 for $i$ in the range $0\leq i \leq p-1$.

In particular, for $k$ in the range $0\leq k< g(K)$ we have isomorphisms
\[\hfred(Y,kq)\cong \dotsb \cong \hfred(Y,kq+q-1)\cong \ared{k}.\]

Given \eqref{eq:pq_bound}, we may also apply Proposition~\ref{prop:redHFp} to the surgery description of $Y$ in terms of $K'$. This yields the expression:
\begin{equation}\label{eq:direct_application}
 \hfred(Y,i)\cong\bigoplus_{s\in \Z}  \aredp{\lfloor \frac{ps+i}{q} \rfloor}[D_{i,s}].
\end{equation}
The following claim shows that \eqref{eq:direct_application} can be simplified dramatically.
\begin{claim}
For all $i$ in the range $0\leq i \leq qg(K)$, we have
\begin{equation}\label{eq:modq_decomp}
\hfred(Y,i)\cong\bigoplus_{t\in \Z}  \aredp{tp+\lfloor \frac{i}{q} \rfloor}[D_{i,tq}].
\end{equation}
\end{claim}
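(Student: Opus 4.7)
The plan is to start from the expression \eqref{eq:direct_application} obtained by applying Proposition~\ref{prop:redHFp} to the surgery description of $Y$ in terms of $K'$; this application is justified because $p/q > 2g(K) \geq 2\nu^+(K) = 2\nu^+(K')$ via Proposition~\ref{prop:Vi_recovery}. The goal is then to collapse this sum onto the sub-sum indexed by multiples of $q$.

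First, I would reindex the sum by writing each $s\in\Z$ uniquely as $s = tq+r$ with $t\in\Z$ and $r\in\{0,\dots,q-1\}$. The identity $\lfloor(p(tq+r)+i)/q\rfloor = pt + \lfloor(pr+i)/q\rfloor$ shows that the sub-sum over $r=0$ is precisely the right-hand side of \eqref{eq:modq_decomp}. The remaining task is to prove that the $r\neq 0$ contributions are trivial, i.e.\ that $\aredp{pt + \lfloor(pr+i)/q\rfloor}=0$ for all $t\in\Z$, every $r\in\{1,\dots,q-1\}$, and every $i\in[0,qg(K)]$.

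The second step is to exploit the vanishing $\hfred(Y, i'')=0$ for $i''\in[qg(K),p+q-qg(K))$ recorded in \eqref{eq:mapping_cone_app_1}. Applied through \eqref{eq:direct_application}, each such $i''$ forces every summand to be zero; in particular $\aredp{\lfloor(ps''+i'')/q\rfloor}=0$ for every $s''\in\Z$. The plan is then to argue that every index $j = pt + \lfloor(pr+i)/q\rfloor$ appearing in the $r\neq 0$ part of the sum can be rewritten as $\lfloor(ps''+i'')/q\rfloor$ for some $(s'',i'')$ with $i''$ in the vanishing range. Concretely, using $qj = p(tq+r) + i - \delta$ where $\delta = (pr+i)\bmod q \in [0,q)$, one can attempt $s'' = tq+r$ or a neighbouring value and solve for $i''$; the hypothesis $p\geq 12+4q^2-2q+4qg(K)$ ensures the vanishing range has length $p+q-2qg(K) > 4qg(K)$, providing enough slack to carry this out.

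The hard part, I expect, will be verifying this index-matching uniformly, since values of $i$ near the endpoints $0$ and $qg(K)$ do not immediately admit the naive shift of $s''$; some extra care, possibly using Lemma~\ref{lem:difference_calc} to compare grading shifts as $i$ varies within a fixed residue class modulo $q$, will be needed to handle these boundary cases. The structural similarity between the vanishing set for $\aredp{\cdot}$ and the indices appearing among the $r\neq 0$ terms should make the matching ultimately succeed, yielding the desired simplification.
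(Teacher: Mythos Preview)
Your reindexing $s=tq+r$ and the goal of showing that the $r\neq 0$ summands vanish are both correct; the claim is equivalent to this vanishing. However, the mechanism you propose for proving the vanishing---matching each unwanted index $j=pt+\lfloor(pr+i)/q\rfloor$ with some $\lfloor(ps''+i'')/q\rfloor$ for $i''$ in the range $[qg(K),p+q-qg(K))$---does not work in general, and this is not merely a boundary nuisance.

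Concretely, take $q=2$, $g(K)=2$, $p$ large and odd, and $i=0$. The unwanted index (for $r=1$) is $j=pt+(p-1)/2$. The equation $\lfloor(ps''+i'')/2\rfloor=j$ forces $i''\equiv 0$ or $i''\equiv -1 \pmod p$, so the only candidates in $[0,p)$ are $i''\in\{0,p-1\}$, neither of which lies in the vanishing range $[4,p-2)$. More systematically, the vanishing information $\hfred(Y,i'')=0$ for $i''\in[4,p-2)$ kills $\aredp{pt+k}$ for $k\in[2,(p-3)/2]\cup[(p+3)/2,p-2]$, but leaves the two values $k=(p\pm 1)/2$ untouched. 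Lemma~\ref{lem:difference_calc} concerns grading shifts rather than index values and does not close this gap.

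The paper supplies the missing information by a downward induction on $i$ from $i=qg(K)$. The extra leverage comes from \eqref{eq:mapping_cone_app_1}: whenever $\lfloor i/q\rfloor=\lfloor(i+1)/q\rfloor$ one has $\hfred(Y,i)\cong\hfred(Y,i+1)$, so the sub-sum over $s\equiv 0\pmod q$ in \eqref{eq:direct_application} for index $i$ already has the full dimension of $\hfred(Y,i)$ by the inductive hypothesis, forcing the remaining summands to vanish. When $i\equiv -1\pmod q$, one instead compares the two descriptions of $\hfred(Y,i+1)$ (from \eqref{eq:direct_application} and the inductive hypothesis) to conclude $\aredp{\lfloor(ps+i+1)/q\rfloor}=0$ for $s\not\equiv 0\pmod q$, and then notes $\lfloor(ps+i)/q\rfloor=\lfloor(ps+i+1)/q\rfloor$ for such $s$. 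It is this repeated dimension comparison between consecutive $i$, not the vanishing window alone, that propagates the vanishing down to $i=0$.
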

Note that equating \eqref{eq:mapping_cone_app_1} and \eqref{eq:modq_decomp} yields the expression \eqref{eq:technical_sketch}  discussed in the proof outline above.
\begin{proof}[Proof of Claim]
We establish \eqref{eq:modq_decomp} by establishing it first for $i=qg(K)$ and then inducting downwards on the index $i$.

For $i= gq$, we obtain from \eqref{eq:mapping_cone_app_1} and \eqref{eq:direct_application}:
\begin{align*}
\bigoplus_{t\in \Z}  \aredp{tp+\floorfrac{i}{q}}[D_{i,tq}]&= \bigoplus_{t\in \Z}  \aredp{\floorfrac{pqt+i}{q}} [D_{i,tq}]\\
& \subseteq \bigoplus_{s\in \Z}\aredp{\lfloor \frac{ps+i}{q} \rfloor}[D_{i,s}]\\
&\cong \hfred(Y,i) =0.
\end{align*}
This gives \eqref{eq:modq_decomp} for $i= gq$.

Now suppose that \eqref{eq:modq_decomp} holds for index $i+1$ in the range $0<i+1\leq qg(K)$. That is,
\begin{equation}\label{eq:i+1_case}
\hfred(Y,i+1)\cong\bigoplus_{t\in \Z}  \aredp{tp+\floorfrac{i+1}{q}}[D_{i+1,tq}].
\end{equation}

First suppose that $i\equiv -1 \bmod q$. For any $s\not\equiv 0\bmod q$, \eqref{eq:i+1_case} implies that
\[
\aredp{\floorfrac{ps+i+1}{q}}=0.
\]

Furthermore, for any $s\not\equiv 0\bmod q$, we have that $\floorfrac{ps+i}{q}=\floorfrac{ps+i+1}{q}$, since $ps+i+1\equiv ps\not\equiv 0 \bmod q$. This yields
\[
\bigoplus_{\substack{s\in \Z\\ s\not\equiv 0\bmod q}} \aredp{\floorfrac{ps+i}{q}}=\bigoplus_{\substack{s\in \Z\\ s\not\equiv 0\bmod q}} \aredp{\floorfrac{ps+i+1}{q}}=0.
\]
Thus the only non-zero terms in the right hand side of \eqref{eq:direct_application} occur when the index $s$ is divisible by $q$. Thus \eqref{eq:direct_application} reduces to \eqref{eq:modq_decomp} when $i\equiv -1 \bmod q$.

Alternatively, suppose that $i\not\equiv -1 \bmod q$. In this case we have that $\hfred(Y,i)\cong\hfred(Y,i+1)$ and  that$\floorfrac{i}{q}=\floorfrac{i+1}{q}$. Let $M$ be the module
\[
M=\bigoplus_{t\in \Z}  \aredp{\lfloor \frac{pqt+i}{q} \rfloor}[D_{i,tq}].
\]
Note that $M$ arises a summand of the right hand side of \eqref{eq:direct_application} and is hence isomorphic to a submodule of $\hfred(Y,i)$. However, calculating the dimension of $M$ as an $\F$-vector space, we obtain
\begin{align*}
\dim_\F M &=\sum_{t\in\Z} \dim_\F \aredp{pt+ \floorfrac{i}{q}}\\
&=\sum_{t\in\Z} \dim_\F \aredp{pt+ \floorfrac{i+1}{q}}\\
&=\dim_\F \hfred(Y,i+1) =\dim_\F \hfred(Y,i).
\end{align*}
 Thus $M$ must actually be isomorphic to $\hfred(Y,i)$, yielding the inductive step in this case too.
 \end{proof}
 Using \eqref{eq:mapping_cone_app_1} and \eqref{eq:modq_decomp} we obtain restrictions on which $\aredp{k}$ can be non-zero. 
\begin{claim}\label{claim:non-trivial}
 If $\aredp{k}$ is non-trivial for some $k$, then there are integers $t$ and $j$ such that $k=pt+j$ and $|j|<g(K)$.
\end{claim}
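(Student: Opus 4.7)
My approach combines the decomposition \eqref{eq:modq_decomp} from the previous claim with an analogous decomposition valid for the \emph{high} range $p+q-qg(K)\leq i\leq p-1$, together with the vanishing $\hfred(Y,i)=0$ for $qg(K)\leq i<p+q-qg(K)$ provided by \eqref{eq:mapping_cone_app_1}. The idea is that if $\aredp{k}\neq 0$, then $\aredp{k}$ must occur as a summand of some $\hfred(Y,i)$ via \eqref{eq:direct_application}, and comparing this direct-sum expression against the two structured decompositions pins down $k$.

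The key preparatory step is to establish, for $p+q-qg(K)\leq i\leq p-1$, the high-$i$ analogue
\[
\hfred(Y,i)\cong\bigoplus_{t\in\Z}\aredp{tp+\floorfrac{i-p}{q}}[D_{i,tq-1}],
\]
by a proof that mirrors the one given for \eqref{eq:modq_decomp}. The induction now runs upward starting from $i=p+q-qg(K)$. Since $\gcd(p,q)=1$ and $i\equiv p\pmod q$ at this base, one checks that $\floorfrac{ps+i}{q}=\floorfrac{ps+i-1}{q}$ whenever $s\not\equiv -1\pmod q$, and combined with the vanishing $\hfred(Y,i-1)=0$ applied to \eqref{eq:direct_application} at $i-1$ this forces $\aredp{\floorfrac{ps+i}{q}}=0$ for all $s\not\equiv -1\pmod q$, so that only the terms with $s=tq-1$ survive at $i$. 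The inductive step then proceeds word-for-word as in the proof of the previous claim, with the residue class $s\equiv 0\pmod q$ replaced throughout by $s\equiv -1\pmod q$ and the two subcases $i\equiv p-1\pmod q$ (playing the role of $i\equiv -1\pmod q$) and $i\not\equiv p-1\pmod q$ (dimension-count case) handled separately.

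With both decompositions in hand the claim is immediate. Suppose $\aredp{k}\neq 0$. Then $\aredp{k}$ appears as a summand of $\hfred(Y,i)$ via \eqref{eq:direct_application} for some $(s,i)$ with $\floorfrac{ps+i}{q}=k$, and the vanishing of $\hfred(Y,i)$ in the middle range forces $i\in[0,qg(K)-1]\cup[p+q-qg(K),p-1]$. If $i\in[0,qg(K)-1]$, matching \eqref{eq:direct_application} against \eqref{eq:modq_decomp} shows only the terms with $s\equiv 0\pmod q$ can contribute, giving $k=pt+j$ with $j=\floorfrac{i}{q}\in[0,g(K)-1]$. If instead $i\in[p+q-qg(K),p-1]$, matching \eqref{eq:direct_application} against the high-$i$ analogue instead forces $s\equiv -1\pmod q$, giving $k=pt+j$ with $j=\floorfrac{i-p}{q}\in[1-g(K),-1]$. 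In either case $|j|\leq g(K)-1$, as required.

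The principal obstacle is the bookkeeping in the induction that establishes the high-$i$ analogue: although the structure of the argument is identical to that of the previous claim, one must carefully track the shift $D_{i,tq-1}$ in place of $D_{i,tq}$, the congruence $s\equiv -1\pmod q$ in place of $s\equiv 0\pmod q$, and the fact that the induction runs upward rather than downward, verifying that the matching of indices and gradings goes through correctly across all of $[p+q-qg(K),p-1]$.
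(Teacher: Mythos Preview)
Your argument is correct, but it takes a longer route than the paper's. The difference lies entirely in the treatment of the high range $p+q-qg(K)\leq i\leq p-1$. You establish a separate high-$i$ analogue of \eqref{eq:modq_decomp} by a second induction running upward, with the residue class $s\equiv -1\pmod q$ playing the role that $s\equiv 0\pmod q$ played before; this is valid (your base case and the floor-function check are correct, and the dimension-count step goes through since $\floorfrac{i-p}{q}=\floorfrac{i+1-p}{q}$ when $i\not\equiv p-1\pmod q$), but it duplicates a fair amount of bookkeeping. The paper instead exploits the symmetry $\aredp{-k}\cong\aredp{k}$ (up to grading shift, \cite[Lemma~2.3]{Hom2015reducible}) to reduce the high-$i$ case to the already-proven low-$i$ case: if $\aredp{k}\neq 0$ and the associated index $i$ lies in the high range, then $p-i$ lies in the low range $(0,qg(K))$, and $\aredp{-k}$ arises as a non-trivial summand of $\hfred(Y,p-i)$ via \eqref{eq:direct_application}; applying \eqref{eq:modq_decomp} at $p-i$ then gives $-k=pt+j$ with $0\leq j<g(K)$, hence $k$ has the required form. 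Your approach has the mild advantage of yielding an explicit high-$i$ decomposition and of not invoking the $k\leftrightarrow -k$ symmetry, but the paper's route is considerably shorter.
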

\begin{proof}[Proof of Claim]
Suppose that $\aredp{k}$ is non-trivial. There are integers $s$ and $i$ such that $qk=ps+i$ and $0\leq i\leq p-1$. According to \eqref{eq:direct_application}, the module $\aredp{k}$ appears as a non-trivial summand in $\hfred(Y,i)$. Thus, by \eqref{eq:mapping_cone_app_1} we must have that $i<qg(K)$ or $i\geq p+q-qg(K)$.

If $i<qg(K)$, then that $\aredp{k}$ arises as a non-trivial summand in \eqref{eq:modq_decomp}. Thus $k$ takes the form $k=tp+j$, where $j=\floorfrac{i}{q}<g(K)$.

On the other hand, if $i\geq p+q-qg(K)$, then we can write $-qk=-p-ps+p-i$, where $p-i\leq gq-q$. By \eqref{eq:direct_application}, $\aredp{-k}$ arises as a summand in $\hfred(Y,p-i)$. Since $\aredp{k}$ and $\aredp{-k}$ are isomorphic up to an overall grading shift \cite[Lemma~2.3]{Hom2015reducible}, $\aredp{-k}$ is also non-trivial. Hence \eqref{eq:modq_decomp} applied to $\hfred(Y,p-i)$ shows that $-k$ takes the form $-k=tp+j$, where $j=\floorfrac{p-i}{q}\leq g(K)-1$. This shows that $k$ can also be written in the required form.
\end{proof}

Next we deduce the existence of an $\F$-factor in $\aredp{g(K')-1}$ when the $\ared{k}$ and $\aredp{k}$ are not all isomorphic. This $\F$-factor will later be used to deduce that $K$ does not have Property~\spliff{}.

\begin{claim}\label{claim:genus}
If there is some $k\geq 0$ such that $\ared{k}$ and $\aredp{k}$ are not isomorphic, then there exist integers $t\neq 0$ and $0\leq j<g(K)$ such that $\aredp{pt+j}$ contains at least one $\F$-factor.
\end{claim}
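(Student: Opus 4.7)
The plan is to deduce from the hypothesis that $g(K') > g(K)$, and then to harvest an $\F$-factor from $\aredp{g(K')-1}$ via Proposition~\ref{prop:ared_properties}\eqref{it:ared_g-1_calc}. The principal input is the identity \eqref{eq:technical_sketch}, which expresses $\ared{\floorfrac{i}{q}}$ as a direct sum of grading-shifted copies of $\aredp{tp + \floorfrac{i}{q}}$.

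First I would split on the location of the witnessing index $k$. If $k < g(K)$, taking $i = qk$ in \eqref{eq:technical_sketch} produces the identification $\ared{k} \cong \aredp{k} \oplus \bigoplus_{t \neq 0} \aredp{tp + k}[D_{qk, tq}]$, where the $t = 0$ term carries no grading shift since $D_{qk,0} = 0$. Hence $\ared{k} \not\cong \aredp{k}$ forces some $\aredp{tp + k}$ with $t \neq 0$ to be non-trivial, and Proposition~\ref{prop:ared_properties}\eqref{it:ared=0} applied to $K'$ yields $|tp + k| < g(K')$. Since $|tp + k| \geq p - g(K)$ whenever $t \neq 0$, this gives $g(K') > p - g(K)$, which is strictly larger than $g(K)$ thanks to the standing bound $p \geq 12 + 4q^2 - 2q + 4qg(K)$. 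If instead $k \geq g(K)$, Proposition~\ref{prop:ared_properties}\eqref{it:ared=0} applied to $K$ gives $\ared{k} = 0$, so $\aredp{k}$ itself is non-trivial and $g(K') > k \geq g(K)$ directly. Either way, $g(K') > g(K)$.

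With the genus gap in hand, Proposition~\ref{prop:Vi_recovery} yields $\nu^+(K') = \nu^+(K) \leq g(K) < g(K')$, so Proposition~\ref{prop:ared_properties}\eqref{it:ared_g-1_calc} applied to $K'$ guarantees that $\aredp{g(K')-1}$ and $\aredp{1-g(K')}$ are non-trivial with trivial $U$-action, hence consist entirely of $\F$-factors. Applying Claim~\ref{claim:non-trivial} to $\aredp{g(K')-1}$ yields a decomposition $g(K') - 1 = pt + j$ with $|j| < g(K)$. If $j \geq 0$, then $t \neq 0$ because $g(K') - 1 \geq g(K) > j$, and the pair $(t,j)$ supplies the required $\F$-factor. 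If $j < 0$, I pass instead to the mirror decomposition $1 - g(K') = p(-t) + (-j)$, where $0 < -j < g(K)$ and $-t \neq 0$ (otherwise $g(K') = 1 + j \leq 0$, contradicting $g(K') > g(K) \geq 1$); the $\F$-factor now comes from $\aredp{1-g(K')}$ with the pair $(-t,-j)$.

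The main subtlety is the book-keeping in the last step: Claim~\ref{claim:non-trivial} only controls $|j|$, while the statement demands $j \geq 0$. This is bridged by the symmetry $\aredp{m} \cong \aredp{-m}$ (up to grading shift) which is already built into Proposition~\ref{prop:ared_properties}\eqref{it:ared_g-1_calc} at the two indices $\pm (g(K')-1)$. The genus-gap argument and the verification that the $t = 0$ summand of \eqref{eq:technical_sketch} is a grading-preserving copy of $\aredp{k}$ are otherwise routine, given how large $p$ is compared with $g(K)$.
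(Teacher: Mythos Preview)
Your proof is correct and follows essentially the same route as the paper's: first establish $g(K')>g(K)$ from \eqref{eq:technical_sketch}, then extract an $\F$-factor from $\aredp{\pm(g(K')-1)}$ via Proposition~\ref{prop:ared_properties}\eqref{it:ared_g-1_calc} and Claim~\ref{claim:non-trivial}. The only cosmetic difference is in how the genus gap is argued: the paper splits on whether $\aredp{\bullet}$ has any support with $|\,\cdot\,|\geq g(K)$, while you split on whether the witnessing index $k$ lies below $g(K)$; these are contrapositive reorganizations of the same observation, and your version incidentally yields the stronger (but unneeded) bound $g(K')>p-g(K)$.
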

\begin{proof}[Proof of Claim]
First we show $g(K')>g(K)$. If there exists $k$ such that $|k|\geq g(K)$ and $\aredp{k}$ is non-trivial, then Proposition~\ref{prop:ared_properties}\eqref{it:ared=0} implies that $g(K')>g(K)$. Thus suppose that $\aredp{k}=0$ whenever $|k|\geq g(K)$. However, with such an assumption we find that for all $k$ in the range $0\leq k <g(K)$, \eqref{eq:mapping_cone_app_1} and \eqref{eq:modq_decomp} imply that
\begin{align*}
\ared{k}&\cong \hfred(Y,kq)\\
&\cong \aredp{k}\oplus \bigoplus_{t\in \Z\setminus\{0\}}  \aredp{pt+k}[D_{kq,tq}]\\
&=\aredp{k},
\end{align*}
where we used the fact that $|pt+k|>g(K)$ for all $t\neq 0$ to obtain the last line.

Thus if there is some $k\geq 0$ such that $\ared{k}\not\cong \aredp{k}$, then
\[
g(K')>g(K)\geq \nu^+(K)=\nu^+(K').
\]
Proposition~\ref{prop:ared_properties}\eqref{it:ared_g-1_calc} implies that $\aredp{g(K')-1}$ and $\aredp{1-g(K')}$ are both non-zero and comprise entirely of $\F$-factors. By Claim~\ref{claim:non-trivial}, we may write
\[
g(K')-1= pt'+j'
\]
where $|j'|<g(K)$. Moreover since $g(K')>g(K)$, we see that $t'\geq 1$. If $j'\geq 0$, then we take $t=t'$ and $j=j'$ to get integers satisfying the conclusions of the claim. If $j'<0$, then we take $j=-j'$ and $t=-t'$ to complete the proof of the claim.
\end{proof}

For the remainder of the proof we assume that there is some $k\geq 0$ such that $\ared{k}\not\cong \ared{k}'$. By Claim~\ref{claim:genus} this implies the existence of $t\neq 0$ and $0\leq j <g(K)$ such that $\aredp{pt+j}$ is non-trivial and, in particular, admits an $\F$-factor.
Suppose that $q\geq 2$. We will show that this implies that $K$ does not have Property~\spliff{}.
 Suppose that the $\F$-factor of $\aredp{pt+j}$ is supported in degree $d$. By \eqref{eq:modq_decomp}, we can conclude that the module $\hfred(Y,jq)$ admits an $\F$-factor in degree $d+D_{jq,tq}$ and that $\hfred(Y,jq+1)$ admits an $\F$-factor in degree $d+D_{jq+1,tq}$. However, by \eqref{eq:mapping_cone_app_1}, we have that
\[
\hfred(Y,qj)\cong \ared{j}\cong \hfred(Y,qj+1).
\]
Thus $\ared{j}$ admits $\F$-factors in degrees $d+D_{i,tq}$ and $d+D_{i+1,tq}$. Since $t\neq 0$, Lemma~\ref{lem:difference_calc} shows that these degrees are distinct and of the same parity. Thus $\ared{j}$ and, hence $K$, does not have Property~\spliff{}. This proves condition \eqref{it:q=2} of the lemma.

Now suppose that $q\geq 3$. We will show that this results in a contradiction. By considering \eqref{eq:modq_decomp}, for $i=qj$, $qj+1$ and $qj+2$, we obtain isomorphisms
\[
\ared{j}\cong \bigoplus_{s\in \Z}  \aredp{sp+j}[D_{jq,sq}] \cong \bigoplus_{s\in \Z}  \aredp{sp+ j}[D_{jq+1,sq}] \cong \bigoplus_{s\in \Z}  \aredp{sp+ j}[D_{jq+2,sq}].
\] 
Since $D_{i,0}=0$ for all $i$, the summand corresponding to $s=0$ in each of the above groups is isomorphic to $\aredp{j}[0]$. Ignoring this common summand, implies the existence of isomorphisms
\begin{equation}\label{eq:q_geq_isos}
\bigoplus_{\substack{s\in \Z\\ s\neq 0}}  \aredp{sp+j}[D_{jq,sq}] \cong \bigoplus_{\substack{s\in \Z\\ s\neq 0}}  \aredp{sp+ j}[D_{jq+1,sq}] \cong \bigoplus_{\substack{s\in \Z\\ s\neq 0}}  \aredp{sp+ j}[D_{jq+2,sq}].
\end{equation}
Moreover, since $\aredp{tp+j}$ is non-zero, these groups are all non-trivial. Let $d$ be the maximal grading of an element in $\bigoplus_{\substack{s\in \Z\\ s\neq 0}}  \aredp{sp+ j}[D_{jq+1,sq}]$. Thus, there is an integer $r\neq 0$ such that $\aredp{rp+ j}[D_{jq+1,rq}]$ contains an element of grading $d$. If $r>0$, then Lemma~\ref{lem:difference_calc} implies that $\aredp{rp+ j}[D_{jq+2,rq}]$ contains an element of grading
\[
d-D_{jq+1,rq}+D_{jq+2,rq}=d+2r>d,
\]
contradicting the isomorphisms in \eqref{eq:q_geq_isos}. Likewise, if $r<0$, then we use the fact that $\aredp{rp+j}[D_{jq,rq}]$ must contain an element of grading
\[
d-D_{jq+1,rq}+D_{jq,rq}=d-2r>d
\]
to contradict the isomorphisms in \eqref{eq:q_geq_isos}. This proves condition \eqref{it:q_geq_3} of the lemma.
 
\end{proof}

Putting this all together we obtain the main theorem of this section.
\begin{thm}\label{thm:HFKbound}
\thmHFKbound{}
\end{thm}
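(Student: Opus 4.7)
The plan is to combine Proposition~\ref{prop:Vi_recovery} and Lemma~\ref{lem:ared_recovery} to match the relevant Heegaard Floer invariants of $K$ and $K'$, and then read off the three conclusions. Since $\nu^+(K)\leq g(K)$, the hypothesis $p\geq 12+4q^2-2q+4qg(K)$ implies the numerical requirement of Proposition~\ref{prop:Vi_recovery}, giving $\nu^+(K)=\nu^+(K')$ and $V_k(K)=V_k(K')$ for all $k\geq 0$. The same hypothesis, together with either (i) or (ii), is that of Lemma~\ref{lem:ared_recovery}, which yields isomorphisms $\ared{k}(K)\cong \ared{k}(K')$ of graded $\F[U]$-modules for all $k\geq 0$.

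For the genus I would perform a case analysis on whether $\nu^+(K)<g(K)$. If $\nu^+(K)<g(K)$, then Proposition~\ref{prop:ared_properties}\eqref{it:ared_g-1_calc} gives $\ared{g(K)-1}(K)\neq 0$, so the isomorphism above together with Proposition~\ref{prop:ared_properties}\eqref{it:ared=0} applied to $K'$ forces $g(K')\geq g(K)$. If strictly $g(K')>g(K)$, then $\nu^+(K')=\nu^+(K)<g(K)<g(K')$, and a second application of Proposition~\ref{prop:ared_properties}\eqref{it:ared_g-1_calc} to $K'$ would give $\ared{g(K')-1}(K)\neq 0$, contradicting Proposition~\ref{prop:ared_properties}\eqref{it:ared=0} for $K$. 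The case $\nu^+(K)=g(K)$ is symmetric via $\nu^+(K')=g(K)$, so in all cases $g(K)=g(K')$.

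For the Alexander polynomial and fibredness, I would exploit the short exact sequence
\[
0\longrightarrow C\{i<0,\,j\geq k\}\longrightarrow A_k^+\overset{v_k}{\longrightarrow} B^+\longrightarrow 0.
\]
In reduced form the subcomplex $C\{i<0,\,j\geq k\}$ is finite-dimensional with each bigrading $(i,j)$ contributing $\chi(\hfkhat(K,j-i))=a_{j-i}$ to the graded Euler characteristic, so for $k\geq 0$ one computes $\chi(C\{i<0,\,j\geq k\})=\sum_{s>k}(s-k)\,a_s=t_k(K)$, the $k$-th torsion coefficient of $\Delta_K$. Comparing graded dimensions Maslov-degree by Maslov-degree, the kernel of the surjection $\vbold_k\colon \Abold_k\to \Bbold$ has Euler characteristic $V_k+\chi(\ared{k})$: the $V_k$ accounts for the $V_k$ extra even-grading tower elements of $\AT_k\cong \tp_{(-2V_k)}$ which are not present in $\Bbold\cong \tp_{(0)}$. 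Equating yields the identity $\chi(\ared{k})=t_k(K)-V_k(K)$, so since both $V_k$ and $\ared{k}$ match across $K,K'$ the torsion coefficients $t_k$ also agree for all $k\geq 0$, and hence $\Delta_K=\Delta_{K'}$ via the inversion $a_{k+1}=t_k-2t_{k+1}+t_{k+2}$ combined with $\Delta(1)=1$. Specializing the same analysis at $k=g(K)-1$ and invoking the identification $H_*(C\{i<0,\,j\geq g(K)-1\})\cong \hfkhat_{*+2}(K,g(K))$ from the proof of Proposition~\ref{prop:ared_properties}\eqref{it:ared_g-1_calc} gives the further identity $\dim_{\F}\hfkhat(K,g(K))=V_{g(K)-1}(K)+\dim_{\F}\ared{g(K)-1}(K)$; since the right-hand side and $g(K)$ match, the Ghiggini--Ni fibredness detection theorem yields that $K$ is fibred if and only if $K'$ is fibred.

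The main obstacle is the Euler-characteristic bookkeeping in the third paragraph, which requires carefully tracking tower contributions in each Maslov grading; the key observation enabling this is that the finite-dimensional $C\{i<0,\,j\geq k\}$ realizes a finite sum of $\hfkhat$-terms, while the infinite tower difference $\tp_{(-2V_k)}$ versus $\tp_{(0)}$ contributes exactly $V_k$ to the Euler characteristic because the extra generators all live in even Maslov gradings.
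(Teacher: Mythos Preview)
Your proposal is correct and follows the same overall strategy as the paper: apply Proposition~\ref{prop:Vi_recovery} and Lemma~\ref{lem:ared_recovery} to match the $V_k$ and the $\ared{k}$, then extract genus, Alexander polynomial and fibredness from this data. The paper simply cites \cite[Proposition~2.3]{McCoy2020torus_char} for the three extraction formulas
\[
g(K)=\min\{k\geq 0 \mid V_k+\dim\ared{k}=0\},\qquad t_k(K)=V_k+\chi(\ared{k}),\qquad \text{fibred}\iff V_{g-1}+\dim\ared{g-1}=1,
\]
whereas you re-derive the last two directly from the short exact sequence for $v_k$ (your Euler-characteristic bookkeeping is exactly what underlies that cited proposition), and you replace the first by a direct case analysis using Proposition~\ref{prop:ared_properties}. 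Your genus argument in the case $\nu^+(K)=g(K)$ is a bit tersely phrased as ``symmetric'', but the intended reasoning (from $\nu^+(K')=g(K)\leq g(K')$, a strict inequality would force $\ared{g(K')-1}(K')\neq 0$ via Proposition~\ref{prop:ared_properties}\eqref{it:ared_g-1_calc}, contradicting $\ared{g(K')-1}(K)=0$) is sound. In short: same approach, with the citations unpacked.
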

\begin{proof}
By Lemma~\ref{lem:ared_recovery} and Proposition~\ref{prop:Vi_recovery}, we can conclude that $V_k(K)=V_k(K')$ and $\ared{k}(K)\cong\ared{k}(K')$ for all $k\geq 0$. The conclusion follows since one can recover the genus, Alexander polynomial and fibredness from this data \cite[Proposition~2.3]{McCoy2020torus_char}.
\begin{enumerate}
 \item The genus is detected as $g(K)=\min\{k\geq 0 \mid V_k+\dim \ared{k}=0\}$.
 \item A knot $K$ of genus $g$ is fibred if and only if $V_{g-1}+\dim\ared{g-1}=1$.
 \item The torsion coefficients of the Alexander polynomial $\Delta_K(t)$ satisfy the formula $t_k(K)= V_k + \chi(\ared{k})$ for all $k$ and the torsion coefficients $t_k$ for $k\geq 0$ are sufficient to determine $\Delta_K(t)$.
\end{enumerate}
\end{proof}

\subsection{Thickness one knots.}
In this section, we exhibit our principal examples of knots with Property~\spliff{}. Recall that a knot $K$ has (knot Floer) thickness at most $n$, if there exists an integer $\rho$ such that the knot Floer homology group $\hfkhat_d(K,s)$ is non-zero only in gradings $s+\rho-n\leq d \leq s+ \rho$. The thickness of $K$ is defined to be the minimal $n$ for which $K$ has thickness at most $n$. 
A knot $K$ is said to have thin knot Floer homology if it has thickness zero, that is, there is an integer $\rho$ such that the group $\hfkhat_d(K,s)$ can be non-zero only if $d=s+\rho$. The principal examples of this are alternating knots which are thin with $\rho = \frac{\sigma}{2}$, where $\sigma$ is the usual knot signature \cite{Ozsvath2003alternating}. Similarly quasi-alternating knots are also known to have thin knot Floer homology \cite{Manolescu2008Khovanov}. More generally we wish to understand when knots with thickness one can have Property~\spliff{}. This is is a class that includes all almost-alternating knots \cite{Lowrance2008width} and all prime knots with at most 12 crossings \cite{Knotinfo}.
 
We will use $\calT_{(d)}(n)$ to denote the submodule of $\tp_{(d)}$ generated by $U^{1-n}$. For $n\leq 0$, the module $\calT_{(d)}(n)$ will be trivial. For $n\geq 1$, the module $\calT_{(d)}(n)$ has dimension $n$ as an $\F$-vector space. For $n=1$, there is an isomorphism of graded $\F[U]$-modules $\F_{(d)}\cong \calT_{(d)}(1)$.
\begin{lem}\label{lem:thickness_one_structure}
Let $K$ be a knot with thickness at most one and let $\rho$ be an integer such that for all $s$, the group $\hfkhat_d(K,s)$ is non-zero only for gradings $d\in \{s+\rho,s+\rho-1\}$. Then for all $k\geq 0$, there exist integers $a,b\geq 0$ such $\ared{k}$ takes one of the following forms
\begin{enumerate}[(a)]
\item\label{it:ared_ceil_type} $\ared{k}\cong \F^{a}_{(k+\rho-1)}\oplus \F^{b}_{(k+\rho-2)}\oplus \calT_{(2k)}\left(\ceilfrac{\rho-k}{2}\right)$
\item\label{it:ared_floor_type} $\ared{k}\cong \F^{a}_{(k+\rho-1)}\oplus \F^{b}_{(k+\rho-2)}\oplus \calT_{(2k)}\left(\floorfrac{\rho-k}{2}\right)$
\end{enumerate}
Furthermore, if $\hfkhat_{\rho+k}(K,k)=0$, then $a=0$. If $\hfkhat_{\rho+k-1}(K,k)=0$, then we can assume that $\ared{k}$ is of type~\eqref{it:ared_ceil_type}.
\end{lem}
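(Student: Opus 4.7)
The plan is to use the Reduction Lemma to fix a model for $C=CFK^\infty(K)$ satisfying $C\{(i,j)\}\cong\hfkhat_{*-2i}(K,j-i)$. Under the thickness-one hypothesis, every generator at bigrading $(i,j)$ is then concentrated in one of the two homological gradings $i+j+\rho$ (the ``top band'') or $i+j+\rho-1$ (the ``bottom band''). Since we already know $\Abold_k\cong \AT_k\oplus \ared{k}$ with $\AT_k\cong \tp_{(-2V_k)}$, the task reduces to analysing the reduced part directly from this thickness-one model.

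I would construct $\ared{k}$ as the sum of two kinds of contribution. The first kind is a truncated tower arising from the vertical column $C\{i=0,\, j\geq k\}$: its top-band generators sit at homological gradings $j+\rho$ for $j\geq k$ and are linked by the $U$-action. Because $U\cdot C\{(0,k)\}\subseteq C\{(-1,k-1)\}$ lies outside $A_k^+$, the resulting chain of $U$-classes stalls at the bottom rung of grading $2k$. The infinite tail where $U$ acts freely makes up $\AT_k$, while the $n$ remaining top rungs survive in $\ared{k}$ as a copy of $\calT_{(2k)}(n)$. A parity count then shows that the topmost surviving rung sits at grading $k+\rho-1$ giving $n=\lceil(\rho-k)/2\rceil$ (type \eqref{it:ared_ceil_type}), unless a bottom-band generator at bigrading $(0,k)$ shortens the chain by one step so that its top rung is instead at grading $k+\rho-2$ and $n=\lfloor(\rho-k)/2\rfloor$ (type \eqref{it:ared_floor_type}). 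The final clause of the lemma falls out immediately: if $\hfkhat_{\rho+k-1}(K,k)=0$ then no such bottom-band generator exists at $(0,k)$, and we land in type \eqref{it:ared_ceil_type}.

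The second kind of contribution consists of the extra $\F$-summands. Since $U$ shifts bigrading by $(-1,-1)$, any surviving $U$-torsion class must be supported on the boundary $\{i=0\}\cup\{j=k\}$ of $A_k^+$; combined with the thickness-one structure, an orbit-by-orbit analysis of the possible length-one and length-two cancellations forces such classes to sit at homological gradings $k+\rho-1$ or $k+\rho-2$ only, yielding the $\F^{a}_{(k+\rho-1)}\oplus \F^{b}_{(k+\rho-2)}$ piece. Furthermore, the $\F_{(k+\rho-1)}$-summands are in natural bijection with a subset of the top-band generators at bigrading $(0,k)$, giving $a\leq \dim \hfkhat_{\rho+k}(K,k)$; in particular $a=0$ whenever $\hfkhat_{\rho+k}(K,k)=0$.

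\textbf{Main obstacle.} The principal difficulty is the second step: showing that every $\F$-factor of $\ared{k}$ sits in one of the two consecutive gradings $\{k+\rho-1,\, k+\rho-2\}$. This requires careful tracking of the top- and bottom-band generators near the corner $(0,k)$ of $A_k^+$, using both the length-one and length-two differentials permitted by the thickness-one hypothesis; one must verify that potential length-two cancellations do not strand $U$-torsion classes at other gradings. The hypothesis is just strong enough to rule this out, but the bookkeeping of which generators pair off versus survive is the delicate part of the proof.
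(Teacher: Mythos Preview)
Your proposal is a sketch rather than a proof, and the sketch has structural problems. The central issue is your source for the truncated tower: you locate it in the column $C\{i=0,\, j\geq k\}$, claiming its top-band generators are ``linked by the $U$-action''. But $U$ shifts bigrading by $(-1,-1)$, so it does not preserve this column; the column is neither a sub- nor a quotient complex of $A_k^+$ in any way compatible with the $U$-action, and homology classes do not in general have well-defined bigrading support. So the heuristic ``$U$ stalls at the bottom rung $(0,k)$'' does not translate into an argument at the level of $\ared{k}$. Likewise, your second step --- confining all $\F$-factors to the two gradings $k+\rho-1$, $k+\rho-2$ --- is openly left undone; you call it the main obstacle and describe it as delicate bookkeeping, but you never perform it. An ``orbit-by-orbit analysis of length-one and length-two cancellations'' is not an argument, and the thickness-one hypothesis does not by itself restrict the differential on $CFK^\infty$ to length at most two, so even the setup of that bookkeeping is unclear.

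The paper avoids all of this by replacing generator-tracking with two short exact sequences. The sequence
\[
0\to C\{i\leq -1,\; j\leq k-1\}\to C\to A_k^+\to 0
\]
has left-hand term supported in degrees $\leq k+\rho-2$, so $HF^\infty(S^3)\to\Abold_k$ is an isomorphism in degrees $\geq k+\rho$ and injective in degree $k+\rho-1$; hence $\ared{k}$ vanishes above $k+\rho-1$. Dually, the sequence
\[
0\to A_k^+\to C\{i\geq 0\}\oplus C\{j\geq k\}\to C\{i\geq 0,\; j\geq k\}\to 0
\]
has right-hand term supported in degrees $\geq k+\rho-1$, so $\Abold_k\to \tp_{(0)}\oplus\tp_{(2k)}$ is an isomorphism in degrees $\leq k+\rho-3$ and surjective in degree $k+\rho-2$. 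Peeling off the $\AT_k$ tower, this forces $\ared{k}$ to contain a submodule $\calT_{(2k)}(\lfloor(\rho-k)/2\rfloor)$ accounting for everything in degree $\leq k+\rho-3$; any remainder sits in the two degrees $k+\rho-1$, $k+\rho-2$, giving the $\F^a\oplus\F^b$ piece, with the ceiling case arising when a degree-$(k+\rho-1)$ element maps by $U$ onto the top of the submodule. The two refinements fall out by noting which chain groups vanish under the extra hypotheses. This squeeze argument gives the full structure with no case analysis of differentials, and is the idea your proposal is missing.
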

Note that types~\eqref{it:ared_ceil_type} and~\eqref{it:ared_floor_type} in Lemma~\ref{lem:thickness_one_structure} are not mutually exclusive as $\ceilfrac{\rho-k}{2}=\floorfrac{\rho-k}{2}$ whenever $\rho-k$ is even.
\begin{proof}
The proof is modelled on \cite[Proof of Theorem~1.4]{Ozsvath2003alternating}.
By \eqref{eq:reduced_complex}, we may assume that the chain complex $C=CFK^\infty$ is such that the group $C\{(i,j)\}$ is supported only in degrees $i+j+\rho$ and $i+j+\rho-1$.

We have an exact sequence of complexes
\[
0\rightarrow C\{i \leq -1\, \text{and}\, j\leq k-1 \}\rightarrow C \rightarrow A^+_k=C\{i\geq 0 \, \text{or}\, j\geq k \} \rightarrow 0.\]
Since the complex $C\{i \leq -1 \,\text{and}\, j\leq k-1 \}$ is non-zero only in degrees $\leq \rho+k-2$, we see that the induced map
\begin{equation}\label{eq:map2}
HF^\infty (S^3)=H_*(C)\rightarrow \Abold_k
\end{equation}
is an isomorphism in degrees $\geq \rho+k$ and an injection in degree $\rho+k -1$.  The image of $\eqref{eq:map2}$ is contained automatically in the tower of $\Abold_k$, so we see that $\ared{k}$ is non-zero only in degrees $\leq \rho+k -1$.

In order to understand the structure of $\ared{k}$ in degrees $\leq \rho+k-2$ we turn to the short exact sequence
\[
0\rightarrow A^+_k=C\{i\geq 0 \, \text{or}\, j\geq k \}\rightarrow C\{i\geq 0 \} \oplus C\{j\geq k \}\rightarrow C\{i\geq 0 \, \text{and}\, j\geq k \} \rightarrow 0.\]
Since the complex $C\{i\geq 0  \, \text{and}\, j\geq k \}$ is non-zero only in degrees $\geq k+\rho-1$, we see that the induced map
\begin{equation}\label{eq:map1}
\Abold_k \rightarrow H_*(C\{i\geq 0 \}) \oplus H_*(C\{j\geq k \})\cong \tp_{(0)} \oplus \tp_{(2k)}
\end{equation}
is an isomorphism in degrees $\leq k+\rho-3$ and a surjection in degree $k+\rho-2$. Since $V_k\geq 0$ and the tower of $\Abold_{k}$ is isomorphic $\tp_{(-2V_k)}$, we may assume that in \eqref{eq:map1} the tower of $\Abold_k$ surjects onto the $\tp_{(0)}$ summand in degrees $\leq k+\rho-2$. Thus we see that $\ared{k}$ surjects onto $\tp_{(2k)}$ in degrees $\leq k+\rho-2$. Thus, $\ared{k}$ is non-zero in all even degrees between $2k$ and $\rho+k-2$. Moreover since \eqref{eq:map1} is a morphism of $\F[U]$-modules, we see that $\ared{k}$ contains a submodule of the form $M=\calT_{(2k)}\left(\floorfrac{\rho-k}{2}\right)$, where $M$ will be trivial if $\rho+k-2<2k$. Moreover, the fact that \eqref{eq:map1} is an isomorphism in degrees $\leq k+\rho-3$ implies that $M$ contains all elements of $\ared{k}$ in degrees $<k+\rho -2$.

Putting this altogether we see that there are integers $a,b\geq 0$ such that the structure of $\ared{k}$ takes one of the desired forms:
\begin{enumerate}[(a)]
\item $\ared{k}\cong \F^{a}_{(k+\rho-1)}\oplus \F^{b}_{(k+\rho-2)}\oplus \calT_{(2k)}\left(\ceilfrac{\rho-k}{2}\right)$
\item $\ared{k}\cong \F^{a}_{(k+\rho-1)}\oplus \F^{b}_{(k+\rho-2)}\oplus \calT_{(2k)}\left(\floorfrac{\rho-k}{2}\right)$.
\end{enumerate}
Here type~(a) occurs since the submodule $M$ described above may not occur as an $\F[U]$-module summand of $\ared{k}$. This happens if there is an element of degree $k+\rho-1$ which is not killed by the $U$-action.

Now we refine these statements if $\hfkhat_{\rho+k}(K,k)=0$ or $\hfkhat_{\rho+k-1}(K,k)=0$.

 The chain group of $C\{i \leq -1 \,\text{and}\, j\leq k-1 \}$ in degree $\rho+k-2$ is isomorphic to $\hfkhat_{\rho+k}(K,k)$. So if $\hfkhat_{\rho+k}(K,k)=0$, then the map in \eqref{eq:map2} is actually an isomorphism in degrees $\geq \rho+k-1$. In particular, if $\hfkhat_{\rho+k}(K,k)=0$, then $\ared{k}$ is zero in degree $\rho+k -1$ as well implying $a=0$ in the given forms.

The chain group of $C\{i\geq 0 \, \text{and}\, j\geq k \}$ in degree $k+\rho-1$ is isomorphic to $\hfkhat_{\rho+k-1}(K,k)$. Therefore, if $\hfkhat_{\rho+k-1}(K,k)=0$, the map \eqref{eq:map1} is an isomorphism in degrees $\leq k+\rho-2$ and a surjection in degree $k+\rho-1$. Thus, we get a surjection of $\ared{k}$ onto $\tp_{(2k)}$ in all degrees $\leq k+\rho-1$. This means that $\ared{k}$ is supported in all even degrees between $2k$ and $\rho+k-1$ and that, in fact, the module $M$ from above is contained in a submodule of the form $\calT_{(2k)}\left(\ceilfrac{\rho-k}{2}\right)$. This implies that $\ared{k}$ is of the form $(a)$, as required.
\end{proof}

We will see that for knots with thickness one, we only need to consider the module $\ared{\rho-3}$ to verify whether the knot has Property~\spliff{}.
\begin{lem}\label{lem:thickness_one_characterization}
Let $K$ be a knot with thickness at most one and let $\rho$ be an integer such that for all $s$, the group $\hfkhat_d(K,s)$ is non-zero only for gradings $d\in \{s+\rho,s+\rho-1\}$. Then $K$ has Property~\spliff{} if and only if $\rho \leq 2$ or $\ared{\rho-3}$ has Property~\spliff{}.
\end{lem}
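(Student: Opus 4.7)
The plan is to invoke Lemma~\ref{lem:thickness_one_structure} to pin down the structure of every $\ared{k}$ with $k \geq 0$ and isolate the unique index at which Property~\spliff{} can fail. Since $K$ has Property~\spliff{} if and only if $\ared{k}$ does for every $k \geq 0$, the goal is to show that $\ared{k}$ automatically has Property~\spliff{} whenever $k \neq \rho - 3$, so that the only potentially problematic index $\rho - 3$ lies in the admissible range $k \geq 0$ precisely when $\rho \geq 3$.

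First, I would record the elementary fact that $\calT_{(2k)}(n)$ admits an $\F$-factor if and only if $n = 1$, in which case $\calT_{(2k)}(1) \cong \F_{(2k)}$: for $n \geq 2$ the kernel of $U$ is generated by the bottom element, which equals $U \cdot U^{-1}$ and therefore lies in the image of $U$, while for $n \leq 0$ the module is zero. Combined with the two possible decompositions of $\ared{k}$ supplied by Lemma~\ref{lem:thickness_one_structure}, this shows that the only gradings which can carry $\F$-factors of $\ared{k}$ are $k + \rho - 1$, $k + \rho - 2$, and possibly $2k$. Since $k + \rho - 1$ and $k + \rho - 2$ have opposite parity, Property~\spliff{} is threatened only if the $\calT$-summand contributes an $\F_{(2k)}$ sitting in a grading distinct from, yet of the same parity as, one of these two.

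The remaining task is a short numerical case check. The summand $\calT_{(2k)}\!\left(\ceilfrac{\rho - k}{2}\right)$ equals $\F_{(2k)}$ precisely when $\rho - k \in \{1, 2\}$, and $\calT_{(2k)}\!\left(\floorfrac{\rho - k}{2}\right)$ equals $\F_{(2k)}$ precisely when $\rho - k \in \{2, 3\}$; in all other cases the tower summand contributes no $\F$-factor at all. For $k = \rho - 1$ one has $2k = k + \rho - 1$, and for $k = \rho - 2$ one has $2k = k + \rho - 2$, so the $\F_{(2k)}$ merely reinforces an existing $\F$-grading and Property~\spliff{} holds automatically. The single remaining possibility is $k = \rho - 3$ in type~\eqref{it:ared_floor_type}, where $2k = 2\rho - 6$ is even and strictly less than the other even grading $k + \rho - 1 = 2\rho - 4$: this is precisely the index at which Property~\spliff{} can fail, yielding both directions of the lemma. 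The main conceptual care is in this parity bookkeeping; the rest of the argument is routine arithmetic.
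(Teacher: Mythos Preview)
Your proposal is correct and follows essentially the same approach as the paper's proof: both use Lemma~\ref{lem:thickness_one_structure} to reduce the question to when the tower summand $\calT_{(2k)}(n)$ degenerates to a single $\F_{(2k)}$, then perform the same small case analysis on $\lceil(\rho-k)/2\rceil=1$ and $\lfloor(\rho-k)/2\rfloor=1$ to isolate $k=\rho-3$ as the unique index where Property~\spliff{} can fail. Your write-up is slightly more explicit about the parity bookkeeping and the $\F$-factor criterion for $\calT_{(2k)}(n)$, but the argument is the same.
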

\begin{proof}
Suppose that $\ared{k}$ does not have Property~\spliff{} for some $k\geq 0$. Lemma~\ref{lem:thickness_one_structure} shows that for $k\geq 0$ the only way $\ared{k}$ can fail to have Property~\spliff{} is if the term $\calT_{(2k)}\left(\ceilfrac{\rho-k}{2}\right)$ or $\calT_{(2k)}\left(\floorfrac{\rho-k}{2}\right)$ is an $\F$-summand supported in degree $2k$ where $2k\neq k+\rho-1$ and $2k\neq k+\rho-2$.

Thus we have $\ceilfrac{\rho-k}{2}=1$ or $\floorfrac{\rho-k}{2}=1$ where $k\not\in\{\rho-1, \rho-2\}$.

However, if $\ceilfrac{\rho-k}{2}=1$, then $k=\rho-1$ or $k=\rho-2$. So the remaining possibility is that $\floorfrac{\rho-k}{2}=1$. This happens only if $k=\rho-2$ or $k=\rho-3$. In conclusion, $\ared{k}$ has Property~\spliff{} for all $k\geq 0$ with the possible exception of $k=\rho-3$. Thus we see that $K$ does not have Property~\spliff{} if and only if $\rho\geq 3$ and $\ared{\rho-3}$ does not have Property~\spliff{}.
\end{proof}
In practice, this allows us to find many examples of knots with Property~\spliff{} via the following proposition.
\propthicknessone*

\begin{proof}
We establish first the conditions for $K$ to have Property~\spliff{}. Lemma~\ref{lem:thickness_one_characterization} says immediately that $K$ has Property~\spliff{} if $\rho\leq 2$. Thus we may suppose $\rho\geq 3$.

If $\hfkhat_{2\rho-3}(K,\rho-3)=0$, then Lemma~\ref{lem:thickness_one_structure} shows that there is an integer $b\geq 0$ such that
\[\text{$\ared{\rho-3}\cong \F^{b}_{(2\rho-5)}\oplus \F_{(2\rho-6)}$ or $\ared{\rho-3}\cong \F^{b}_{(2\rho-5)}\oplus \calT_{(2\rho-6)}(2)$}.\]
In either event, $\ared{\rho-3}$ has Property~\spliff{} and so Lemma~\ref{lem:thickness_one_characterization} implies that $K$ also has Property~\spliff{}.

If $\hfkhat_{2\rho-4}(K,\rho-3)=0$, then Lemma~\ref{lem:thickness_one_structure} shows that there are integer s $a,b\geq 0$ such that
\[\ared{\rho-3}\cong \F^{a}_{(2\rho-4)}\oplus \F^{b}_{(2\rho-5)}\oplus \calT_{(2\rho-6)}(2).\]
Thus $\ared{\rho-3}$ and hence also $K$ have Property~\spliff{}.

Now we turn our attention to the conditions for $mK$ to have Property~\spliff{}. These follow immediately from the fact that there is an isomorphism \cite[Proposition~3.7]{Ozsvath2004knotinvariants}
\[
\hfkhat_{d}(mK,s)\cong \hfkhat_{-d}(K,-s).
\]
In particular, if $K$ has thickness one with knot Floer homology supported in degrees $s+\rho$ and $s+\rho-1$, then $mK$ also has thickness at most one supported only in degrees $s-\rho$ and $s-\rho+1$. Thus, the conditions for $mK$ are obtain by replacing $\rho$ with $1-\rho$ in the conditions for $K$.
\end{proof}

\section{Applications to characterizing slopes}\label{sec:applications}
We now put together the results of the preceding sections to prove the remaining main results.

\interestingclasses*
\begin{proof}For all of the three classes we will show that that both the knot $K$ and its mirror $mK$ have Property~\spliff{}.
\begin{enumerate}[(i)]
\item Suppose that $K$ has thin knot Floer homology. Since $mK$ also has thin knot Floer homology, it suffices to show that $K$ has Property~\spliff{}. However this follows easily from Proposition~\ref{prop:thickness_one}. Suppose that there is $\rho\in\Z$ such that for all $s$ the group $\hfkhat_d(K,s)$ is non-zero only if $d=s+\rho$. The group $\hfkhat(K,\rho-3)$ is supported only in grading $d=2\rho-3$ implying that $\hfkhat_{2\rho-4}(K,\rho-3)=0$.

\item Suppose that $K$ is an $L$-space knot. Since $\ared{k}(K)=0$ for all $k$, we see that $K$ has Property~\spliff{}. It remains to show that $mK$ has Property~\spliff{}. Fix some integer $n\geq 2g(K)-1$. As calculated in Lemma~\ref{lem:redHFp_small_g}, we have that $\ared{k}(mK)$ is isomorphic to (an appropriately shifted) $\hfred(S_{mK}^3(n),k)$ for $0\leq k<g(K)$. If $mK$ does not have Property~\spliff{}, then the exact triangle relating $\hfhat$ and $\hfp$ would imply that $\dim \hfhat_{\rm red}(S_{mK}^3(n),\spincs)>2$ for some \spinc-structure $\spincs$. However, if $\spincs$ is a \spinc-structure on $S^3_K(-n)$, then  Lemma~18 and Proposition~19 in \cite{Gainullin2017mapping} show that
\[\hfred(S^3_K(-n),\spincs)\cong \calT(m_\spincs),\]
for some integer $m_\spincs\geq 0$. In particular, the exact triangle relating $\hfhat$ and $\hfp$ shows that $\dim \hfhat_{\rm red}(S^3_K(-n),\spincs)\leq 2$.  However the dimension of $\hfhat$ is invariant under orientation-reversal \cite[Proposition~2.5]{Ozsvath2004applications}, so the fact that $S_{mK}^3(n)\cong -S_{mK}^3(-n)$ implies that $\dim \hfhat_{\rm red}(S_{mK}^3(n),\spincs)\leq 2$ for all \spinc-structures. Thus we have shown that $mK$ also has Property~\spliff{}. 

\item For each prime knot $K$ with at most 12 crossings KnotInfo contains $\hfkhat(K)$ encoded in the form of its Poincaré polynomial \cite{Knotinfo}. This allows us to see for which $s$ and $d$ the groups $\hfkhat_d(K,s)$ are non-trivial. From this data one can easily determine that all the non-alternating prime knots with at most 12 crossings have knot Floer homology of thickness at most one. As discussed above, the knots with thin knot Floer homology satisfy Conjecture~\ref{conj:main}. For the knots with knot Floer homology of thickness one, one can calculate the value of $\rho$ occurring in Proposition~\ref{prop:thickness_one} confirm whether both conditions (i) and (ii) of Proposition~\ref{prop:thickness_one} are satisfied. This is sufficient to show that $K$ and $mK$ have Property~\spliff{} for all prime knots with at most 12 crossings except those listed in Table~\ref{table:awkward_examples}.
\end{enumerate}
\end{proof}

\subsection{Integral slopes on almost $L$-space knots}
We now turn our attention to characterizing slopes for almost $L$-space knots.

\begin{lem}\label{lem:almost_properties}
Let $K$ be an almost $L$-space knot. Then
\begin{enumerate}[(i)]
\item $K$ has Property~\spliff{} and
\item either $g(K)=\nu^+(K)$ or $g(K)= 1$.
\end{enumerate}
\end{lem}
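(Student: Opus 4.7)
The plan is to translate the defining dimension condition of an almost $L$-space knot into a sharp structural constraint on the modules $\ared{k}(K)$, and then to read off both conclusions directly.

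First I would fix an integer $n\geq 2g(K)-1$ and set $Y=S_K^3(n)$. The standard rank formula for a rational homology 3-sphere, which follows from the long exact sequence $\hfhat(Y,\spincs)\to\hfp(Y,\spincs)\xrightarrow{U}\hfp(Y,\spincs)$ split over $\spincs$-structures (using $\hfp\cong\tp\oplus\hfred$ and the fact that $U$ is surjective on each tower with $1$-dimensional kernel, while on $\hfred$ each cyclic summand contributes $1$ to both $\ker U$ and $\mathrm{coker}\, U$), reads
\[
\dim_{\F}\hfhat(Y)=|H_1(Y;\Z)|+2\,\ell(\hfred(Y)),
\]
where $\ell(\hfred(Y))$ denotes the total number of cyclic $\F[U]$-summands of $\hfred(Y)$. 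The almost $L$-space condition $\dim\hfhat(Y)=n+2$ then forces $\ell(\hfred(Y))=1$.

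Next I would invoke Lemma~\ref{lem:redHFp_small_g} with $q=1$ together with the symmetry $\ared{k}\cong\ared{-k}$ to rewrite the summand count as
\[
\ell(\ared{0}(K))+2\sum_{k=1}^{g(K)-1}\ell(\ared{k}(K))=1.
\]
A parity argument immediately forces $\ared{k}(K)=0$ for every $k\neq 0$, and $\ared{0}(K)$ to be a single cyclic module, necessarily of the form $\calT_{(d)}(m)$ for some $d$ and some $m\geq 1$.

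Statement (i) is then automatic: direct inspection shows that a cyclic $\F[U]$-module $\calT(m)$ has exactly one $\F$-factor when $m=1$ and none when $m\geq 2$, so $\ared{0}(K)$ trivially satisfies Property~\spliff{}, and since every other $\ared{k}(K)$ vanishes, $K$ has Property~\spliff{}. For statement (ii) I would argue by contradiction: if $g(K)\geq 2$ and $\nu^+(K)<g(K)$, then Proposition~\ref{prop:ared_properties}(ii) would produce a non-trivial $\ared{g(K)-1}(K)$ at the non-zero index $g(K)-1$, contradicting the vanishing just established; hence $g(K)=1$ or $\nu^+(K)=g(K)$. The only mildly delicate point is setting up the rank formula in terms of summand counts rather than raw dimension (the two coincide only when $\hfred$ is a direct sum of $\F$'s), but once this is in place the remainder of the argument is bookkeeping.
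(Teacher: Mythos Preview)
Your argument is correct and follows essentially the same route as the paper. The one cosmetic difference is that you derive the structure of $\hfred(Y)$ directly from the rank formula $\dim_{\F}\hfhat(Y)=|H_1(Y)|+2\ell(\hfred(Y))$, whereas the paper cites \cite[Proposition~3.1]{Binns2019almostLspace} for the statement that an almost $L$-space has $\hfred(Y,\spincs_0)\cong\calT(m)$ in a unique $\spincs$-structure; after that, both proofs use the mapping cone description of $\hfred(S_K^3(n))$ together with the symmetry $\ared{k}\cong\ared{-k}$ to force $\ared{k}=0$ for $k\neq 0$ and $\ared{0}\cong\calT(m)$, and then invoke Proposition~\ref{prop:ared_properties}\eqref{it:ared_g-1_calc} for part~(ii).
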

\begin{proof}
If a rational homology sphere $Y$ is an almost $L$-space, then there is a unique \spinc-structure $\spincs_0$ such that $\hfred(Y,\spincs_0)$ is non-zero and that there is an integer $m\geq 1$ such that $\hfred(Y,\spincs_0)\cong \calT(m)$ \cite[Proposition~3.1]{Binns2019almostLspace}. Proposition~\ref{prop:redHFp} shows that for all $n\geq 2g(K)-1$ and all $k\in\Z$, the group $\ared{k}$ occurs as a summand in $\hfred(S_K^3(n), i)$ for some $i$. On the other hand, if $\ared{k}$ is non-zero for some $k>0$, then $\ared{-k}$ is also non zero \cite[Lemma~2.3]{Hom2015reducible}. Therefore if $K$ is an almost $L$-space knot, then $\ared{0}\cong \calT(m)$ for some $m\geq 1$ and $\ared{k}=0$ for all $k> 0$. In particular, an almost $L$-space knot has Property~\spliff{}.
Furthermore, Proposition~\ref{prop:ared_properties} implies that if $\nu^+(K)<g(K)$, then $\ared{g(K)-1}$ is non-zero. However for an almost $L$-space knot, $\ared{k}$ is non-zero only for $k=0$. It follows that $g(K)=\nu^+(K)$ or $g(K)= 1$.
\end{proof}
We establish characterizing slopes for $T_{2,3}\# T_{2,3}$ first. This plays a special role in the analysis as the unique non-prime almost $L$-space knot \cite{Binns2019almostLspace}.
\begin{prop}\label{prop:trefoil_sum}
Any integer slope $n\geq 32$ is characterizing for $T_{2,3}\# T_{2,3}$.
\end{prop}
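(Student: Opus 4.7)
The goal is to show that if $S^3_{K'}(n)\cong S^3_K(n)$ with $K=T_{2,3}\#T_{2,3}$ and $n\geq 32$, then $K'\simeq K$. Our main tool is the mapping cone formula combined with the classification of almost $L$-space knots. Note that $K$ is an almost $L$-space knot of genus $2$ with $\nu^+(K)=2$, and by Lemma~\ref{lem:almost_properties} we have $\ared{k}(K)=0$ for $k\neq 0$ while $\ared{0}(K)\cong\calT(m_0)$ for some $m_0\geq 1$.

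First I would show $K'$ is also an almost $L$-space knot. Since $\dim\hfhat(S^3_K(n))=n+2$, the same holds for $S^3_{K'}(n)$, so by definition $K'$ is an almost $L$-space knot; Lemma~\ref{lem:almost_properties} then provides the corresponding structural constraints on the knot Floer complex of $K'$. Because $n\geq 32\geq 4\nu^+(K)+14$, Proposition~\ref{prop:Vi_recovery} gives $V_k(K)=V_k(K')$ for all $k\geq 0$, whence $\nu^+(K')=\nu^+(K)=2$, and the dichotomy from Lemma~\ref{lem:almost_properties} forces $g(K')=2$.

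Next I would extract the structure of $\aredp{0}(K')$ by running the mapping cone formula from both sides. By Lemma~\ref{lem:labellingbound} the labellings $\phi_{K,n}$ and $\phi_{K',n}$ coincide, so $\hfred(Y,i)$ is unambiguously defined, where $Y\cong S^3_K(n)\cong S^3_{K'}(n)$. Applying Lemma~\ref{lem:redHFp_small_g} with $K$ gives $\hfred(Y,0)\cong \ared{0}(K)$ and $\hfred(Y,i)=0$ for $1\leq i\leq n-1$. Applying Proposition~\ref{prop:redHFp} with $K'$ yields
\[
\hfred(Y,i)\cong \bigoplus_{s\in\Z}\aredp{ns+i}(K')[D_{i,s}].
\]
The vanishing at every nonzero $i$ forces $\aredp{k}(K')=0$ whenever $k\not\equiv 0\pmod n$, and since $g(K')=2<n$ only the $s=0$ term survives at $i=0$, giving $\aredp{0}(K')\cong \ared{0}(K)$ as graded $\F[U]$-modules. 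Invoking the identities from the proof of Theorem~\ref{thm:HFKbound} (namely $g(K')=\min\{k\geq 0\mid V_k+\dim\aredp{k}=0\}$, fibredness detected by $V_{g-1}+\dim\aredp{g-1}=1$, and the torsion coefficient formula for $\Delta_{K'}$) then shows that $K'$ is a fibred genus $2$ almost $L$-space knot with $\Delta_{K'}=\Delta_K=(t-1+t^{-1})^2$.

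Finally I would identify $K'$ using Binns's classification \cite{Binns2019almostLspace}: the unique composite almost $L$-space knot in $S^3$ is $T_{2,3}\#T_{2,3}$, and a direct inspection of his list of prime almost $L$-space knots rules out any prime alternative of genus $2$ carrying the Alexander polynomial $(t-1+t^{-1})^2$. Hence $K'$ must be composite and equal to $K$. The main obstacle is this last identification step, since the knot Floer data alone does not a priori determine a knot; the argument hinges on feeding the recovered invariants into the recent classification of almost $L$-space knots. The explicit constant $n\geq 32$ is just what is needed to simultaneously satisfy the hypotheses of Proposition~\ref{prop:Vi_recovery}, Lemma~\ref{lem:labellingbound}, and Lemma~\ref{lem:redHFp_small_g}.
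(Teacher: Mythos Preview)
Your final identification step has a genuine gap. You write that ``a direct inspection of [Binns's] list of prime almost $L$-space knots rules out any prime alternative'', but no such list exists: the classification in \cite{Binns2019almostLspace} operates at the level of the knot Floer chain complex $CFK^\infty$, not at the level of isotopy classes of knots. Knowing that $K'$ is a fibred genus-$2$ almost $L$-space knot with $\Delta_{K'}(t)=(t-1+t^{-1})^2$ and with $\aredp{0}\cong\ared{0}(K)$ does not, with current technology, pin down $K'$ among prime knots. Your middle paragraph recovering these invariants is correct but ultimately does no work, since there is nothing to feed them into.

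The paper avoids this problem by a geometric argument rather than a Floer-theoretic one. The manifold $S^3_K(n)$ is a graph manifold: the composing space in the JSJ decomposition of $S^3_K$ becomes $T^2\times[0,1]$ after integer filling, so $S^3_K(n)$ is a union of two trefoil exteriors, and every exceptional fibre has order $2$ or $3$. One then rules out the prime possibilities for $K'$ case by case. If $K'$ were of hyperbolic type, Proposition~\ref{prop:length_bound} with $g(K')=2$ gives $\ell_{K'}(n)\geq n/(3\sqrt{3})>6$, so the $6$-theorem forces a hyperbolic JSJ piece in $S^3_{K'}(n)$, contradicting the graph-manifold structure. If $K'$ were a torus knot or a cable, Propositions~\ref{prop:JSJ_surgery_torus} and~\ref{prop:JSJ_surgery_cable} produce an exceptional fibre of order at least $n-4g(K')-2\geq 22$, again impossible. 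Hence $K'$ is composite, and only now does Binns's result that $T_{2,3}\#T_{2,3}$ is the unique composite almost $L$-space knot finish the proof.

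There is also a minor ordering issue: you conclude that $K'$ is an almost $L$-space knot ``by definition'' from $\dim\hfhat(S^3_{K'}(n))=n+2$, but the definition requires $n\geq 2g(K')-1$, which is not yet established. The paper first obtains $\nu^+(K')=2$ from Proposition~\ref{prop:Vi_recovery} and then uses that $n>2\nu^+(K')-1$ suffices for the almost $L$-space conclusion.
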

\begin{proof}
Let $K=T_{2,3}\# T_{2,3}$ and suppose that $K'$ is a knot such that $S_{K'}^3(n)\cong S_{K}^3(n)$ for some $n\geq 32$. Proposition~\ref{prop:Vi_recovery} applies to show that  $\nu^+(K')=\nu^+(K)=2$. Furthermore, since $S_{K'}^3(n)$ is an almost $L$-space and $n>2\nu^+(K')-1$, this implies that $K'$ is also an almost $L$-space knot. In particular, $g(K')=g(K)=2$.

The JSJ decomposition of $S^3_K$ comprises two trefoil complements glued to a composing space. Since a composing space becomes a copy of $T^2\times [0,1]$ under integer Dehn filling, the manifold $S_{K}^3(n)$ is obtained by gluing together two trefoil complements. Thus the JSJ decomposition of $S_{K}^3(n)\cong S_{K'}^3(n)$ contains only Seifert fibred pieces with all exceptional fibres being of orders two or three.

Next we consider the four possibilities for $K'$ allowed by Theorem~\ref{thm:knot_JSJ_decomp}: $K'$ is either of hyperbolic type, a torus knot, a cable knot or a composite knot.

If $K'$ were of hyperbolic type, then Proposition~\ref{prop:length_bound} would show that
\[
\ell_{K'}(n)\geq \frac{n}{\sqrt{3}(2g(K')-1)}\geq \frac{32}{3\sqrt{3}}>6.
\]
This would imply, via the 6-theorem, that the outermost piece of $S_{K'}^3$ would remain hyperbolic after the Dehn filling, contradicting the fact that the JSJ decomposition of $S_{K'}^3(n)$ contains only Seifert fibred pieces.

If $K'$ were a cable knot or a torus knot, then Proposition~\ref{prop:JSJ_surgery_torus} or Proposition~\ref{prop:JSJ_surgery_cable} would show that the JSJ decomposition of $S_{K'}(n)$ would contain a Seifert fibred piece with an exceptional fibre of order at least 
\[n-4g(K')-2=n-10\geq 22,\]
contradicting the fact that the maximal order of an exceptional fibre is three.

Thus the only remaining possibility is that $K'$ is a composite knot. However $T_{2,3}\# T_{2,3}$ is the unique non-prime almost $L$-space knot \cite{Binns2019almostLspace}, so this implies that $K\simeq K'$ and hence that $n$ is a characterizing slope.
\end{proof}

With this case in hand, we establish characterizing slopes for almost $L$-space knots in general.

\almostLspaceknot*
\begin{proof}
Since $L$-space knots and almost $L$-space knots have Property~\spliff{}, Theorem~\ref{thm:spaff_slopes} shows that we only need to consider integer slopes. Thus let $K$ be a non-trivial $L$-space knot or an almost $L$-space knot and suppose that some $n\geq 4g(K)+14$ is a non-characterizing slope for $K$. Thus we may take $K'$ be a knot distinct from $K$ such that $S_{K}^3(n)\cong S_{K'}^3(n)$. Proposition~\ref{prop:Vi_recovery} applies to show that $\nu^+(K)=\nu^+(K')$. Since $S_{K}^3(n)$ is an $L$-space or an almost L-space, it follows that $K'$ must also be an $L$-space knot or an almost $L$-space knot. By Lemma~\ref{lem:almost_properties}(ii), an almost $L$-space knot satisfies $\nu^+(K)=g(K)\geq 1$ unless $\nu^+(K)=0$ and $g(K)=1$. For an $L$-space knot we always have $\nu^+(K)=g(K)$.  In either case, the equality $\nu^+(K)=\nu^+(K')$ implies that $g(K)=g(K')$. Furthermore, given Proposition~\ref{prop:trefoil_sum} and the fact that $T_{3,2}\# T_{3,2}$ is the unique composite knot which is an $L$-space knot or an almost $L$-space knot \cite{Binns2019almostLspace, Krcatovich2015reduced}, we may assume that both $K$ and $K'$ are prime. Therefore Theorem~\ref{thm:bounded_genus} applies to yield an upper bound for $n$.
\end{proof}

\bibliographystyle{alpha}
\bibliography{master}

\end{document}